\newtheorem{theorem}{Theorem}
\newtheorem*{theorem*}{Theorem}
\newtheorem{corollary}[theorem]{Corollary}
\newtheorem{proposition}[theorem]{Proposition}
\newtheorem{lemma}[theorem]{Lemma}
\newtheorem{definition}[theorem]{Definition}
\newtheorem{example}[theorem]{Example}
\newtheorem{remark}[theorem]{Remark}
\newcommand{\fS}{\mathfrak{S}}
\DeclareMathOperator{\Ad}{Ad}
\DeclareMathOperator{\lcm}{lcm}
\DeclareMathOperator{\Tr}{tr}
\DeclareMathOperator{\N}{N}
\DeclareMathOperator{\cov}{covol}
\DeclareMathOperator{\vol}{vol}
\DeclareMathOperator{\opp}{op}
\DeclareMathOperator{\Gal}{Gal}
\DeclareMathOperator{\ord}{ord}
\begin{document}
\title{Lattice packings through division algebras}
\author{Nihar Prakash Gargava%
\thanks{Electronic address: \texttt{nihar.gargava [AT] epfl.ch}}}
\affil{Chair of Number Theory,\\ Department of Mathematics,\\ École Polytechnique Fédérale de Lausanne}

\maketitle

\abstract{
  In this text, we will show the existence of lattice packings in a family of dimensions by employing division algebras. This construction is a generalization of Venkatesh's lattice packing result \cite{AV}. In our construction, we replace the appearance of the cyclotomic number field with a division algebra over the rational field.  We employ a probabilistic argument to show the existence of lattices in certain dimensions with good packing densities.
  The approach improves the best known lower bounds on the lattice packing problem for certain dimensions. 

We work with a moduli space of lattices that are invariant under the action of a finite group, one that can be embedded inside a division algebra. To obtain our existence result, we prove a division algebra variant of the Siegel's mean value theorem. In order to establish this, we describe a useful description of the Haar measure on our moduli space and a coarse fundamental domain to perform the integration.
}

\ 

\tableofcontents
\section*{Introduction}
Let $V$ be a real vector space of $d$ dimensions with a given inner product $\langle \ , \ \rangle$. When we say $\Lambda$ is a lattice in $V$, we mean that $\Lambda$ is a discrete closed subgroup $\Lambda \subseteq V$ such that $V / \Lambda$ has a finite volume from the induced measure. The volume of $V / \Lambda$ is also called the covolume of $\Lambda$.

Given a lattice $\Lambda$, take a real number $r > 0$ and consider the collection of open balls $\{ B_{r}\left( v \right)\}_{v \in \Lambda}$. Such a collection of balls are said to be a sphere packing in $V$ if no non-trivial pairs of these balls intersect. That is, for any $v_1, v_2 \in \Lambda$, $B_{r}(v_1) \cap B_{r}(v_2) \neq \emptyset \Rightarrow v_1 = v_2$.
Such an arrangement is called a lattice sphere packing, or simply lattice packing.

We have a notion of the packing efficiency of a lattice packing defined as 
\begin{align}
   \lim_{R \rightarrow \infty} \frac{\mu\left( B_{R}(0) \cap \left(  \bigsqcup_{v \in \Lambda} B_{r}(v) \right) \right)}{\mu\left( B_{R}(0) \right)} = 
 \frac{\mu( B_{r}(0) )}{ \mu(V / \Lambda)}.
\end{align}
where $\mu$ is the Lebesgue measure on $V$ induced by the inner product.
This is always a real number in the open interval $(0,1]$.

We define the $d$-dimensional sphere packing constant as
\begin{align}
  c_{d} = \sup\left\{ \mu\left( g B_{r}(0)\right) \ | \ r> 0,~g \in SL(V) \text{ and } g B_{r}(0) \cap \Lambda_0 = \{ 0\}\right\},
\end{align}
where $SL(V)$ is the group of unimodular linear transformation on $V$ and $\Lambda_0$ is any unit covolume lattice in $V$. 
It then follows that the tightest possible lattice packing in $V$ has a packing density equal to $2^{-d} c_d$. Indeed, if we double the radius of the origin-centered ball in the packing, then it can contain no non-zero centers of the other balls and translating the centers of the balls to $\Lambda_0$ with an appropriate $g \in SL(V)$ shows what is required.

The exact value of $c_d$ is known only for $d \in \{ 1,2,3,4,5,6,7,8,24\}$ (\cite{CS}, \cite{CK2009}). Bounds exist for other values.
There are several known results that establish lower bounds on $c_d$ for various class of dimensions. Some of the celebrated results are compiled in Table \ref{tab:asymptots}.

\begin{table}[htbp]
  \centering
  \begin{tabular}{|c|c|c|}
    \hline
    Lower bound on packing density & Contributed by  & Dimensions covered \\
    \hline 
    $c_{d} \ge 1$ & Minkowski (cf. \cite{h43}) & Any $d \ge 1$ \\
    $c_{d} \ge 2(d-1)$ & Ball \cite{ba92} & Any $d \ge 1$ \\
    $c_{4n} \ge 8.8 n$ & Vance \cite{va2011}  &  $d = 4n, n \ge 1$ \\
    $c_{2 \varphi(n)} \ge n$ & Venkatesh \cite{AV} & $d= 2\varphi(k)$, for some $k \ge 1$ \\ 
    \hline 
  \end{tabular}
  \caption{Available lower bounds in large dimensions}
  \label{tab:asymptots}
\end{table}

The last result due to Venkatesh is the best known lower bound asymptotically. Note that suppose that we write $n=  2 \varphi(n) ( \tfrac{n}{2 \varphi(n)} )$. Now from Mertens' theorem, we know that $\tfrac{n}{2 \varphi(n)}$ can be as big as $O(\log \log n) = O(\log \log d)$. This happens for the subsequence of dimensions $d = 2\varphi(k)$ where $k=p_1 p_2 \dots p_k$ where $\{ p_1, p_2,\dots\}$ are prime numbers indexed increasingly. Hence, along a sequence of dimensions, the lower bound due to Venkatesh is better than any linear bound.

Theorem \ref{pr:boundbetter} is the main result in this text, restated below in a convenient form.

\begin{theorem*}
  Let $D$ be a finite-dimensional division algebra over $\mathbb{Q}$. Let $\mathcal{O} \subseteq D$ be an order (see Definition \ref{de:order}) and $G_{0} \subseteq \mathcal{O}$ be a finite group embedded in the multiplicative group of $D$. Then if $d=2\dim_{\mathbb{Q}}D$, then
  \begin{align}
    c_{d} \ge \# G_{0}.
  \end{align}
\end{theorem*}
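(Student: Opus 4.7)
The plan is to follow Venkatesh's framework \cite{AV}, replacing the cyclotomic field with the division algebra $D$ and its order $\mathcal{O}$. Set $M = \mathcal{O}^{\oplus 2}$, viewed as a rank-two $\mathcal{O}$-module, and let $V = M \otimes_{\mathbb{Z}} \mathbb{R}$, a real vector space of dimension $2 \dim_{\mathbb{Q}} D = d$. The group $G_{0} \subseteq \mathcal{O}^{\times}$ acts on $V$ by diagonal left multiplication; averaging any inner product over $G_{0}$ yields a positive-definite $G_{0}$-invariant inner product. Every right-$(D \otimes \mathbb{R})$-linear automorphism $g$ of $V$ of reduced norm one produces a unit-covolume lattice $\Lambda_{g} := M \cdot g \subset V$, which is automatically $G_{0}$-stable because left multiplication by $G_{0}$ commutes with right multiplication by $g$. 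When $D = \mathbb{Q}(\zeta_{n})$ and $G_{0} = \mu_{n}$, this reduces to the cyclotomic setup.

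Let $H$ denote the real Lie group of these automorphisms, a real form of $SL_{2}$ over $D$, and $\Gamma = SL_{2}(\mathcal{O})$ the discrete subgroup stabilizing $M$. The moduli space $X = \Gamma \backslash H$ carries a Haar-induced probability measure $\mu_{X}$, finite by reduction theory for orders in central simple algebras. The technical heart of the argument is a division-algebra Siegel mean value formula
\begin{align}
\int_{X} \sum_{v \in \Lambda_{g} \setminus \{0\}} f(v) \, d\mu_{X}(g) = \int_{V} f(x) \, dx
\end{align}
valid for compactly supported measurable $f : V \to \mathbb{R}$. My plan for proving this is to unfold the $X$-integral against a coarse fundamental domain for $\Gamma$ in $H$, built from an Iwasawa-type decomposition and the reduction theory for $\mathcal{O}$, together with the observation that $M \setminus \{0\}$ splits into finitely many $\Gamma$-orbits indexed by ideal classes of $\mathcal{O}$. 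I expect the explicit construction of this coarse fundamental domain and the Jacobian relating Haar measure on $H$ to Lebesgue measure on $V$ to be the principal obstacle, since the non-commutativity of $\mathcal{O}$ complicates the classical reduction theory that suffices in the cyclotomic case.

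Granted the mean value formula, a short averaging argument concludes the theorem. The $G_{0}$-action on $M \setminus \{0\}$ is free: if $g \in G_{0}$ fixes $0 \neq v \in M$, then some coordinate $v_{i} \in \mathcal{O} \subseteq D$ is nonzero and the identity $(g - 1) v_{i} = 0$ in the division algebra $D$ forces $g = 1$. Consequently, for any $G_{0}$-invariant subset $B \subseteq V$, the counting function $N_{B}(\Lambda) := \#\{v \in \Lambda \setminus \{0\} : v \in B\}$ is a non-negative integer multiple of $\#G_{0}$. Applying the mean value formula with $f$ the indicator of a Euclidean ball $B$ of volume $\vol(B) < \#G_{0}$ gives $\int_{X} N_{B} \, d\mu_{X} = \vol(B) < \#G_{0}$, so some $\Lambda_{g_{0}}$ must satisfy $N_{B}(\Lambda_{g_{0}}) = 0$, i.e., $B \cap \Lambda_{g_{0}} = \{0\}$. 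Letting $\vol(B) \to \#G_{0}$ yields $c_{d} \geq \#G_{0}$.
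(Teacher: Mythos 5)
Your proposal is correct and takes essentially the same route as the paper (Proposition \ref{pr:bound} followed by Theorem \ref{pr:boundbetter}): average an inner product over $G_{0}$, invoke the division-algebra Siegel mean value theorem (Theorem \ref{th:siegel}) with $k=2$ against the indicator of a ball of volume just below $\#G_{0}$, and use freeness of the $G_{0}$-orbits on nonzero lattice points (which transports from $M$ to $\Lambda_{g} = Mg$ since the $G_{0}$-action commutes with $g$). The one small difference is the end-game: you let $\vol(B) \to \#G_{0}$ and appeal to the supremum defining $c_{d}$, whereas the paper additionally applies Mahler compactness to exhibit an actual lattice attaining packing efficiency $2^{-d}\#G_{0}$, a slightly stronger conclusion than the stated inequality requires.
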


Since a number field is also a division algebra over $\mathbb{Q}$, we recover the result of Venkatesh by setting $D = \mathbb{Q}(\mu_n)$, the $n$th cyclotomic field, $\mathcal{O}$ to be the ring of integers in $\mathbb{Q}(\mu_{n})$ and the $n$th cyclotomic field and $G_0 = \langle \mu_{n}\rangle$. Hence, Venkatesh's construction can be recovered from this theorem.

Figure \ref{fig:newresult} directly compares the previously existing set of lower bounds with newer results obtained. Note that although most of the points in the plot are from Venkatesh's result, the bounds obtained from division algebras are slightly better wherever they apply.

To get packing bounds from finite subgroups embedded in division algebras, we exploit Amitsur's classification results from \cite{Amit55} which contains a description of every possible type of finite group $G_0$ that can be used for obtaining lower bounds. The reader can find a summary of this classification result in Theorem \ref{th:amit_div} from Section \ref{se:improv}. 

One of the two infinite families of groups mentioned in the classification leads to the following sequence of dimensions mentioned in Theorem \ref{th:improv}.

\begin{theorem*}
  There exists a sequence of dimensions $\{ d_i\}_{i=1}^{\infty}$ such that for some $C>0$, we have $c_{d_{i}} > C d_{i} (\log \log d_{i})^{\frac{7}{24}}$
  and the lattices that achieve this bound in each dimension are symmetric under the linear action of a non-commutative finite group.
\end{theorem*}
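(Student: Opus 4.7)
The previous theorem reduces matters to producing, for infinitely many $n$, a division algebra $D_n$ over $\mathbb{Q}$ with $\dim_\mathbb{Q} D_n = n$ together with a finite non-commutative subgroup $G_n \subseteq D_n^\times$ satisfying $\#G_n \gg n (\log \log n)^{7/24}$; the dimensions $d_i = 2n_i$ will then witness the claimed bound, and the lattices produced by the main theorem will carry the desired $G_n$-symmetry. So the task is purely to exhibit the correct family of pairs $(G_n, D_n)$.

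My plan is to invoke Amitsur's classification (Theorem \ref{th:amit_div}) of finite subgroups of division rings over $\mathbb{Q}$. The commutative infinite family in that list recovers Venkatesh's choice $G_0 = \langle \mu_k \rangle \subseteq \mathbb{Q}(\mu_k)$, which produces only one power of $\log \log d$ via Mertens' theorem. I would instead select the non-commutative infinite family singled out after Theorem \ref{th:amit_div}: a family of metacyclic-type groups $G_{m,r}$ (with auxiliary parameters $n,t$) whose embeddability in a division algebra is governed by arithmetic conditions on $(m,r)$. For each such $G_{m,r}$, I would identify the minimal division algebra $D$ containing it, compute $\dim_\mathbb{Q} D$ in terms of the parameters, and express the ratio $\#G_{m,r}/(2\dim_\mathbb{Q} D)$ as a closed-form number-theoretic expression. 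This reduces the geometric problem to an extremal problem in elementary analytic number theory.

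The final step is analytic: choose a subsequence of parameters so that this ratio grows at least like $(\log\log d)^{7/24}$. As in Venkatesh's argument, the factor $m/\varphi(m)$ can be inflated to $\Theta(\log\log d)$ by letting $m$ range over primorials; the non-commutative structure contributes a further factor coming from the multiplicative order of $r \pmod m$ and the associated Schur index. The exponent $7/24$ should emerge from the optimum balance between enlarging $m/\varphi(m)$ and enlarging this secondary factor, under the constraint that the resulting group actually embeds in a division algebra. The principal obstacle I foresee is precisely this joint optimization: Amitsur's embeddability conditions link the prime factorizations of $m$, $n$ and $r$ restrictively, so the Mertens-style sieve producing $\log\log d$ must be performed while simultaneously respecting these divisibility constraints. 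Extracting the precise exponent $7/24$ from this constrained extremal problem — presumably through a careful prime-by-prime analysis coupled with a smooth-number argument and the elementary bounds $m/\varphi(m) \le e^\gamma \log\log m + O(1)$ — is the technical heart of the argument, while the lattice-packing content is already supplied by the main theorem.
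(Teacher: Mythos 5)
Your high-level framework is right — reduce to choosing $(G_0,D)$ via Amitsur's list, then optimize $\#G_0/\dim_{\mathbb{Q}} D$ along a Mertens-type sequence — but you misidentify both the family used and the source of the exponent $7/24$, and this matters.

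First, the family. You propose using the $G_{m,r}$ row of the classification with $r \ne 1$, hoping that $\ord_m r$ supplies a new multiplicative gain. But from the table, $\#G_{m,r} = m\ord_m r$ while $\dim_{\mathbb{Q}} D = \varphi(m)\ord_m r$, so the packing ratio is $\#G_{m,r}/(2\dim_{\mathbb{Q}} D) = m/(2\varphi(m))$ — the factor $\ord_m r$ \emph{cancels}, and you are back to Venkatesh's $m/\varphi(m)$ with no secondary quantity to trade off. The family the paper actually uses is $\mathfrak{T}^{*}\times G_{m,1}$: here $\#G_0 = 24m$, $\dim_{\mathbb{Q}}D = 4\varphi(m)$, giving ratio $3m/\varphi(m)$ — a constant factor $6$ better — and the non-commutativity comes from the binary tetrahedral group $\mathfrak{T}^{*}$, not from the metacyclic structure (indeed $r=1$ makes $G_{m,1}$ cyclic). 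So there is no ``joint optimization'' of two competing factors; once the family is fixed, one simply maximizes $m/\varphi(m)$.

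Second, and more importantly, the exponent $7/24$ does not arise from any constrained extremum. The $\mathfrak{T}^{*}\times G_{m,r}$ row carries the hypotheses that $m$ is odd and $\ord_m 2$ is odd. For $m$ squarefree this means $m$ may only use primes $p$ for which $\ord_p 2$ is odd. The paper then takes $m$ to be the product of all such primes up to $x$ and appeals to a theorem of Hasse (Theorem \ref{th:hasse}): the set of primes $p$ with $\ord_p 2$ odd has natural density exactly $7/24$. Running Mertens' argument over only this thin set of primes produces $m/\varphi(m) \asymp (\log x)^{7/24}$ instead of $\log x$, and since $\log\log d \sim \log x$, this yields $c_d \gg d(\log\log d)^{7/24}$. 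Without Hasse's density theorem there is no way to get the exponent; it is an arithmetic input, not the solution of an extremal problem, and your proposal never identifies or invokes it. This is the genuine gap.
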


\begin{figure}[!h]
  \centering
\input{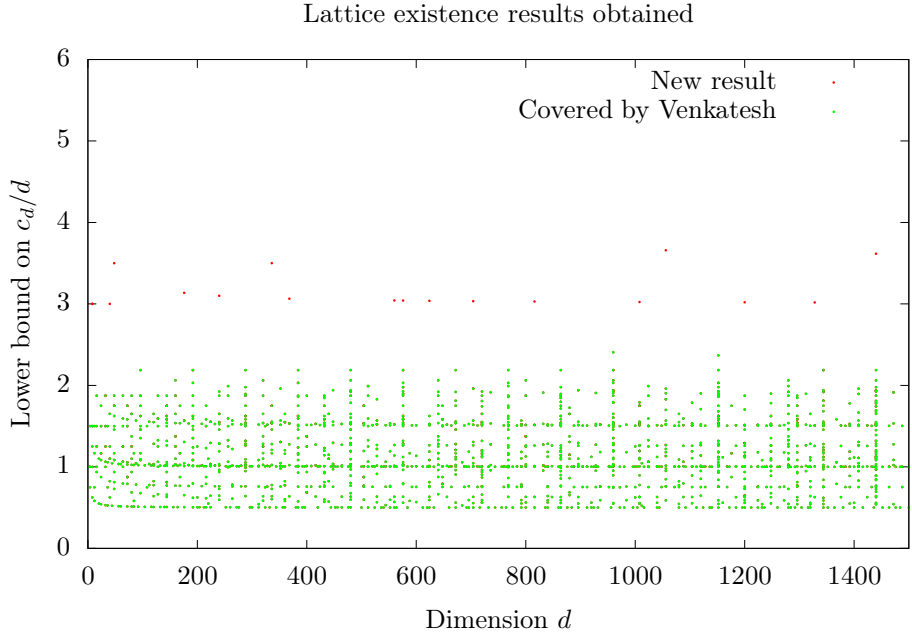}
\caption{The points shown in this figure are the points $(2\dim_{\mathbb{Q}} D, \tfrac{|G_0|}{2 \dim_{\mathbb{Q}}D})$ as $D$ and $G_0$ respectively varies across division algebras and finite groups mentioned in Theorem \ref{th:amit_div}. When the division algebra $D$ is a cyclotomic field over $\mathbb{Q}$, this corresponds to Venkatesh's result.}
    \label{fig:newresult}
\end{figure}

\begin{figure}[!h]
  \centering
\input{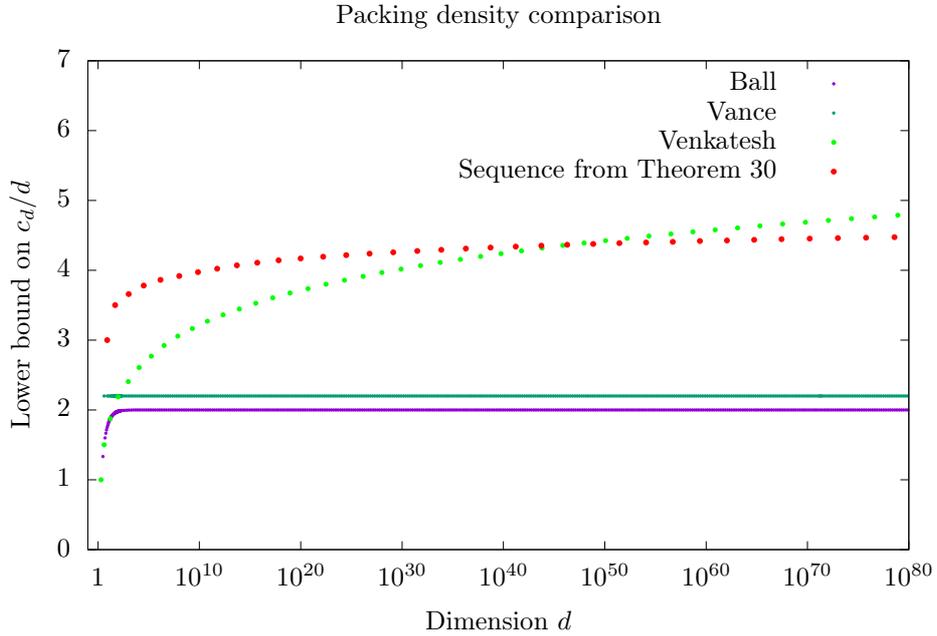}
\caption{The sequence of Venkatesh is better after $d \sim 1.98 \times 10^{46}$ than the sequence obtained from Theorem \ref{th:improv} and outperforms any linear bound on $c_d$ since it grows at $O(d (\log \log d)^{\tfrac{7}{24}})$.}
    \label{fig:graph}
\end{figure}

The significance of this result is that it yields improvements on lower bounds on $c_d$ in a collection of dimensions $d$ for $d \le 1.98 \times 10^{46}$ (see Figure \ref{fig:graph}). Nonetheless, because of the $7/24$ in the exponent of $\log \log d$, the asymptotic growth does not keep up with Venkatesh's growth of $O(d \log \log d)$. This $7/24$ appears because of the density of primes modulo which multiplicative order of 2 is odd. This restriction is imposed because Amitsur's classification result. See Section \ref{se:improv} for this interesting discussion.

The main technique to achieve the lower bounds here is to establish a division algebra variant of Siegel's mean value theorem \cite{Sie45}. This is the same probabilistic technique that makes the result of \cite{AV} possible. The key motivation of this theorem is to average a lattice-sum function on a collection of lattices that have some prescribed symmetries. It comes up as Theorem \ref{th:siegel} in our text.
\begin{theorem*}

  Let $D$ be a $\mathbb{Q}$-division algebra containing an order $\mathcal{O} \subseteq D$. Let $D_{\mathbb{R}} = D \otimes_{\mathbb{Q}} \mathbb{R}$ and $G = SL_k(D_{\mathbb{R}})$ and $\Gamma = SL_k(\mathcal{O})$, for some $k \ge 2$. Let $dg$ be the probability measure on $G/\Gamma$ that is left-invariant under $G$ action. Then for any $f \in C_{c} (D_{\mathbb{R}}^{k})$, we obtain that 
  \begin{align}
    \int_{G / \Gamma}\left( \sum_{v \in g \mathcal{O}^{k} \setminus \{ 0\}} f(v) \right) dg =  \int_{D_{\mathbb{R}}^{k}}^{} f(x) dx,
  \end{align}
  where $dx$ is a Lebesgue measure on $D_{\mathbb{R}}^{k}$ with respect to which $\mathcal{O}^{k}$ has a covolume of $1$.
\end{theorem*}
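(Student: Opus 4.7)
The plan is to adapt Siegel's classical unfolding argument to the division algebra setting, using the explicit description of the Haar measure on $G/\Gamma$ and the coarse fundamental domain developed earlier in the paper. The starting observation is that the left-hand side defines a positive linear functional
\[
I(f) = \int_{G/\Gamma} \sum_{v \in g\mathcal{O}^{k} \setminus \{0\}} f(v) \, dg
\]
on $C_{c}(D_{\mathbb{R}}^{k})$, which by the left-invariance of $dg$ under $G$ is a $G$-invariant positive Radon measure on $D_{\mathbb{R}}^{k} \setminus \{0\}$; for $h \in G$ the substitution $g \mapsto hg$ immediately gives $I(f \circ h) = I(f)$.

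I would then unfold $I$ by decomposing $\mathcal{O}^{k} \setminus \{0\} = \bigsqcup_{\alpha} \Gamma \cdot v_{\alpha}$ into $\Gamma$-orbits, yielding
\[
I(f) = \sum_{\alpha} \int_{G / \Gamma_{\alpha}} f(g v_{\alpha}) \, dg, \qquad \Gamma_{\alpha} = \mathrm{Stab}_{\Gamma}(v_{\alpha}).
\]
Letting $H_{\alpha} = \mathrm{Stab}_{G}(v_{\alpha}) \supset \Gamma_{\alpha}$, Weil's quotient integration formula with compatible Haar measures rewrites each summand as $\vol(H_{\alpha}/\Gamma_{\alpha})$ times the integral of $f$ against the essentially unique $G$-invariant measure on the orbit $G \cdot v_{\alpha} \subseteq D_{\mathbb{R}}^{k}$. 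To identify the total with Lebesgue measure, I would exploit that for $k \ge 2$ the group $G = SL_{k}(D_{\mathbb{R}})$ acts with an open dense orbit on $D_{\mathbb{R}}^{k}$, namely the locus where the projection to each simple factor of $D_{\mathbb{R}}$ is nonzero, on which the corresponding factor of $G$ acts transitively. The uniqueness of $G$-invariant Radon measures on this homogeneous space forces $I$ to coincide with a constant multiple $c$ of Lebesgue measure on the generic stratum; since the complement has Lebesgue measure zero, the theorem reduces to proving $c = 1$ and that no extra mass concentrates on lower-dimensional strata. The constant can then be pinned down by evaluating both sides on one convenient test function, say a smoothed indicator of a large ball, and invoking the normalization $\vol(D_{\mathbb{R}}^{k}/\mathcal{O}^{k}) = 1$.

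The hard part will be the volume bookkeeping: tracking $\vol(H_{\alpha}/\Gamma_{\alpha})$ as $\alpha$ ranges over $\Gamma$-orbits brings in arithmetic invariants of $\mathcal{O}$, such as the class group and Steinitz class data for $\mathcal{O}^{k}$, and one must verify that the sum over orbits reassembles Lebesgue measure with exactly the claimed normalization. Equivalently, one must rule out any extra $G$-invariant mass sitting on the non-generic strata of $D_{\mathbb{R}}^{k}$, which a priori could occur because those strata carry genuine lower-dimensional $G$-orbits. These are precisely the subtleties addressed by the explicit Haar measure description on $G/\Gamma$ and the coarse fundamental domain for $\Gamma \subset G$ developed in the preceding sections, and I would reduce the final computation to those inputs.
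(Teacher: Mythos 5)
Your unfolding step is essentially the same as the first half of the paper's proof (Lemma~\ref{le:epsind}): decompose $\mathcal{O}^{k}\setminus\{0\}$ into $\Gamma$-orbits, unwind each $\int_{G/\Gamma}\sum_{y\in\Gamma v}f(gy)\,dg$ to $\int_{G/\Gamma_{v}}f(gv)\,dg$, and then further to a product of a volume $\vol(G_{v}/\Gamma_{v})$ against an integral over the orbit $Gv$. Since every $v\in\mathcal{O}^{k}\setminus\{0\}$ lies in the same open dense $G$-orbit (whose complement has Lebesgue measure zero), this gives $\int_{G/\Gamma}\Phi_{f}\,dg = C\int f\,dx$ for some constant $C$. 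Up to here you are on the paper's track, though your description of the open orbit as ``projection to each simple factor is nonzero'' is slightly off: if $D_{\mathbb{R}}$ has a factor $M_{n}(F)$ with $n>1$, the condition is full rank in that factor, not mere nonvanishing. This is harmless for the argument since the complement still has measure zero.

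Where you diverge, and where I think you have misjudged the difficulty, is in pinning down $C=1$. You propose to ``track $\vol(H_{\alpha}/\Gamma_{\alpha})$ as $\alpha$ ranges over $\Gamma$-orbits,'' invoking class group and Steinitz class data of $\mathcal{O}^{k}$. The paper entirely avoids this: it never evaluates $C$ from the orbit side. Instead it observes that the unfolding only needs to deliver that $C$ is \emph{finite and independent of} $f$, which then gives $\int_{G/\Gamma}\Phi_{f}\,dg = \int_{G/\Gamma}\varepsilon^{dk}\Phi_{f_{\varepsilon}}\,dg$ for every $\varepsilon$. Letting $\varepsilon\to 0$, the pointwise limit of $\varepsilon^{dk}\Phi_{f_{\varepsilon}}(g\Gamma)$ is the constant $\int f\,dx$ for every unit-covolume $g\mathcal{O}^{k}$; since $\vol(G/\Gamma)=1$, the limit of the right-hand side is $\int f\,dx$, and $C=1$ falls out with no arithmetic bookkeeping whatsoever. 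Your remark about ``evaluating both sides on a smoothed indicator of a large ball'' is heading in the same direction (it is the $R\to\infty$ version of the paper's $\varepsilon\to 0$), but you present it as an afterthought rather than as the mechanism that replaces the volume computation.

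The genuinely hard part, which your proposal only gestures at, is justifying the interchange of limit and integral. You need a dominating function for $\varepsilon^{dk}\Phi_{f_{\varepsilon}}$ (or for the lattice-count functional at scale $R$) that is integrable over $G/\Gamma$ uniformly in the scale parameter. This is exactly Lemma~\ref{le:dominate}, and it is nontrivial: it requires the Siegel-domain description $\fS^{*}=\bigcup_{i}\fS^{1}b_{i}^{-1}$ of a coarse fundamental domain, the Iwasawa-type Haar measure formula on $G$ (Corollary~\ref{co:haar}), and a careful estimate of the number of points of $a'\mathcal{O}^{k}$ in a ball as $a'$ ranges over the non-compact directions $A^{\mathbb{R}}_{c_{1}}$, followed by a convergence check that the exponents of the $y_{j}$-variables are all strictly positive. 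Without this uniform integrability estimate, both the absolute convergence of the unfolded sum and the final passage to the limit are unjustified. So: right overall architecture, correct first half, but the labour is misallocated --- you should drop the orbit-volume arithmetic entirely and put the effort into the dominated convergence lemma on the Siegel domain.
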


In order to establish this mean value theorem, most of the effort is directed towards finding a suitable ``coarse'' fundamental domain of $G/\Gamma$ for integrating the left-hand side, which is done in Section \ref{se:reduction_theory}. The treatment of fundamental domains here follows that of Weil \cite{W58}, in which Weil covers the case of constructing arithmetic subgroups using real semisimple algebras with an involution and generalizes the construction of Siegel domains to the case when that algebra is the tensor product of a $\mathbb{Q}$-division algebra with $\mathbb{R}$. The work was eventually vastly generalized by Borel and Harish-Chandra in \cite{BHC62} to create a much more general theory of Siegel domains, but we will use the following more elementary approach of Weil. 

After establishing the ``coarse'' fundamental domain of $G/\Gamma$, the proof the mean value theorem can be found in Section \ref{se:SMT}.

In Section \ref{se:improv}, we also give some some sequences of dimensions in which we can achieve the same $O(d \log \log d)$ asymptotic growth using non-commutative finite groups. This is mentioned in Proposition \ref{pr:asymptotic}.

Apart from the given sequences, Figure \ref{fig:newresult} suggests that there may be lots of (possibly infinitely many) improvements on lower bounds on $c_d$ for individual dimensions $d$ that can be shown using the given methodology. It remains a questions of finding good ways to systematically generate such dimensions.

\section{Matrices over division algebras}

The goal of this section is to guide the reader towards the division algebra version of Siegel's mean value theorem. To get an overview of the theory of matrices over real semisimple algebra, see Appendix \ref{se:mat_over_real_ss_algebra}.

\subsection{Reduction theory of matrices over division algebras}\label{se:reduction_theory}

For a positive definite symmetric quadratic form $q: \mathbb{R}^{n} \rightarrow \mathbb{R}$, what is the set $\{ q(x)\}_{x \in \mathbb{Z}^{n} \setminus \{ 0\}}$? There is an enormous amount of literature and decades of mathematical research around this question.
But one important step before proceeding anywhere is to realize $g \in GL_n(\mathbb{Z})$, $\{ q(x)\}_{x \in \mathbb{Z}^{n} \setminus \{ 0\}} = \{ q( g(x) )\}_{x \in \mathbb{Z}^{n} \setminus \{ 0\}}$. Hence $q$ and $q \circ g$ are essentially the same quadratic forms as far as their values on integral points are concerned.

Reduction theory of quadratic forms generally refers to attempts at finding some suitable representative of a quadratic form modulo this equivalence. In this section, we will generalize the classical Minkowski-Siegel reduction theory of quadratic forms to the case of the types of quadratic forms we have talked about so far. To do so, we will first reframe the notion of ``integral points'' accordingly.

\begin{definition}\label{de:order}
  Let $A_{\mathbb{Q}}$ be a semisimple $\mathbb{Q}$-algebra. Then an additive subgroup $\mathcal{O} \subseteq A_{\mathbb{Q}}$ is called an order of $A$ if
  \begin{itemize}
    \item It is a finitely generated $\mathbb{Z}$-module.
    \item $\mathbb{Q}{\otimes_\mathbb{Z}} \mathcal{O}  = A_{\mathbb{Q}}$.
    \item It is closed under multiplication, that is $a , b \in \mathcal{O} \Rightarrow ab \in \mathcal{O}$. 
    \item $1_{A} \in \mathcal{O}$.
  \end{itemize}
\end{definition}

\begin{example}
  $\mathbb{Z} \subset \mathbb{Q}$ is an order. In general, for any number field $K$, the ring of integers $\mathcal{O}_{K}$ is an order.
\end{example}

When $\mathcal{O} \subseteq A_{\mathbb{Q}}$ is an order, $M_k(\mathcal{O})$ is an order within $M_{k}(A_{\mathbb{Q}})$. Moreover, $\mathcal{O} \subset A_{\mathbb{R}} = A_{\mathbb{Q}} \otimes \mathbb{R}$ is a lattice in the Euclidean topology. We will often refer to $\mathcal{O}$ as the ``integral points of $A$'' and as elements of $M_k(\mathcal{O})$ as ``integral matrices'' in $M_k(A)$.

\begin{remark}\label{re:integral}
  This notion of ``integral matrices'' can be reconciled with common sense in the following way. 
  Since $\mathcal{O}$ spans $A_{\mathbb{Q}}$, we can make a $\mathbb{Q}$-basis of $A_{\mathbb{Q}}$ from elements of $\mathcal{O}$. Extending this basis to a basis of $A_{\mathbb{Q}}^{k}$, we can recognize the algebra $M_{k}(A_{\mathbb{Q}})$ as an algebra of real matrices acting on $A^{k}_{\mathbb{Q}}$. Under this identification, the elements of $M_k(\mathcal{O})$ are exactly those elements of $M_{k}(A)$ whose entries as rational matrices are integers.

  Making this more precise, denote $d = \dim_{\mathbb{Q}} A_{\mathbb{Q}}$. Then there exists a faithful $\mathbb{R}$-algebra morphism $\pi:M_k(A_{\mathbb{R}}) \rightarrow M_{kd}(\mathbb{R})$ that maps $M_k(\mathcal{O})$ inside $M_{kd}(\mathbb{Z})$. In fact we see that, $M_k(\mathcal{O}) = \pi^{-1}(M_{kd}(\mathbb{Z}))$, because if $\pi(m) \in M_{kd}(\mathbb{Z})$, $m e_{i} \in \mathcal{O}^{k}$, when $e_{i} =(0,\dots,0,1_{A},0,\dots,0)\in \mathcal{O}^{k}$. 
\end{remark}

From now on, we will restrict our setting to the following. Instead of talking about a general semisimple $\mathbb{R}$-algebra $A$, we will talk of when $A$ is of the form\footnotemark $D_{\mathbb{R}} = D \otimes_{\mathbb{Q}} \mathbb{R}$ for some $\mathbb{Q}$-division algebra $D$. We will now also fix an order $\mathcal{O} \subseteq D \subseteq D_{\mathbb{R}}$ and this will be the ``integral points'' of $D_{\mathbb{R}}$. We will fix on $D_{\mathbb{R}}$ a positive involution $(\ )^{*}: D_{\mathbb{R}} \rightarrow D_{\mathbb{R}}$ (see Definition \ref{de:pos_inv}, Appendix \ref{se:mat_over_real_ss_algebra}).

\footnotetext{Why is $D_{\mathbb{R}}$ semisimple? The trace form $(a,b)\mapsto \Tr(ab)$ is clearly non-degenerate on $D$. It is classically known that the trace form on a finite-dimensional $k$-algebra is non-degenerate if and only if it is absolutely semisimple, i.e. $A \otimes_{k} L$ is semisimple for any field extension $L$ of $k$. }

The following theorem is a generalization of the classical Minkowski-Siegel reduction theorem, and is mentioned by the same name in \cite{W58}.
\begin{theorem}\label{th:MS}
  For the setting $\mathcal{O}\subseteq D \subseteq D_{\mathbb{R}}$ above, there exist constants $C_1,C_2, C_3 > 0$ and a relatively compact set $\omega_0 \subseteq D_{\mathbb{R}}$ depending only on $\mathcal{O}, D_{\mathbb{R}}$ and $k$ such that whenever there exists a positive-definite symmetric element $a \in M_{k}(D_{\mathbb{R}})$, there exists an $m \in M_k(\mathcal{O})$ such that the following conditions are met.
  \begin{enumerate}
    \item $|\N(m)|< C_3$\label{cond:norm}
    \item The Cholesky decomposition (see Theorem \ref{th:cholesky}) of $ m^{*} a m  = t^{*} d t$ satisfies
      \begin{enumerate}
	\item $ \frac{ d_{ij} } { \Tr(d_{ij}) } $ lies in $\omega_0$.\label{cond:a}
	\item $\Tr(d_{ii}) \le C_{1} \Tr(d_{(i+1)(i+1)}) $.\label{cond:b}
	\item $\Tr(t_{ij}^{*} t_{ij}) \le C_{2}$.\label{cond:c}
      \end{enumerate}
  \end{enumerate}
\end{theorem}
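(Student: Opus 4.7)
The plan is to induct on $k$, adapting the classical proof of Minkowski--Siegel reduction by replacing $\mathbb{Z}$-bases of $\mathbb{Z}^{n}$ with bounded-norm elements of $M_k(\mathcal{O})$. First, the positive-definite symmetric $a$ equips $D_{\mathbb{R}}^{k}$ with a real inner product $\langle x, y \rangle_{a} = \Tr_{D_{\mathbb{R}}/\mathbb{R}}(y^{*} a x)$, under which $\mathcal{O}^{k}$ is a real lattice whose covolume is determined by $\N(a)$ up to a constant depending only on $\mathcal{O}$ and $k$. Minkowski's first theorem applied to this lattice produces a nonzero $v_1 \in \mathcal{O}^{k}$ whose $a$-length is at most a constant times $\N(a)^{1/(kd)}$, where $d = \dim_{\mathbb{Q}} D$; this $v_1$ is to become the first column of $m$.

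For the induction step, I would extend $v_1$ to some $m_1 \in M_{k}(\mathcal{O})$ with $|\N(m_1)|$ bounded by a constant depending only on $\mathcal{O}$ and $k$. The first step of the Cholesky decomposition then gives $d_{11} = v_{1}^{*} a v_1$ together with a Schur complement which is a positive-definite form on $D_{\mathbb{R}}^{k-1}$; applying the induction hypothesis to this smaller form yields $m_2 \in M_{k-1}(\mathcal{O})$ reducing it. Combining $m_1$ with the block-diagonal matrix $\mathrm{diag}(1, m_2)$, followed by an integral upper-triangular shear (adding $\mathcal{O}$-linear combinations of earlier columns to later ones) to push each $t_{ij}$ into a fixed fundamental domain of $\mathcal{O} \subset D_{\mathbb{R}}$, produces the desired $m \in M_{k}(\mathcal{O})$. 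Verification of the four conditions is then largely mechanical: condition (\ref{cond:c}) follows from the shearing step, since a fundamental domain for $\mathcal{O}$ in $D_{\mathbb{R}}$ is relatively compact; condition (\ref{cond:a}) is essentially automatic because positive symmetric elements of $D_{\mathbb{R}}$ of trace $1$ form a compact subset of $D_{\mathbb{R}}$; condition (\ref{cond:b}) follows from the shortness of $v_i$ at each inductive step, so that $\Tr(d_{ii})$ is comparable to the $i$-th successive minimum and the ratio to $\Tr(d_{(i+1)(i+1)})$ is bounded above; and the norm bound (\ref{cond:norm}) is a product of the bounds accumulated over the induction.

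The main obstacle is the completion step: unlike the $\mathbb{Z}$-case, a primitive $v_1 \in \mathcal{O}^{k}$ need not extend to an $\mathcal{O}$-basis, because rank-$k$ projective $\mathcal{O}$-modules need not be free. The remedy is to drop insistence on $m_1 \in GL_{k}(\mathcal{O})$ and allow $|\N(m_1)|$ merely to be bounded, appealing to the finiteness of the class number of $\mathcal{O}$ (equivalently, the finiteness of the set of isomorphism classes of sublattices of $\mathcal{O}^{k}$ of bounded index). Weil's treatment in \cite{W58}, which this theorem's statement follows verbatim, supplies precisely this substitute and threads it uniformly through the induction to produce $C_1, C_2, C_3$ and the compact set $\omega_0$ that depend only on $\mathcal{O}, D_{\mathbb{R}}, k$.
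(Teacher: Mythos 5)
Your outline follows Weil's argument closely (Minkowski's theorem on short vectors, induction via the Schur complement, integral shearing, finiteness of class number to substitute for the failure of projective $\mathcal{O}$-modules to be free), and since the paper's own proof is just a citation of \cite[Theorem~2]{W58}, this is indeed the intended route.

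However, there is a genuine gap in your treatment of condition~(\ref{cond:a}). You dismiss it as ``essentially automatic because positive symmetric elements of $D_{\mathbb{R}}$ of trace $1$ form a compact subset of $D_{\mathbb{R}}$.'' First, that set is only \emph{relatively} compact, not compact: its closure contains singular positive semidefinite elements. Second and more importantly, the real content of condition~(\ref{cond:a}) --- made explicit in Remark~\ref{re:normOmegaoaway} and used crucially in Lemma~\ref{le:surjects_from_fund_domain} --- is that $\omega_0$ can be chosen relatively compact inside $D_{\mathbb{R}}^{*}$, i.e.\ that $\N_{D_{\mathbb{R}}}$ is bounded away from $0$ on $\omega_0$. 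If one only asked for relative compactness in $D_{\mathbb{R}}$, condition~(\ref{cond:a}) would be satisfied trivially by taking $\omega_0$ to be the whole trace-$1$ slice, and the statement would lose its force. This is precisely where the arithmetic enters: for a nonzero $v\in\mathcal{O}$ one has $\N_{D/\mathbb{Q}}(v)\in\mathbb{Z}\setminus\{0\}$, hence $|\N_{D_{\mathbb{R}}}(v)|\ge 1$, and the division-algebra hypothesis ensures this for every nonzero $v$, not just primitive ones. Combined with the Minkowski upper bound $\Tr(v^{*}av)\le C\,\det\nolimits_{\mathbb{R}}(a)^{1/(kd)}$ and the norm--trace inequality (Lemma~\ref{le:normtrace}), this yields a lower bound
\begin{align}
\N\!\left(\frac{d_{11}}{\Tr(d_{11})}\right)=\frac{|\N(v_1)|^{2}\,\N_{\mathrm{red}}(a)}{\Tr(v_1^{*}av_1)^{d}}\ \ge\ \text{const}>0,
\end{align}
(stated here in the $k=1$ base case; the general case propagates through the Schur-complement induction), and this is the quantitative heart of the reduction. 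Your sketch never invokes the integrality of $\N$ on $\mathcal{O}\setminus\{0\}$, so it cannot establish this bound, and without it the set $\omega_0$ you produce need not be bounded away from the singular locus.
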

\begin{proof}

  See \cite[Theorem 2]{W58}.

\end{proof}
\begin{remark}\label{re:normOmegaoaway}
  The set $\omega_{0}$ can be assumed to be inside $\{ d \in D_{\mathbb{R}} \ | \ \Tr(d) = 1\} $. This is because $\Tr(d_{ii}/\Tr(d_{ii})) =1$.
Furthermore, $\omega_{0}$ can be chosen to be relatively compact inside $D_{\mathbb{R}}^{*}$, the invertible elements of $D_{\mathbb{R}}$. In particular, this means that $\{N_{D_{\mathbb{R}}}(x)\}_{x \in \omega_0}$ is bounded away from $0$. 


\end{remark}
We can reformulate the above using the definition of a Siegel domain. Given a relatively compact set 
$\omega_{0} \subseteq D_{\mathbb{R}}$ and two constants $C_1, C_2 > 0$, then we define a Siegel domain 
\begin{align}\label{eq:siegel_definition}
  \mathfrak{S} = \mathfrak{S}_{\omega_{0} , C_1 ,C_2} = & \{ a \in M_k(D_{\mathbb{R}})  \ | \ a \text{ is symmetric positive definite } \\  & \text{ whose Cholesky decomposition $a = t^{*} d t$ satisfies } \\ &  \text{ conditions (a), (b) and (c) of Theorem~\ref{th:MS}}  \}.
\end{align}

In this context, what Theorem~\ref{th:MS} tells us is that there exists a Siegel domain $\mathfrak{S}$ such that, for any positive-definite symmetric $a \in M_k(D_{\mathbb{R}})$ an integral matrix $m$ of bounded norm can make $m^{*} a m \in \mathfrak{S}$. 

However, we can do a small correction to replace $m$ with $m'b$, where $m'$ is such that $\N(m')=1$ and $b$ is among finitely many candidates in $M_k(\mathcal{O})$. 
This will be used in Lemma \ref{le:surjects_from_fund_domain}, for example.

\begin{lemma}\label{le:normonecorrection}
  Given a constant $C>1$, we can find finitely many elements $b_1, b_2, b_3 ,\dots , b_m \in M_k(D)$ such that any $b \in M_k(\mathcal{O})$ with $1 \le | \N(b)|  \le C$ can be written as $b = b' b_i$ for some $i$, with $\N(b') = 1$.A
\end{lemma}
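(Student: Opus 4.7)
Set $S_C := \{ b \in M_k(\mathcal{O}) \ | \ 1 \le |\N(b)| \le C \}$. My plan is to show that $S_C$ breaks into finitely many orbits under the left action of $GL_k(\mathcal{O})$, and then to exploit the fact that every element of $GL_k(\mathcal{O})$ has absolute norm $1$ via the embedding $\pi$ of Remark~\ref{re:integral}. A choice of one representative per orbit will provide the desired $b_1,\ldots,b_m$.

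First, I would attach to each $b \in S_C$ the left $\mathcal{O}$-submodule
\[
L_b := \mathcal{O}^k b \subseteq \mathcal{O}^k
\]
generated by the rows of $b$. Since $|\N(b)| \ge 1$ the matrix $b$ is invertible in $M_k(D_{\mathbb{R}})$, so $L_b$ has full $\mathbb{Z}$-rank inside $\mathcal{O}^k$, and the additive index satisfies $[\mathcal{O}^k : L_b] = |\det \pi(b)|$, a positive integer controlled by a fixed power of $|\N(b)| \le C$; call this bound $C'$. Next I would use the standard argument that $\mathcal{O}^k$ admits only finitely many left $\mathcal{O}$-submodules of index at most $C'$: any such submodule $L$ contains $C'! \,\mathcal{O}^k$ by Lagrange, and hence corresponds to a subgroup of the finite abelian group $\mathcal{O}^k / C'!\,\mathcal{O}^k$, of which there are only finitely many.

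Third, I would pick one representative $b_i \in S_C$ for each value of $L_b$ that actually arises, giving a finite list $b_1,\ldots,b_m$. For an arbitrary $b \in S_C$, choose $i$ with $L_b = L_{b_i}$. The rows of $b$ then lie in $L_{b_i} = \mathcal{O}^k b_i$, so there exists $b' \in M_k(\mathcal{O})$ with $b = b' b_i$. Symmetrically there exists $b'' \in M_k(\mathcal{O})$ with $b_i = b'' b$, and invertibility of $b$ in $M_k(D_\mathbb{R})$ forces $b' b'' = I$, whence $b' \in GL_k(\mathcal{O})$. Then $\pi(b') \in GL_{kd}(\mathbb{Z})$, so $|\det \pi(b')| = 1$ and thus $|\N(b')| = 1$.

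The only obstacle I foresee is the pedantic one of getting $\N(b') = 1$ rather than $|\N(b')| = 1$; should $\N(b') = -1$ be possible, I would simply refine each $GL_k(\mathcal{O})$-orbit into its (at most two) sub-orbits under the kernel of $\N$, doubling the list of representatives at worst. The substantive content of the lemma is a direct generalization of the classical Hermite-normal-form statement that $GL_n(\mathbb{Z})$ acts with only finitely many orbits on integer matrices of bounded nonzero determinant.
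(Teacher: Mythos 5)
Your argument is correct and, since the paper simply defers to Weil's Lemma 10.2 in \cite{W58}, it provides a useful self-contained proof. The reduction you use — classifying left $GL_k(\mathcal{O})$-orbits on bounded-norm integral matrices by the left $\mathcal{O}$-submodule $L_b = \mathcal{O}^k b \subseteq \mathcal{O}^k$, bounding the number of such submodules via the index, and then recovering $b' \in GL_k(\mathcal{O})$ from equality of the two submodules together with invertibility of $b$ in $M_k(D_{\mathbb{R}})$ — is the natural generalization of the Hermite-normal-form argument and works as written. Two points deserve a sentence each. First, the identity $[\mathcal{O}^k : L_b] = |\det \pi(b)|$ tacitly uses that $\det$ of left and right multiplication by $b$ on the $\mathbb{R}$-vector space $D_{\mathbb{R}}^k$ agree: $L_b$ is the image of $\mathcal{O}^k$ under \emph{right} multiplication by $b$ (rows), while $\pi$ in Remark \ref{re:integral} is built from \emph{left} multiplication; this is a true fact about semisimple algebras but is worth flagging, and it is also what makes $|\det\pi(b)| = |\N(b)|^{1/k}$, so that the index is bounded by $C^{1/k} \le C$. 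Second, your sign-repair at the end is not a mere pedantry: the way the lemma is used in Lemma \ref{le:surjects_from_fund_domain} requires $b' \in \Gamma = SL_k(\mathcal{O})$, so one genuinely needs $\N(b')=1$ and not merely $|\N(b')|=1$; your proposed fix of refining to $SL_k(\mathcal{O})$-orbits is correct and at most doubles the list because $\N(GL_k(\mathcal{O})) \subseteq \{\pm 1\}$ (indeed $\pi(GL_k(\mathcal{O})) \subseteq GL_{kd}(\mathbb{Z})$, so $\det\pi(b') = \pm 1$ and $\N(b') = \det\pi(b')^k$). As a bonus, your construction produces representatives $b_i \in M_k(\mathcal{O})$, which is slightly stronger than the $M_k(D)$ the statement asks for.
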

\begin{proof}

  See \cite[Lemma 10.2]{W58}.
\end{proof}



\subsection{Group of unit norm matrices}

\label{se:lie}
This subsection is going to set up the measure-theoretic requirements for Theorem \ref{th:siegel}. We will work in the homogeneous space that is the quotient of the following two groups.

\begin{align}
  G = &  \{  a \in M_{k}(D_{\mathbb{R}}) \ | \ \N(a) = 1\},\\
  \Gamma = &  \{  a \in M_{k}(\mathcal{O}) \ | \  \N(a) = 1\} .
\end{align}

Clearly, $G$ is a group. Why is $\Gamma$ a group? To see that it is a group, one must realize the matrices in $\Gamma$ as integral matrices in the sense of Remark \ref{re:integral}. Then, the group $\Gamma$ is just the subgroup of determinant $1$ integral matrices in $G$. Furthermore, this also shows that $\Gamma \subseteq G$ is a discrete group.

\begin{remark}
  Alternatively, it is also possible to write $G$ as $SL_k(D_{\mathbb{R}})$ and $\Gamma$ as $SL_k(\mathcal{O})$. We will also use the notation $SL_k(D)$ to mean the unit norm matrices of $GL_k(D)$.
\end{remark}

We want to describe a Haar measure on $G$. For that, we will use the following analogue of the Iwasawa decomposition.
\begin{align}
  GL_k(D_{\mathbb{R}})  & = \{ g \in M_k(D_{\mathbb{R}}) \ | \ g \text{ is not a zero divisor }  \} , \\
  K &  = \{ \kappa \in G \ | \ \kappa ^{*} \kappa = 1_{M_k(A)}, \N(\kappa) = 1\}, \\
  A_0 & = \{ a \in G \ | \ a \text{ is diagonal}, a_{ii} \text{ invertible}, \N(a_{ii}) >0 \},\\
  N & =  \{ n \in G \ | \ n \text{ is upper triangular with $1_{A}$ on the diagonal entries}\}.
\end{align}
Topologically, $G$ is a Lie group and the groups $K,A_{0},N$ are also Lie group topologies as closed subgroups of $GL_k(D_{\mathbb{R}})$. Note that $A_{0} \subseteq G$, so $a \in A_{0} \Rightarrow \N(a) =1 $. See Proposition \ref{pr:iwasawa} in Appendix \ref{se:haar_on_G} for a variant of Iwasawa decomposition for the group $G$.

We will now describe a Haar measure on $G$. For any topological space $X$, we will denote the vector space of compactly supported continuous $\mathbb{R}$-functions on $X$ as $C_{c}(X)$.

\begin{proposition}\label{pr:Gisunimodular}
  Let $d\kappa, da, dn$ be Haar measures on $K, A_{0}, N$ respectively. Then, the following is a Haar measure on $G$.
  \begin{align}
    C_{c}(G) \rightarrow &\  \mathbb{R} \\ 
    f \mapsto &  \  \int_{N} \int_{A_{0}} \int_{K}f(\kappa a n)\left( \prod_{i<j}^{} \frac{| \N(a_{ii})|}{ | \N(a_{jj}) |} \right)  d\kappa da dn 
  \end{align}
\end{proposition}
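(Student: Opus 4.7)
The plan is to parametrize $G$ via the Iwasawa decomposition (Proposition \ref{pr:iwasawa}), which gives a diffeomorphism $K \times A_{0} \times N \to G$ sending $(\kappa,a,n) \mapsto \kappa a n$, and to identify the resulting Jacobian factor. Since $G = SL_{k}(D_{\mathbb{R}})$ is a real semisimple Lie group, it is unimodular, so it suffices to construct one left Haar measure on $G$ of the stated shape---right-invariance will then come for free, and by uniqueness of Haar measure the claim is proved.

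I would first reduce the computation to the ``Borel''-type subgroup $B = A_{0} N$. Because $A_{0}$ normalizes $N$, the group $B$ is a semidirect product, and a standard calculation for semidirect products of Lie groups shows that a left Haar measure on $B$ is $\Delta_{B}(a)\,da\,dn$, where $\Delta_{B}(a) = |{\det \Ad(a)|_{\mathfrak{n}}}|$ is the Jacobian of the conjugation action of $A_{0}$ on the Lie algebra $\mathfrak{n}$ of $N$. To compute $\Delta_{B}$, decompose $\mathfrak{n} = \bigoplus_{i<j} D_{\mathbb{R}}^{(i,j)}$, where $D_{\mathbb{R}}^{(i,j)}$ is the copy of $D_{\mathbb{R}}$ sitting in the $(i,j)$-entry. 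For $a = \mathrm{diag}(a_{11},\ldots,a_{kk}) \in A_{0}$ and $x \in D_{\mathbb{R}}^{(i,j)}$, conjugation gives $\Ad(a)(x) = a_{ii}\, x\, a_{jj}^{-1}$. Viewing this as an $\mathbb{R}$-linear endomorphism of $D_{\mathbb{R}}$ and using that $\N$ is (up to sign) the determinant of the regular representation, left multiplication by $a_{ii}$ has determinant of absolute value $|\N(a_{ii})|$ and right multiplication by $a_{jj}^{-1}$ has determinant of absolute value $|\N(a_{jj})|^{-1}$. Hence $|\det(\Ad(a)|_{D_{\mathbb{R}}^{(i,j)}})| = |\N(a_{ii})| / |\N(a_{jj})|$, and taking the product over $i<j$ gives the factor in the statement.

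To assemble the final measure, I would combine the (bi-invariant) Haar measure $d\kappa$ on the compact group $K$ with the left Haar measure on $B = A_{0}N$ constructed above, transported to $G$ via the Iwasawa diffeomorphism. The resulting measure is visibly left-invariant under $K$ (from left-invariance of $d\kappa$) and right-invariant under $B$ (from the semidirect-product calculation). Unimodularity of $G$ plus left-invariance under $K$ and $B$ promotes this to a bi-invariant Haar measure on $G$, as $G = KB$.

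The main obstacle, and the place where I would be most careful, is the bookkeeping of the modular function: getting $\Delta_{B}$ right-side-up rather than inverted, and confirming that the convention ``$\N$ on $D_{\mathbb{R}}$ coincides with the determinant of left multiplication on $D_{\mathbb{R}}$ as a real vector space'' is consistent with the definition in use elsewhere in the paper. Beyond this, everything is a standard Iwasawa-type change of variables.
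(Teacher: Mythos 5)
Your Jacobian computation is correct and matches the one the paper ultimately performs: identify $\mathfrak{n}$ with $\bigoplus_{i<j} D_{\mathbb{R}}$, note that $\Ad(a)$ acts on the $(i,j)$-slot by $x \mapsto a_{ii}\,x\,a_{jj}^{-1}$, and use that left- and right-multiplication by an element of $D_{\mathbb{R}}$ have the same determinant, equal to $|\N(\cdot)|$. This is exactly how the paper gets $\Delta_{B}(a) = \prod_{i<j} |\N(a_{ii})|/|\N(a_{jj})|$.

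However, the surrounding framework has a genuine gap. You treat Proposition~\ref{pr:iwasawa} as asserting that $K \times A_{0} \times N \to G$ is a diffeomorphism and then ``transport'' a measure along it. In this generalized setting that map is \emph{not} injective: the remark following Proposition~\ref{pr:iwasawa} points out that $(\kappa\kappa'^{-1},\kappa' a, n)$ and $(\kappa,a,n)$ have the same image whenever $\kappa' \in K \cap A_{0}$, and Proposition~\ref{pr:kcompact} identifies $K \cap A_{0} \cong \{a \in D_{\mathbb{R}} \mid a^{*}a = 1\}^{\oplus k}$, which is a positive-dimensional compact group as soon as $D_{\mathbb{R}} \ne \mathbb{R}$ (e.g.\ a product of circles when $\mathbb{C}$ sits inside $D_{\mathbb{R}}$). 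So the multiplication map is only an open surjective submersion with positive-dimensional compact fibers, and pushing a measure forward requires a quotient argument rather than a change of variables. Your closing invariance argument is also not watertight as phrased: a measure that is left-$K$-invariant and \emph{right}-$B$-invariant is not automatically left-$G$-invariant just because $G=KB$ (to get $\mu(\kappa_{0}a_{0}n_{0}E)=\mu(E)$ you would need \emph{left}-invariance under $B$, not right), so ``unimodularity of $G$ plus $G=KB$'' does not close the loop without more care. The paper sidesteps both issues by citing a single lemma (from Knapp) that produces a Haar measure on $G'$ from a product decomposition $G'=ST$ with $S\cap T$ compact, without requiring injectivity, and applies it twice: once to $(G,K,B)$, using Proposition~\ref{pr:kcompact} that $K\cap B = K\cap A_{0}$ is compact, and once to $(B,A_{0},N)$. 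You should either invoke that lemma or explicitly quotient by $K\cap A_{0}$ and verify that the redundancy is measure-preserving; the modular-function computation itself does not need to change.
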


\begin{proof}
  See Appendix \ref{se:haar_on_G}.
\end{proof}

  Let $D_{\mathbb{R}}^{(1)}$ denote the kernel of $\N: D_{\mathbb{R}}^{*} \rightarrow \mathbb{R}$. In other words $D_{\mathbb{R}}^{(1)}$ is the set of unit norm elements of $D_{\mathbb{R}}$. Note that the group $A_{0}$ can be further decomposed as $A_0 = A^{(1)}A^{\mathbb{R}}$ where 
  \begin{align}
    A^{(1)} & = \{ a \in G \ | \ i\neq j \Rightarrow a_{ij}=0, \N(a_{ii})=1 \},\\
    A^{\mathbb{R}}  & = \{ a' \in G \ | \ i\neq j \Rightarrow a_{ij}'=0, a'_{ii} \in  \mathbb{R}_{>0} \subseteq D_{\mathbb{R}}  \}.
  \end{align}
  Note that $A^{\mathbb{R}} \cap A^{(1)} = \{ 1_{D}\}$. This decomposition is simply a consequence of writing $a_{ii} = N(a_{i})^{1/d}  \left( { a_{ii}}{N(a_{ii})^{-1/d} }\right)$, where $d= [D_{\mathbb{R}}: \mathbb{R}]$ so that $a_{ii}N(a_{ii})^{-1/d}$ is of norm one. 

  \begin{remark}
    The group $A^{\mathbb{R}}$ is actually the identity component of a maximal $\mathbb{Q}$-torus of $G$. In Chapter 18.5 of \cite{Mo2001}, the $\mathbb{Q}$-rank of $SL_k(D)$ is mentioned as $k-1$, which is exactly the rank of this torus.
  \end{remark}

  \begin{corollary}\label{co:haar}
    Let $d\kappa$, $da'$, $da$, $dn$ be Haar measures on $K,A^{\mathbb{R}},A^{(1)}, N$ respectively. Then, the following is a Haar measure on $G$.
    \begin{align}
    C_{c}(G) \rightarrow &\  \mathbb{R} \\ 
    f \mapsto &  \  \int_{N} \int_{A^{(1)}} \int_{A^{\mathbb{R}}} \int_{K}f(\kappa a' a n)\left( \prod_{i<j}^{}  \frac{ a'_{ii}}{  a'_{jj} }  \right)^{d}  d\kappa da' da dn 
  \end{align}
  \end{corollary}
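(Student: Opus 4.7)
The plan is to derive this corollary directly from Proposition \ref{pr:Gisunimodular} by factoring the Haar measure $da$ on $A_{0}$ as a product of Haar measures on $A^{\mathbb{R}}$ and $A^{(1)}$. The factorization $A_{0} = A^{\mathbb{R}} A^{(1)}$ stated in the excerpt, together with $A^{\mathbb{R}} \cap A^{(1)} = \{1_{D}\}$, furnishes a continuous bijection $A^{\mathbb{R}} \times A^{(1)} \to A_{0}$, $(a', a) \mapsto a'a$. Crucially, $A^{\mathbb{R}}$ is central in $A_{0}$: its elements are diagonal with entries in $\mathbb{R}_{>0} \subseteq D_{\mathbb{R}}$, and real scalars commute with every element of $D_{\mathbb{R}}$, so this map is in fact a topological group isomorphism.

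First I would verify that this isomorphism is a homeomorphism (the inverse $a_{0} \mapsto (a', a^{(1)})$ with $a'_{ii} = \N(a_{0,ii})^{1/d}$ and $a^{(1)}_{ii} = a_{0,ii}\N(a_{0,ii})^{-1/d}$ is continuous since $\N$ and $N^{1/d}$ are continuous on $D_{\mathbb{R}}^{*}$). Then uniqueness of Haar measure on the locally compact group $A_{0}$ implies that the Haar measure $da$ of Proposition \ref{pr:Gisunimodular} coincides, up to a positive constant, with the product measure $da' \otimes da^{(1)}$ on $A^{\mathbb{R}} \times A^{(1)}$; absorbing this constant into the choice of $da'$ or $da^{(1)}$ (both are only defined up to positive scalar anyway), we may assume exact equality.

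Next I would compute the Jacobian factor in the new coordinates. If $a_{0} = a'a^{(1)}$, then $a_{0,ii} = a'_{ii}\,a^{(1)}_{ii}$, and since $a'_{ii} \in \mathbb{R}_{>0}$ lies in the center of $D_{\mathbb{R}}$, the multiplicativity of the norm gives
\begin{align}
  \N(a_{0,ii}) \;=\; \N(a'_{ii})\,\N(a^{(1)}_{ii}) \;=\; (a'_{ii})^{d} \cdot 1 \;=\; (a'_{ii})^{d},
\end{align}
where $d=[D_{\mathbb{R}}:\mathbb{R}]$ and I used $\N(a^{(1)}_{ii}) = 1$ by the definition of $A^{(1)}$. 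Hence
\begin{align}
  \prod_{i<j} \frac{|\N(a_{0,ii})|}{|\N(a_{0,jj})|} \;=\; \prod_{i<j} \left(\frac{a'_{ii}}{a'_{jj}}\right)^{d}.
\end{align}

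Finally, substituting $a = a'a^{(1)}$ and the above Jacobian into the integral formula of Proposition \ref{pr:Gisunimodular}, and using Fubini to split $\int_{A_{0}}$ as $\int_{A^{\mathbb{R}}}\int_{A^{(1)}}$, yields the stated expression. The only mild subtlety — which I view as the main (but not serious) obstacle — is the Haar-measure decomposition in the second step: one must recognize that the centrality of $A^{\mathbb{R}}$ in $A_{0}$ makes $A_{0}$ a direct product of closed subgroups, so that product Haar measure is Haar measure. With that in hand, the corollary is a bookkeeping calculation rather than new input.
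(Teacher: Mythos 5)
Your proof is correct and is the natural route from Proposition \ref{pr:Gisunimodular}: the paper leaves the corollary without a separate proof precisely because it follows exactly as you describe, by noting that $A^{\mathbb{R}}$ is central in $A_0$ so that $A_0 \cong A^{\mathbb{R}} \times A^{(1)}$ as topological groups, invoking uniqueness of Haar measure to factor $da$, and computing $|\N(a_{0,ii})|=(a'_{ii})^{d}$ (so that the modular factor becomes $\prod_{i<j}(a'_{ii}/a'_{jj})^{d}$). The only nuance you already flag is the Haar-measure normalization on the direct product, which is absorbed into the scaling ambiguity inherent in Haar measures.
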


\begin{remark}
  It should be possible to generalize this treatment of Haar measure to the setting of a general semisimple algebra $A$ instead of $D_{\mathbb{R}}$ by meaningfully defining groups like $GL_k(A)$, $SL_k(A)$ and so on.
\end{remark}

\subsection{Siegel's mean value thorem}\label{se:SMT}
Observe that $\mathcal{O}^{k} \subseteq D_{\mathbb{R}}^{k}$ is a lattice that remains invariant under the action of elements of the group $\Gamma \subseteq G$. Hence, we can make the following identification of topological measure spaces.
\begin{align}
  G/\Gamma \simeq  \{  g \mathcal{O}^{k}  \ | \ g \in G\}.
\end{align}
We will shortly show that this measure space has a finite measure. Furthermore, we will state a nice averaging result about the expected value of a lattice-sum function over lattices in this space.

To begin, we will now make a more useful version of a Siegel domain $\fS^{*} \subset G$, one that we can fit inside $G$ and and such that $\fS^{*} \Gamma = G$. This shall be a Siegel domain of matrices, whereas the previous definition $\fS$ was a Siegel domain of quadratic forms. Let $\omega_{1}\subseteq D_{\mathbb{R}}^{(1)}$ be a relatively compact set and let $c_1,c_2>0$. Also, let $b_1, b_2, \dots , b_m$ be some elements of $GL_k(D) \subseteq G$
\begin{align}
  \underline{A}^{\mathbb{R}} & = \{ a \in GL_k(D_{\mathbb{R}})  \ | \ a'_{ij} = 0 \text{ for }i\neq j,a_{ii}' \in \mathbb{R}_{>0} \subset D_{\mathbb{R}} \}, \\
  A_{\omega_1}^{(1)} & =  \{ a \in A^{(1)}\ | \ a_{ii} \in \omega_{1}  \} ,\\
  A_{c_1}^{\mathbb{R}} & =  \{ a' \in A^{\mathbb{R}} \ | \ a'_{ii} \in \mathbb{R}_{>0} \subseteq D_{\mathbb{R}} ,  a'_{ii} \le c_1 a'_{i+1,i+1}  \} ,\\
  \underline{A}_{c_1}^{\mathbb{R}} & = \{ a \in \underline{A}^{\mathbb{R}} \ | \ a_{ii}' \in \mathbb{R}_{>0} \subseteq D_{\mathbb{R}}, a'_{ii} \le c_1 a'_{i+1,i+1} \},\\
  N_{c_2} & =  \{ n \in G \ | \ n \text{ is upper triangular with $1_{D}$ on diagonals }, \Tr(n_{ij}^{*}n_{ij})  < c_{2}  \},\\
  \fS^{1}& = \fS^{1}_{\omega_{1},c_1,c_2}  =  K A_{\omega_1}^{(1)} \underline{A}_{c_1}^{\mathbb{R}} N_{c_2},\\
  \fS^{*}& = \fS^{*}_{\omega_{1},c_1,c_2}  =  \left( \bigcup_{i=1}^{m} \fS^{1} b_i^{-1} \right) \cap G  =  \bigcup_{i=1}^{m} \N(b_i)^{1/dk}  (K A^{(1)}_{\omega_1} A_{c_1}^{\mathbb{R}} N_{c_{2}}) b_i^{-1}.
\end{align}

We can now relate this to the previously discussed generalization of Minkowski-Siegel (Theorem \ref{th:MS}).

\begin{lemma}\label{le:surjects_from_fund_domain}
  For some choice of $\omega_1,c_1,c_2$ in the definition above and for some choice of $b_1, b_2 , \dots , b_m \in GL_k(D)$, we have that the construction $\fS^{*} \subseteq G$ above satisfies $ \fS^{*} \Gamma  = G$. In other words, $\fS^{*} \subseteq G$ surjects via the map $G \rightarrow G/\Gamma$.
\end{lemma}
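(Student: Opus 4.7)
The goal is to produce, for every $g \in G$, an element $\gamma \in \Gamma$ and an index $i$ such that $g\gamma \in \mathfrak{S}^1 b_i^{-1}$; since $g\gamma \in G$ this places $g\gamma \in \mathfrak{S}^*$, and therefore $g \in \mathfrak{S}^* \gamma^{-1} \subseteq \mathfrak{S}^* \Gamma$. Both $\gamma$ and the list $b_1,\dots,b_m$ defining $\mathfrak{S}^*$ will be manufactured by combining the two reduction results of Section~\ref{se:reduction_theory}: the Minkowski--Siegel reduction Theorem~\ref{th:MS} applied to the positive definite symmetric matrix $g^* g$, followed by the norm-one correction Lemma~\ref{le:normonecorrection}.

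Concretely, Theorem~\ref{th:MS} applied to $g^* g$ yields $m \in M_k(\mathcal{O})$ with $|\N(m)| < C_3$ and $(gm)^*(gm) = m^* g^* g m \in \mathfrak{S}$. Positive definiteness of $(gm)^*(gm)$ gives $\N(m) \neq 0$, and realizing $m$ as an integer matrix via Remark~\ref{re:integral} forces $\N(m) \in \mathbb{Z}$, so $1 \leq |\N(m)| < C_3$. Lemma~\ref{le:normonecorrection} with $C = C_3$ then factors $m = \gamma b_i$ with $\gamma \in M_k(\mathcal{O})$ of norm $1$ (hence $\gamma \in \Gamma$) and $b_i$ from the finite list produced by the lemma, which is the list we adopt in the definition of $\mathfrak{S}^*$.

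Set $h := gm = g\gamma b_i$; it suffices to show $h \in \mathfrak{S}^1$ for a suitable choice of $\omega_1, c_1, c_2$. Using the Iwasawa decomposition of $GL_k(D_{\mathbb{R}})$ (Proposition~\ref{pr:iwasawa}), write $h = \kappa \tilde a n$ with $\kappa \in K$, $\tilde a$ diagonal in $GL_k(D_{\mathbb{R}})$ with $\N(\tilde a_{ii}) > 0$, and $n \in N$. Each entry splits uniquely as $\tilde a_{ii} = a'_{ii}\, a^{(1)}_{ii}$ with $a'_{ii} := \N(\tilde a_{ii})^{1/d} \in \mathbb{R}_{>0}$ and $a^{(1)}_{ii}$ of unit norm in $D_{\mathbb{R}}$; thus $a' \in \underline{A}^{\mathbb{R}}$ and $a^{(1)} \in A^{(1)}$, and they commute because $a'$ is central. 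Then $h^* h = n^* (a^{(1)})^* (a')^2 a^{(1)} n$, which by the uniqueness of Cholesky (Theorem~\ref{th:cholesky}) matches the decomposition $h^* h = t^* d t$ from $\mathfrak{S}$: $t = n$ and $d_{ii} = (a'_{ii})^2 (a^{(1)}_{ii})^* a^{(1)}_{ii}$. The three Siegel conditions then translate as follows. Condition~(c) becomes literally $n \in N_{c_2}$ with $c_2 := C_2$. Condition~(a), rewritten as $(a^{(1)}_{ii})^* a^{(1)}_{ii} / \Tr\bigl((a^{(1)}_{ii})^* a^{(1)}_{ii}\bigr) \in \omega_0$, forces $a^{(1)}_{ii}$ into the pullback $\omega_1 := \{u \in D_{\mathbb{R}}^{(1)} : u^* u / \Tr(u^* u) \in \omega_0\}$; by Remark~\ref{re:normOmegaoaway}, $\omega_0$ stays inside the invertible unit-trace elements, which makes the map $u \mapsto u^* u / \Tr(u^* u)$ proper and $\omega_1$ relatively compact in $D_{\mathbb{R}}^{(1)}$, so $a^{(1)} \in A^{(1)}_{\omega_1}$. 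Condition~(b), together with the uniform boundedness of $\Tr\bigl((a^{(1)}_{ii})^* a^{(1)}_{ii}\bigr)$ on the compact $\omega_1$, yields a constant $c_1$ with $(a'_{ii})^2 \leq c_1^2 (a'_{i+1,i+1})^2$, i.e.\ $a' \in \underline{A}^{\mathbb{R}}_{c_1}$. Hence $h \in K A^{(1)}_{\omega_1} \underline{A}^{\mathbb{R}}_{c_1} N_{c_2} = \mathfrak{S}^1$ and $g\gamma = h b_i^{-1} \in \mathfrak{S}^1 b_i^{-1} \cap G \subseteq \mathfrak{S}^*$.

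The hard part will be the bookkeeping in the third paragraph: one must align the Iwasawa factorization of $h$ with the Cholesky factorization of $h^* h$ used to define $\mathfrak{S}$, and verify that the ``square root plus radial splitting'' map from positive symmetric elements of $D_{\mathbb{R}}$ to $\mathbb{R}_{>0} \times D_{\mathbb{R}}^{(1)}$ carries the relatively compact set $\omega_0$ to a relatively compact set $\omega_1$. This properness is precisely what Remark~\ref{re:normOmegaoaway} was arranged to guarantee; everything else reduces to tracking constants through uniqueness of Cholesky.
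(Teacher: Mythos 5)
Your proof is correct and follows essentially the same route as the paper's: apply Theorem~\ref{th:MS} to $g^*g$, pass through Lemma~\ref{le:normonecorrection} to peel off a unit-norm integral factor, and then match the Iwasawa factorization of the reduced $g$ against the Cholesky data defining $\mathfrak{S}$, with $\omega_1$ the pullback of $\omega_0$ and $c_1, c_2$ read off from $C_1, C_2$. The only cosmetic differences are that you perform the $b_i$-correction before rather than after the Iwasawa/Cholesky matching, and you phrase the matching elementwise rather than as the set inclusion $\mathfrak{S} \subseteq F(K A^{(1)}_{\omega_1} \underline{A}^{\mathbb{R}}_{c_1} N_{c_2})$ that the paper states; these are equivalent.
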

\begin{proof}
  What we want to really show is that for some choice of $\fS^{*}$, for every $g \in G$, there will exist a $b \in \Gamma$ such that $gb \in \fS^{*}$.
  
  Let $\fS = \fS_{\omega_0,C_1,C_2} \subset M_k(D_{\mathbb{R}})$ be the set defined in Equation (\ref{eq:siegel_definition}), where $\omega_{0},C_1,C_2$ are chosen such that they satisfy conditions of Theorem \ref{th:MS} for the given choice of $D_{\mathbb{R}}, \mathcal{O}$ and $k$.
  Consider the map $F:g \mapsto g^{*} g$. We claim that there is a choice of $\omega_1, c_1, c_2$ such that 
  \begin{align}
  \fS_{\omega_{0},C_1,C_2} \subseteq     F(K A^{(1)}_{\omega_{1}} \underline{A}^{\mathbb{R}}_{c_1} N_{c_2} ).
\end{align}

  Let $C',C>0$ be such that $C' \ge \N(h)\ge C$ for all $h \in \omega_0$ (See Remark \ref{re:normOmegaoaway}). Set $$\omega_{1} = \{  a \in D^{(1)}_{\mathbb{R}} \ | \ \tfrac{a^*a}{\Tr(a^{*} a)} \in \omega_0\}.$$ This is a compact set, because $a \in \omega_1$ implies that
  \begin{align}
    \N\left(\frac{a^{*}a}{\Tr(a^{*}a)}\right) = \frac{1}{\Tr(a^{*}a)^{d}} \in \N(\omega_0)   \Rightarrow  (C')^{\frac{1}{d}} \le \Tr(a^{*} a ) \le C^{\frac{1}{d}}.
  \end{align} 
  Now set $c_1 = \sqrt{C_1 (C'/C)^{\frac{1}{d}}}$ and  $c_2 = C_2$. Let $t^*dt \in \fS_{\omega_0, C_1,C_2}$ where $t^{*} d t$ is the Cholesky decomposition, then $t \in N_{c_2}$. Write $d = a' a$ uniquely for $a' \in \underline{A}^{\mathbb{R}}$ and $a \in A^{(1)}$. Then
  \begin{align}
    & \frac{d_{ii}}{\Tr(d_{ii})} = \frac{(a_{ii}^{*}a_{ii}) (a'_{ii})^{2}}{\Tr(a_{ii}^{*}a_{ii})(a'_{ii})^{2}} = \frac{a_{ii}^{*}a_{ii} }{\Tr(a_{ii}^{*}a_{ii})} \in \omega_0 \Rightarrow a_{ii} \in \omega_{1} ,\\
    & \frac{\Tr(d_{ii})}{\Tr(d_{i+1,i+1})}  = \frac{\Tr(a_{ii}^{*}a_{ii}) (a'_{ii})^{2}}{\Tr((a_{i+1,i+1})^{*}a_{i+1,i+1})(a'_{i+1,i+1})^{2}} \le \frac{C^{\frac{1}{d}}}{(C')^{\frac{1}{d}}} c_1^{2}  = C_1,\\
    & \Tr(t_{ij}^{*}t_{ij})   \le c_2 = C_2.
  \end{align}

  Hence, $t^{*} d  t = F( \kappa a a' t)$ for any $\kappa \in K$ and the above choice of $a \in A^{(1)}_{\omega_1}, a'\in \underline{A}^{\mathbb{R}}_{c_1}$ and this settles the claim.

  Now for any $g \in G$, we know from Theorem \ref{th:MS} that for some $b \in M_k(\mathcal{O})$ we have $b^{*}g^{*}gb \in \fS \Rightarrow b^{*}g^{*}gb \in K A^{(1)}_{\omega_1} \underline{A}_{c_1}^{\mathbb{R}} N_{c_2} \Rightarrow gb \in K( K A^{(1)}_{\omega_1} \underline{A}_{c_1}^{\mathbb{R}} N_{c_2} )= K A^{(1)}_{\omega_1} \underline{A}_{c_1}^{\mathbb{R}} N_{c_2}   $.
  From Lemma \ref{le:normonecorrection}, we know that we can find finitely many $b_1, b_2, \dots, b_m \in M_k(D)$ such that $b=b'b_i$ for some $b' \in \Gamma$ and for some $1 \le i \le m$. This implies that $g b' \in \bigcup_{i=1}^{m}  (K A^{(1)}_{\omega_1} \underline{A}_{c_1}^{\mathbb{R}} N_{c_2} )b_i^{-1}$. 
  
  Observe that $N(gb') = 1$, whereas for $(\kappa a a' n )b_{i}^{-1} \in(  K A^{(1)}_{\omega_1} \underline{A}_{c_1}^{\mathbb{R}} N_{c_2}) b^{-1}_{i}$ we have $\N(\kappa a a' n b_{i}^{-1}) = \N(a')/\N(b_{i})$. So $\N(b_i) > 0$ and $(\kappa a a' n )b_{i}^{-1} \in(  K A^{(1)}_{\omega_1} \underline{A}_{c_1}^{\mathbb{R}} N_{c_2}) b^{-1}_{i} \cap G = \N(b_{i})^{\frac{1}{dk}}(  K A^{(1)}_{\omega_1} {A}_{c_1}^{\mathbb{R}} N_{c_2}) b^{-1}_{i}$
  

\end{proof}
\begin{remark}\label{re:liesinQ}
  Note that $\{ b_i\}_{i=1}^{n}$ lie in $GL_k(D) \subseteq SL_k(D_{\mathbb{R}})$. This means that for some $N \in \mathbb{N}$, $Nb_i \in M_k(\mathcal{O})$.
\end{remark}

Now we are in a position to consider $G/\Gamma$ as a probability space.

\begin{proposition}\label{pr:finite_vol}
  The space $G/\Gamma$ carries a unique probability measure that is left-invariant over the action of $G$.
\end{proposition}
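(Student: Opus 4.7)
The plan is to derive existence and uniqueness of the probability measure from standard Haar-measure theory: it suffices to show that $G$ is unimodular and that $G/\Gamma$ has finite total volume for a $G$-invariant Radon measure, which can then be normalized. Since $\Gamma$ is discrete in $G$ (as it sits inside $SL_{kd}(\mathbb{Z})$ via Remark~\ref{re:integral}), it is trivially unimodular. Consequently, once $G$ is known to be unimodular, a nonzero $G$-invariant Radon measure on $G/\Gamma$ exists and is unique up to a positive scalar.

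Unimodularity of $G = SL_k(D_\mathbb{R})$ is standard: either one appeals to the fact that the real points of a semisimple algebraic group are automatically unimodular, or one verifies directly from Corollary~\ref{co:haar} that the modular factor $\prod_{i<j} |\N(a_{ii})|/|\N(a_{jj})|$ is precisely what makes the given left Haar measure also right-invariant.

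For finiteness of volume, I would invoke Lemma~\ref{le:surjects_from_fund_domain}, which provides a Siegel set $\fS^{*} \subseteq G$ with $\fS^{*} \Gamma = G$. Because $\fS^{*}$ then surjects onto $G/\Gamma$, any fundamental domain may be chosen inside $\fS^{*}$, and hence $\vol(G/\Gamma) \le \vol(\fS^{*})$. Writing $\fS^{*}$ as a finite union of right-translated rescalings of $K A^{(1)}_{\omega_1} A^\mathbb{R}_{c_1} N_{c_2}$ and using unimodularity of $G$, the bound reduces to verifying finiteness of
\begin{align}
\vol(K A^{(1)}_{\omega_1} A^\mathbb{R}_{c_1} N_{c_2}) = \vol(K)\cdot \vol(A^{(1)}_{\omega_1})\cdot \vol(N_{c_2}) \cdot \int_{A^\mathbb{R}_{c_1}} \prod_{i<j}\left(\frac{a'_{ii}}{a'_{jj}}\right)^d da',
\end{align}
where the factorization comes from Corollary~\ref{co:haar}. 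The first three factors are finite because $K$ is compact, $A^{(1)}_{\omega_1}$ is relatively compact in $A^{(1)}$ by choice of $\omega_1$, and $N_{c_2}$ is cut out in the upper-triangular unipotent group by the trace bound $\Tr(n_{ij}^{*}n_{ij}) < c_2$, hence relatively compact in affine coordinates.

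The essential convergence is that of the remaining $A^\mathbb{R}$ integral, which is the main obstacle. Parametrizing $A^\mathbb{R}_{c_1}$ by the consecutive ratios $t_l = a'_{ll}/a'_{l+1,l+1}$ for $l=1,\dots,k-1$ (with the norm-one constraint $\prod_i a'_{ii} = 1$ determining the $a'_{ii}$ from the $t_l$), the Siegel condition becomes $0 < t_l \le c_1$, and a direct count shows the integrand transforms into the monomial $\prod_{l=1}^{k-1} t_l^{d\,l(k-l)}$ with strictly positive exponents. Against the natural multiplicative Haar measure $\prod_l dt_l/t_l$ on $A^\mathbb{R}$, the integral factors as $\prod_l \int_0^{c_1} t_l^{dl(k-l)-1}\, dt_l$, each factor convergent because $dl(k-l) > 0$ for $1 \le l \le k-1$. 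The tricky point is precisely this convergence at $t_l \to 0^+$: the integrand formally blows up as the ratios $a'_{ii}/a'_{jj}$ tend to zero, but the polynomial weight from the Haar measure supplies exactly the vanishing needed. This yields $\vol(G/\Gamma) < \infty$, and normalizing produces the unique $G$-invariant probability measure.
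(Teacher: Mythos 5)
Your proof is correct and follows essentially the same route as the paper: reduce to finite volume of the Siegel set $\fS^{*}$, factor out the compact pieces $K$, $A^{(1)}_{\omega_1}$, $N_{c_2}$ using Corollary~\ref{co:haar}, and verify convergence of the remaining $A^{\mathbb{R}}$-integral by parametrizing via the consecutive ratios $t_l = a'_{ll}/a'_{l+1,l+1}$ and observing the exponents $d\,l(k-l)$ are positive. You are somewhat more explicit than the paper about the role of unimodularity of $G$ and $\Gamma$ in ensuring existence and uniqueness of the $G$-invariant measure on the quotient, but the substance of the argument is identical.
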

\begin{proof}
  The Haar measure of $G$ restricts to left-invariant measure on $G/\Gamma$ since $\Gamma$ is discrete inside $G$. Since $\fS^{*} \subseteq G$ surjects onto $G/\Gamma$, it is sufficient to show that $\fS^{*}$ has a finite measure in $G$.

  The set $\fS^{*}$ is just a union of finitely many translates of $\fS^{1}$. So let us show that $\fS^{1} \subseteq G$ has finite measure. This is to show that the following integral is convergent.
  \begin{align}
    \int_{N_{c_2}} \int_{A^{(1)}_{\omega_{1}}} \int_{A^{\mathbb{R}}_{c_1}} \int_{K}\left( \prod_{i<j}^{}  \frac{ a'_{ii}}{  a'_{jj} }  \right)^{d}  d\kappa da' da dn .
  \end{align}

We can separate the variables in the above integral. Observe that all the integrals other than the one over $A_{c_1}^{\mathbb{R}}$ is over a compact set so must be finite. It simply remains to be shown that the following integral is finite.
\begin{align}
    \int_{A^{\mathbb{R}}_{c_1}} \left( \prod_{i<j}^{}  \frac{ a'_{ii}}{  a'_{jj} }  \right)^{d}  da'.
\end{align}

  In that case, the above integral becomes
\begin{align}
    \int_{A^{\mathbb{R}}_{c_1}} \left( \prod_{i<j}^{}  \frac{ a'_{ii}}{  a'_{jj} }  \right)^{d}  da'.
\end{align}
The group $A^{\mathbb{R}}$ is topologically isomorphic to $(\mathbb{R}^{>0})^{k-1}$, but let us make this identification in the following slightly convoluted manner to make the integral easier for us.
\begin{align}
  A^{\mathbb{R}} \rightarrow&  \  (\mathbb{R}^{>0})^{k-1} \\
  a' \rightarrow  & \ \frac{a'_{ii}}{a'_{(i+1)(i+1)}} 
\end{align}
The above is an isomorphism of locally compact topological groups, and therefore the Haar measure $da'$ can be replaced by a Haar measure of $(\mathbb{R}^{>0})^{k-1}$. Write $y_i = a'_{ii}/a'_{(i+1)(i+1)}$ and now all that remains is to see that the following is a finite integral.
\begin{align}
  \int_{0}^{c_1} \int_{0}^{c_1} \dots \int_{0}^{c_1} \left( \prod_{i<j} y_i^{d} y_{i+1}^{d} \dots y_{j-1}^{d} \right) \frac{dy_1}{y_1} \frac{dy_2}{ y_2} \dots \frac{dy_{k-1}}{y_{k-1}}.
\end{align}
\end{proof}

\begin{remark}
  Proposition \ref{pr:finite_vol} also follows directly \cite{BHC62}, once we know that our group $G$ admits no non-trivial $\mathbb{Q}$-characters.
\end{remark}

We will now prove the following theorem, which is a generalization of the Siegel mean value theorem first presented in $\cite{Sie45}$. 

\begin{theorem}
  \label{th:siegel}
  Let $D$ be a $\mathbb{Q}$-division algebra containing an order $\mathcal{O} \subseteq D$. Let $G = SL_k(D_{\mathbb{R}})$ and $\Gamma = SL_k(\mathcal{O})$, for some $k \ge 2$. Let $dg$ be the probability measure on $G/\Gamma$ that is left-invariant under $G$ action. Then for any $f \in C_{c} (D_{\mathbb{R}}^{k})$, we obtain that 
  \begin{align}
    \int_{G / \Gamma}\left( \sum_{v \in g \mathcal{O}^{k} \setminus \{ 0\}} f(v) \right) dg =  \int_{D_{\mathbb{R}}^{k}}^{} f(x) dx,
  \end{align}
  where $dx$ is a Lebesgue measure on $D_{\mathbb{R}}^{k}$ with respect to which $\mathcal{O}^{k}$ has a covolume of $1$.
\end{theorem}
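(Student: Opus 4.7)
The proof follows the Siegel--Weil averaging strategy. The plan is to show that both sides define $G$-invariant Radon measures on $D_{\mathbb{R}}^{k}$, conclude by a uniqueness argument that they agree up to a multiplicative constant, and then pin down that constant from the given normalizations.

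For invariance, the right-hand side is $G$-invariant because every $h \in G$ has reduced norm $1$ and so preserves Lebesgue measure, while the left-hand side is $G$-invariant because the substitution $g \mapsto hg$ inside the left-invariant integral over $G/\Gamma$ carries $g\mathcal{O}^{k}$ to $hg\mathcal{O}^{k}$. A crucial observation is that any nonzero $v \in \mathcal{O}^{k}$ has a component $v_i \in D \setminus \{0\}$, which is invertible in $D_{\mathbb{R}}$ (its inverse inside the division algebra $D$ still inverts it in $D_{\mathbb{R}}$), so $v$ lies in the open, full-Lebesgue-measure $G$-orbit $\mathcal{O}_{\mathrm{open}} \subset D_{\mathbb{R}}^{k}$. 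Both sides are therefore supported on $\mathcal{O}_{\mathrm{open}} \cong G/H$, where $H = \mathrm{Stab}_{G}(e_1)$ is a semidirect product $D_{\mathbb{R}}^{k-1} \rtimes SL_{k-1}(D_{\mathbb{R}})$, hence unimodular; so $G/H$ carries a unique (up to scalar) $G$-invariant Radon measure, and the left-hand side must equal $c$ times the right-hand side for some $c \ge 0$.

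Integrability of the left-hand side---that $g \mapsto \sum_{v \in g\mathcal{O}^{k}\setminus\{0\}} f(v)$ is integrable on $G/\Gamma$---follows from the coarse fundamental domain $\fS^{*}$ of Lemma \ref{le:surjects_from_fund_domain}: for $g$ ranging over $\fS^{*}$, the count $\#\left(g\mathcal{O}^{k}\cap\mathrm{supp}(f)\right)$ can be bounded in terms of the Iwasawa diagonal coordinates, and the resulting Haar-weighted integral converges against the volume form from Corollary \ref{co:haar} by essentially the same computation that underpins the finite-covolume conclusion of Proposition \ref{pr:finite_vol}.

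To calibrate $c$, I would test the identity on a specific $f$: for a small neighborhood $U$ of a generic point $v_0 \in \mathcal{O}_{\mathrm{open}}$, arranged so that $g\mathcal{O}^{k} \cap U$ has at most one element for almost every $g \in G/\Gamma$, the left-hand side unfolds against the single $\Gamma$-orbit in $\mathcal{O}^{k}\setminus\{0\}$ that contributes and reduces to an integral over $G/\Gamma_{H}$, where $\Gamma_{H} = \Gamma \cap H$; the normalizations $\cov(\mathcal{O}^{k}) = 1$ and $\vol(G/\Gamma) = 1$ (Proposition \ref{pr:finite_vol}) then conspire to force $c = 1$. The hardest step is expected to be precisely this last unfolding computation over a non-commutative $\mathcal{O}$: the $\Gamma$-orbit structure on $\mathcal{O}^{k}\setminus\{0\}$ is not as transparent as the classical $\gcd$-decomposition over $\mathbb{Z}$, and controlling the relative volume $\vol(H/\Gamma_{H})$ calls for the maximality (or Dedekind-type structure) of $\mathcal{O}$ together with a strong-approximation-type transitivity of $SL_{k}(\mathcal{O})$ on primitive vectors.
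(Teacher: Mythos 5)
Your framework---identify both sides as $G$-invariant Radon measures supported on the open orbit $D_{\mathbb{R}}^{k}\setminus\{0\}\cong G/H$ with $H$ unimodular, conclude equality up to a constant $c\ge 0$, and handle integrability via the Siegel domain---is in the right spirit and in fact recovers what the paper does in Lemma~\ref{le:epsind}: the unfolding over $\Gamma$-orbits there produces exactly the statement that the left-hand side equals $C\int f\,dx$ for some constant $C$. Your integrability sketch likewise tracks Lemma~\ref{le:dominate}. So far, so good.

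The genuine gap is in your calibration of $c$. First, your local-test heuristic is wrong as stated: even if $U$ is so small that $\#\left(g\mathcal{O}^{k}\cap U\right)\le 1$ for almost every $g$, it is \emph{not} true that only one $\Gamma$-orbit contributes. As $g$ ranges over $G/\Gamma$, distinct $g$'s push elements of $\mathcal{O}^{k}$ from distinct $\Gamma$-orbits into $U$, so the unfolded sum is still
\begin{align}
  C = \sum_{[v]\in (\Gamma\backslash\mathcal{O}^{k})\setminus\{[0]\}} c_{v}\,c'_{v},
\end{align}
a sum over all $\Gamma$-orbit classes, not one. Evaluating this sum directly is essentially a zeta-function computation for the order $\mathcal{O}$ (the non-commutative analogue of Siegel's $\zeta(2)\cdots\zeta(n)$ identity), and you implicitly acknowledge the cost: you invoke ``maximality (or Dedekind-type structure) of $\mathcal{O}$'' and ``strong-approximation-type transitivity of $SL_{k}(\mathcal{O})$ on primitive vectors.'' Neither hypothesis is part of the theorem---it holds for an \emph{arbitrary} order---and neither is established anywhere in the paper, so this route, as proposed, does not close. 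The paper's proof avoids exactly this: having shown (via the unfolding) that the integral equals $C\int f\,dx$ with $C$ independent of $f$, it replaces $f$ by the dilate $f_{\varepsilon}$ (Lemma~\ref{le:epsind}), observes that $\varepsilon^{dk}\Phi_{f_{\varepsilon}}(g\Gamma)\to\int f\,dx$ pointwise as a Riemann-sum limit, and uses the uniform domination of Lemma~\ref{le:dominate} to pass the limit under the integral, forcing $C=1$ without any orbit counting, primitivity analysis, or volume-of-stabilizer computation. That dilation trick is the missing piece you would need to supply.
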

\begin{remark}\label{re:siegelf}
  Through a small application of the dominated convergence theorem, one can take $f$ to be any Riemann integrable compactly supported function.
\end{remark}

Let us slightly rephrase the theorem. Given a function $f: D_{\mathbb{R}}^{k} \rightarrow \mathbb{R}$ that is compactly supported and continuous, one can make the function $\Phi_{f} : G/\Gamma \rightarrow \mathbb{R}$ given by 
\begin{align}
  \Phi_f(g \Gamma ) = \sum_{v \in g \mathcal{O}^{k} \setminus \{ 0\}}^{} f(v).
\end{align}
This function exists, i.e. does not diverge for any $g \Gamma$, because $f$ is compactly supported and locally it is a finite sum of some evaluations of $f$ and so it is continuous.
The theorem above simply states that $\Phi_f$ has a finite expectation value on $G/\Gamma$
and moreover the expectation is just equal to the integral of $f$. That is,
\begin{align}
  \int_{G/\Gamma}{ \Phi_f(g \Gamma)} dg =  \int_{D_{\mathbb{R}}^{k}}^{} f(x) dx.
\end{align}

Note that if the theorem is indeed true, and if we replace $f$ by an $\varepsilon$-dilate of $f_{\varepsilon}$, i.e. a function $x \mapsto f(\varepsilon x)$ for some $\varepsilon> 0$, we observe that
\begin{align}
  & \int_{D_{\mathbb{R}}^{k}}   f( \varepsilon x) dx   =  \varepsilon^{-dk} \int_{D_{\mathbb{R}}^{k}}^{} f(x) dx  \\ 
   \Rightarrow  &   \int_{G/\Gamma}   \Phi_{f_\varepsilon} d g      = \varepsilon^{-dk} \int_{G/\Gamma}^{} \Phi_f(g\Gamma) d g \\
   \Rightarrow  & \int_{G/\Gamma}^{} \Phi_f(g\Gamma) dg = \int_{G/\Gamma}^{ }  \left(  \varepsilon^{dk} \sum_{y \in \varepsilon g \mathcal{O}^{k} \setminus \{ 0\}}^{} f(  y) \right) dg\Gamma   \label{eq:epsShift}.
\end{align}

Now note that the following limit holds for all $g\Gamma \in G/\Gamma$.
\begin{align}
  \lim_{\varepsilon \rightarrow 0} \left( \varepsilon^{dk} \sum_{y \in \varepsilon g\mathcal{O}^{k} \setminus \{ 0\}}^{} f(y)\right) = \int_{D^k_{\mathbb{R}}}^{} f(x) dx .
\end{align}

Hence, this inspires us to try to use the dominated convergence theorem to prove Theorem \ref{th:siegel}. First, let us try to establish Equality \ref{eq:epsShift} through some other means. The following two lemmas will help us finish the proof of Theorem \ref{th:siegel}.

\begin{lemma}\label{le:dominate}
  Whenever $f \in C_c(D^k_{\mathbb{R}})$, the function $\Phi_f$ is absolutely integrable on $G/\Gamma$. That is, the integral of $| \Phi_f |$ is finite. Furthermore, for any $0 < \varepsilon \le 1$, the function $ \varepsilon^{dk}\Phi_{f_{\varepsilon}}$ is uniformly dominated (independent of $\varepsilon$) by an absolutely integrable function on $G/\Gamma$.
\end{lemma}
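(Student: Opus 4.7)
The plan is to reduce the integrability question on $G/\Gamma$ to integration over the coarse fundamental domain $\fS^{*}$ of Lemma \ref{le:surjects_from_fund_domain}, then to estimate $\Phi_{f}$ pointwise on each Siegel piece, and finally to integrate the resulting bound against the explicit Haar measure of Corollary \ref{co:haar}. Replacing $f$ by $|f|$, I may assume $f \ge 0$, and since $\fS^{*}$ surjects onto $G/\Gamma$ and decomposes as a finite union of scaled right-translates $\N(b_{\iota})^{1/dk}(KA^{(1)}_{\omega_{1}}\underline{A}^{\mathbb{R}}_{c_{1}}N_{c_{2}})b_{\iota}^{-1}$ indexed by $\iota=1,\dots,m$, it suffices to prove, for each $\iota$, the finiteness of
\begin{align}
  \int_{KA^{(1)}_{\omega_{1}}\underline{A}^{\mathbb{R}}_{c_{1}}N_{c_{2}}} \Phi^{L_{\iota}}_{\tilde{f}_{\iota}}(h)\, dh,
\end{align}
where $L_{\iota}=b_{\iota}^{-1}\mathcal{O}^{k}$ is a lattice in $D_{\mathbb{R}}^{k}$, $\tilde{f}_{\iota}(v):=f(\N(b_{\iota})^{1/dk}v)\in C_{c}(D_{\mathbb{R}}^{k})$ absorbs the scalar, and $\Phi^{L}_{\phi}(h)=\sum_{u\in L\setminus\{0\}}\phi(hu)$.

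For the pointwise bound I write $h=\kappa a' a n$ with $\kappa\in K$, $a'\in\underline{A}^{\mathbb{R}}_{c_{1}}$, $a\in A^{(1)}_{\omega_{1}}$, $n\in N_{c_{2}}$, and use that $\kappa$ acts isometrically in the Euclidean structure on $D_{\mathbb{R}}^{k}$ induced by the trace form. Since $\tilde{f}_{\iota}$ is compactly supported and both $A^{(1)}_{\omega_{1}}$ and $N_{c_{2}}$ are relatively compact, a term $\tilde{f}_{\iota}(\kappa a'an u)$ can be nonzero only when each coordinate of $anu$ lies in a ball of radius $\lesssim 1/a'_{jj}$, uniformly in $(\kappa,a,n)$. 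A standard back-substitution count (project the lattice $L_{\iota}$ onto the last coordinate, solve for $u_{k}$, then for $u_{k-1}$ in an affine translate, and so on) gives, uniformly over the compact parameter space,
\begin{align}
  \Phi^{L_{\iota}}_{\tilde{f}_{\iota}}(\kappa a'an) \le C\prod_{j=1}^{k}\Bigl(1+\tfrac{R}{a'_{jj}}\Bigr)^{d}
\end{align}
for constants $C,R>0$ independent of $a'$.

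I then integrate this against the Haar-measure weight $\prod_{i<j}(a'_{ii}/a'_{jj})^{d}$ from Corollary \ref{co:haar}, using the change of variables $y_{l}=a'_{ll}/a'_{l+1,l+1}\in(0,c_{1}]$ already employed in the proof of Proposition \ref{pr:finite_vol}. After applying $(1+x)^{d}\le 2^{d}(1+x^{d})$ and expanding $\prod_{j}(1+(R/a'_{jj})^{d})=\sum_{S\subseteq\{1,\dots,k\}}\prod_{j\in S}(R/a'_{jj})^{d}$, each summand becomes a monomial in the $y_{l}$'s. Using $a'_{jj}=\prod_{l\ge j}y_{l}^{(k-l)/k}\prod_{l<j}y_{l}^{-l/k}$ (deduced from $\prod_{i}a'_{ii}=1$), a direct computation shows that the exponent of $y_{l}$ in the integrand of the $S$-summand equals $dl(k-l)+d(l|S|/k-s_{l})-1$, where $s_{l}=|S\cap\{1,\dots,l\}|$. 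Since $1\le l\le k-1$ and $s_{l}\le l$, this exponent is always $>-1$: for $l\le k-2$ one has $l(k-l)\ge 2l>l\ge s_{l}$, while for $l=k-1$ one has $l(k-l)-s_{l}\ge 0$ and the additional term $d(k-1)|S|/k$ is strictly positive whenever $|S|>0$ (the case $S=\emptyset$ reduces to the convergence already established in Proposition \ref{pr:finite_vol}). Hence the integral converges, establishing absolute integrability of $\Phi_{f}$.

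For the uniform domination of the $\varepsilon$-dilates I repeat the lattice-point count with $f_{\varepsilon}(v)=f(\varepsilon v)$ in place of $f$; the coordinate constraint becomes $\lesssim 1/(\varepsilon a'_{jj})$, yielding
\begin{align}
  \varepsilon^{dk}\Phi_{f_{\varepsilon}}(g) \le C\varepsilon^{dk}\prod_{j=1}^{k}\Bigl(1+\tfrac{R}{\varepsilon a'_{jj}}\Bigr)^{d} = C\prod_{j=1}^{k}\Bigl(\varepsilon+\tfrac{R}{a'_{jj}}\Bigr)^{d} \le C\prod_{j=1}^{k}\Bigl(1+\tfrac{R}{a'_{jj}}\Bigr)^{d}
\end{align}
for all $\varepsilon\in(0,1]$, so the same integrable function produced above uniformly dominates $\varepsilon^{dk}\Phi_{f_{\varepsilon}}$. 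The main technical obstacles I anticipate are the uniformity of the lattice-point count over the compact parameter set $K\times A^{(1)}_{\omega_{1}}\times N_{c_{2}}$ and the exponent bookkeeping that verifies convergence at the cuspidal boundary where some $y_{l}\to 0$.
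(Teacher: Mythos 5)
Your proposal is correct and follows essentially the same route as the paper: reduce to the coarse fundamental domain $\fS^{*}$ of Lemma \ref{le:surjects_from_fund_domain}, obtain a pointwise bound $\prod_{j}(B_1+B_2(R'/a'_{jj})^{d})$ on the lattice-point count uniform over the compact $K\times A^{(1)}_{\omega_1}\times N_{c_2}$ factors, absorb $\varepsilon$ for $\varepsilon\le 1$, and integrate via the $y_l=a'_{ll}/a'_{l+1,l+1}$ change of variables with exponent bookkeeping at the cusp. The only cosmetic differences are that you do a direct back-substitution count where the paper conjugates by $a'$ to produce a relatively compact set $Y$ and enlarges the radius, and that you treat all $k$ diagonal factors as a single index set $S\subseteq\{1,\dots,k\}$ rather than separating off the $k$-th factor as the paper does with $I\subseteq\{1,\dots,k-1\}$.
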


\begin{lemma}\label{le:epsind}
  For any $\varepsilon > 0$, and $f, \Phi_f, G/\Gamma$ as before, then we have
  \begin{align}
    \int_{G/\Gamma}^{} \Phi_f(g\Gamma) dg = \int_{G/\Gamma}^{ }  \left(  \varepsilon^{dk} \sum_{y \in \varepsilon g\mathcal{O}^{k} \setminus \{ 0\}}^{} f(  y) \right) dg   = \int_{G/\Gamma}^{} \left(  \varepsilon^{dk} \Phi_{f_{\varepsilon}}(g\Gamma) \right) d g .
     \label{eq:varepsilontozero}
   \end{align}
\end{lemma}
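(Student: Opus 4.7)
The second equality is routine: substituting $v = y/\varepsilon$ inside the inner sum transforms $y \in \varepsilon g\mathcal{O}^k \setminus \{0\}$ into $v \in g\mathcal{O}^k \setminus \{0\}$, and $f(y) = f(\varepsilon v) = f_\varepsilon(v)$, so the sum becomes $\Phi_{f_\varepsilon}(g\Gamma)$. The substance of the lemma is thus the first equality.

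The plan is to identify the functional $I(f) := \int_{G/\Gamma} \Phi_f(g\Gamma)\, dg$ as a positive scalar multiple of Lebesgue measure on $D_\mathbb{R}^k$. First I would check that $I$ is a well-defined positive linear functional on $C_{c}(D_{\mathbb{R}}^{k})$: positivity and linearity are clear, and finiteness is Lemma \ref{le:dominate}. Next I would verify that $I$ is $G$-invariant: for $h \in G$, expanding $\Phi_f(g\Gamma) = \sum_{\lambda \in \mathcal{O}^k \setminus \{0\}} f(g\lambda)$ and substituting $g' = hg$ gives $I(f \circ h) = \int_{G/\Gamma} \sum_\lambda f(g'\lambda)\, dg' = I(f)$, by the left-$G$-invariance of the probability measure $dg$. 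By Riesz representation, $I$ corresponds to a $G$-invariant Radon measure $\mu_I$ on $D_\mathbb{R}^k$.

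Next I would invoke the uniqueness of $G$-invariant Radon measures on a transitive $G$-space. For $k \ge 2$, the group $G = SL_k(D_\mathbb{R})$ acts transitively on a conull open subset of $D_\mathbb{R}^k \setminus \{0\}$: decomposing $D_\mathbb{R} \simeq \prod_\nu M_{n_\nu}(F_\nu)$ with $F_\nu \in \{\mathbb{R}, \mathbb{C}, \mathbb{H}\}$, each factor $SL_{kn_\nu}(F_\nu)$ acts transitively on the set of full-rank $kn_\nu \times n_\nu$ matrices over $F_\nu$ when $k \ge 2$. Because $D$ is a division algebra, every non-zero element of $\mathcal{O}^k$ maps to a tuple of full-rank matrices in every Wedderburn component, so the non-generic locus contributes nothing on the lattice-sum side and carries Lebesgue measure zero on the integral side. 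Uniqueness then yields $\mu_I = c \cdot dx$ for some $c = c(D, \mathcal{O}, k) > 0$, and hence $I(f) = c \int_{D_\mathbb{R}^k} f(x)\, dx$. The first equality is then immediate from the change of variables $y = \varepsilon x$, which gives $\varepsilon^{dk} I(f_\varepsilon) = c \varepsilon^{dk} \int f(\varepsilon x)\, dx = c \int f(y)\, dy = I(f)$.

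The main obstacle I expect is verifying the transitivity-modulo-measure-zero claim carefully, especially handling the Wedderburn decomposition of $D_\mathbb{R}$ when $D$ is noncommutative and confirming that the non-generic orbits really have $\mu_I$-measure zero. Notably this plan already establishes Theorem \ref{th:siegel} up to the value of $c$; the remaining step---that $c = 1$---is then obtained from the dominated convergence limit $\varepsilon \to 0$ (using Lemma \ref{le:dominate} as the dominating function) together with the pointwise Riemann-sum limit $\varepsilon^{dk} \Phi_{f_\varepsilon}(g\Gamma) \to \int_{D_\mathbb{R}^k} f\, dx$, which completes the proof of Theorem \ref{th:siegel}.
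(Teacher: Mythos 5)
Your proof is correct but takes a genuinely different route from the paper's. The paper proves the same key intermediate fact, namely $\int_{G/\Gamma}\Phi_f\,dg = C\int_{D_\mathbb{R}^k}f\,dx$ for some finite constant $C$, by a hands-on unfolding: it decomposes $\Phi_f$ as a sum over $\Gamma$-orbits $[v]\in\Gamma\backslash\mathcal{O}^k$, justifies the sum-integral interchange by dominated convergence, and for each orbit unfolds $\int_{G/\Gamma}\sum_{y\in[v]}f(gy)\,dg = \int_{G/\Gamma_v}f(gv)\,dg$, which it then factors through $G/G_v \times G_v/\Gamma_v$, identifying $G/G_v$ with (essentially) $D_\mathbb{R}^k\setminus\{0\}$ to extract the factor $\int f\,dx$. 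You instead package the same content abstractly: treat $I(f)=\int_{G/\Gamma}\Phi_f\,dg$ as a positive linear functional on $C_c(D_\mathbb{R}^k)$ (finiteness is Lemma \ref{le:dominate}), observe $G$-invariance, apply Riesz representation to get a $G$-invariant Radon measure $\mu_I$, and use uniqueness of the $G$-invariant measure on the big orbit to conclude $\mu_I = c\,dx$. Both proofs ultimately rest on the same uniqueness principle for $G$-invariant measures on a transitive homogeneous space, and both conclude the stated equality by the change of variables $y = \varepsilon x$. The paper's orbit decomposition is more elementary and constructive but requires careful bookkeeping with quotient measures and their normalizations; yours is shorter and cleaner at the cost of quoting Riesz and the uniqueness theorem for invariant measures. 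One point in your favor: you explicitly note that $G=SL_k(D_\mathbb{R})$ acts transitively only on the conull open subset of $D_\mathbb{R}^k\setminus\{0\}$ consisting of tuples that are full rank in every Wedderburn component, and that $\mathcal{O}^k\setminus\{0\}$ lives in this big orbit because $D$ is a division algebra. The paper asserts without qualification that $G$ acts transitively on $D_\mathbb{R}^k\setminus\{0\}$, which is literally false once $D_\mathbb{R}$ has more than one simple factor (e.g.\ $D$ a real quadratic field); your version of the transitivity statement is the correct one, and the paper's argument only goes through because every $v\in\mathcal{O}^k\setminus\{0\}$ happens to lie in the generic orbit, exactly as you observe.
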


Before proving either of the lemmas, let us show how Theorem \ref{th:siegel} is implied by them.
\begin{proof} (of Theorem \ref{th:siegel})

  Just take $\varepsilon \rightarrow 0 $ in Equation (\ref{eq:varepsilontozero}). By Lemma \ref{le:dominate}, we are guaranteed the following exchange of limits.
  \begin{align}
    \int_{G/\Gamma}^{ } \lim_{\varepsilon \rightarrow 0} \left( \varepsilon^{dk} \Phi_{f_{\varepsilon}}(g\Gamma) \right) dg = \lim_{\varepsilon \rightarrow 0} \int_{G/\Gamma} \varepsilon^{dk} \Phi_{f_{\varepsilon}} ( g\Gamma) d g \stackrel{*}{=} \lim_{\varepsilon \rightarrow 0} \int_{G/\Gamma}^{} \Phi_{f}(g\Gamma) dg .
  \end{align}
  The equality marked with $*$ is due to Lemma \ref{le:epsind}. The final expression on the right is independent of $\varepsilon$ and therefore is equal to the limit. Whereas by the theory of the Riemann integral, we have that for any $g\Gamma \in G/\Gamma$, since $\cov_{dx} ( D^k_{\mathbb{R}} / g\Gamma) = 1$,
  \begin{align}
    \lim_{\varepsilon \rightarrow 0} \varepsilon^{dk} \Phi_{f_{\varepsilon}}(g\Gamma) =  \lim_{\varepsilon \rightarrow 0} \varepsilon^{dk} \sum_{v \in  \varepsilon g\mathcal{O}^{k} \setminus \{ 0\} }^{} f(v) = \int_{D^k_{\mathbb{R}}}^{} f(x ) dx.
  \end{align}
  Hence, the pointwise limit of $\varepsilon^{dk} \Phi_{f_{\varepsilon}}(g\Gamma) $ is the constant value $\int_{D^k_{\mathbb{R}}}^{}f(x) dx$. Putting this together gives us the required result.
\end{proof}

We will now give detailed proofs of the two given lemmas.

\begin{proof} (of Lemma \ref{le:dominate})

  We will directly prove that $\varepsilon^{dk} \Phi_{f_{\varepsilon}}$ is dominated, under the assumption that $f$ is non-negative everywhere on $D_{\mathbb{R}}^{k}$. We will integrate on $G/\Gamma$ with respect to the Haar measure introduced shortly before. Note that, this measure may not be a probability measure on $G/\Gamma$, but the difference is only that of correction by a constant.

  Recall $K, A^{\mathbb{R}},A^{(1)}, N$ as discussed in Proposition \ref{pr:finite_vol}. Let $M = \sup_{v \in D_{\mathbb{R}}^{k}} |  f(v)|$ and $R > 0$ be $f$ is supported inside $B_{R}(0) \subset D_{\mathbb{R}}^{k}$ (open ball of radius $R$ around 0 with respect to the trace norm).

  Then we get that for some constant $C$, which arises out of the choice of scaling\footnote{In fact, $C$ would be equal to the reciprocal of the volume of $G/\Gamma$ in terms of the measure from Corollary \ref{co:haar}} of the Haar measure on $G/\Gamma$
  \begin{align}
    \int_{G/\Gamma}^{} \varepsilon^{dk} \Phi_{f_{\varepsilon}}(g \Gamma ) dg \le  & \varepsilon^{dk} \int_{\fS^{*}}^{} \left( \sum_{v \in g \mathcal{O}^{k} \setminus \{ 0\}}^{} f_{\varepsilon}(v) \right) dg \\
    \le M  & \varepsilon^{dk} \int_{\fS^{*}}^{} \left(  (g \mathcal{O}^{k}\setminus \{ 0\}) \cap B_{R/ \varepsilon }(0)\right) dg \\
    \le &   M \varepsilon^{dk} \sum_{i=1}^{n}\int_{\fS^{1}}  \#  \left(  (g b_{i}^{-1}\mathcal{O}^{k}) \cap B_{R/ \varepsilon }(0)\right)dg\\
    =    C M \sum_{i=1}^{m}\varepsilon^{dk} \int_{N_{c_2}} & \int_{A_{\omega_1}^{(1)}} \int_{A^{\mathbb{R}}_{c_1}} \int_{K}  \# \left( \N(b_{i})^{\frac{1}{dk}}(\kappa a' a n) b_i^{-1}\mathcal{O}^{k} \cap B_{R/\varepsilon}(0)\right)\ \prod_{i < j}^{} \left( \frac{a'_{ii}}{a'_{jj}} \right)^{d} d\kappa da' da dn .
  \end{align}
  Since $B_{R/\varepsilon}(0)$ is invariant under $K$ ( $\Tr(x^{*} x) = \Tr(x^{*} (\kappa^{*}\kappa) x)$ for $\kappa \in K$, $x \in M_k(D_{\mathbb{R}})$), we know that for any $\kappa \in K$,
  \begin{align}
    \# \left( \N(b_i)^{\frac{1}{dk}} (\kappa a' a n) b_i^{-1}\mathcal{O}^{k} \cap B_{R/\varepsilon}(0) \right) = \# \left( \N(b_i)^{\frac{1}{dk}}(a'an) b^{-1}_i\mathcal{O}^{k} \cap B_{R/\varepsilon}(0) \right).
  \end{align}

  Because of Remark \ref{re:liesinQ}, we know that there exists some $N \in \mathbb{N}$ such that $b_i^{-1} \mathcal{O}^{k} \subseteq \frac{1}{N} \mathcal{O}^{k}$ for every $1 \le i \le n$. Hence, this tells us that 
  \begin{align}
    \# \left( \N(b_{i})^{\frac{1}{dk}}(a'an) b^{-1}_i\mathcal{O}^{k} \cap B_{R/\varepsilon}(0) \right) \le & \#\left(  (a'an) \frac{\N(b_i)^{\frac{1}{dk}}}{N} \mathcal{O}^{k} \cap B_{R/ \varepsilon  }(0) \right)  \\
    = &  \# \left( (a'an) \mathcal{O}^{k} \cap B_{R_{i}/\varepsilon}(0) \right), \text{ where } R_i = \frac{R N}{ \N(b_{i})^\frac{1}{dk}}.
  \end{align}

  Now consider the set $Y= \{ a'an(a')^{-1} \ | \ {a' \in A^{\mathbb{R}}_{c_1}, a \in A^{(1)}_{\omega_1}, n \in N_{c_2}} \} \subseteq G$. For $y \in Y$, note that $y_{ij} = a'_{ii}a_{ii} n_{ij} (a'_{jj})^{-1} $. 
  Hence, $\Tr(y_{ij}^{*} y_{ij}) = \left(\frac{a'_{ii}}{a'_{jj}}\right)^{2}\Tr\left( (a_{ii} n_{ij})^{*} (a_{ii}n_{ij})\right)$.
  Here, $\left( \frac{a'_{ii}}{a'_{jj}} \right)$ is a positive number bounded by $c_1^{j-i}$because of the construction of $A^{\mathbb{R}}_{c_1}$, and the other term is bounded because it continuously depends on $a_{ii} n_{ij}$ which lie in a compact set. Hence, overall the set $Y$ must lie inside a relatively compact set of $G$. Furthermore, the set $Y$ is only dependent on $c_1, c_2$ and $\omega_1$, which are only dependent on $D,\mathcal{O}$ and $k$.

  Now what do we want to do with this set $Y \subseteq G$? So let $R' > 0$ be a radius such that for each index $i$,
  $Y^{-1} B_{R_i}(0) \subseteq B_{R'}(0) \Rightarrow Y^{-1}B_{R_i/\varepsilon}(0) \subseteq B_{R'/\varepsilon}(0)$. Then, we write that 
  \begin{align}
    \# \left( (a'an) \mathcal{O}^{k} \cap B_{R_i/\varepsilon}(0) \right) = & \# \left( (a'an(a')^{-1})a' \mathcal{O}^{k} \cap B_{R_i/\varepsilon}(0) \right) \\
    \le & \# \left( a' \mathcal{O}^{k} \cap Y^{-1} B_{R_i/\varepsilon}(0) \right)  \\
    \le & \# \left( a' \mathcal{O}^{k} \cap B_{R'/\varepsilon}(0) \right).
  \end{align}

 The value of this last expression is equal to the number of integer solutions $(x_1, \dots, x_k) \in \mathcal{O}^{k}$ such that 
  \begin{align}
    \sum_{i=1}^{k}   {a'_{ii}}^{2} \Tr_{D_{\mathbb{R}}}(x_i^{*} x_i) \le \frac{ R'^{2}}{ \varepsilon^{2}}.
  \end{align}

  This is the number of points in a lattice intersecting with some ellipsoid. By considering a bounding cuboid of the ellipsoid, an upper bound for the number of solutions is the following product.
  \begin{align}
    \prod_{i=1}^{k}\#\left\{  x \in \mathcal{O} \ | \ \Tr_{D_{\mathbb{R}}}(x^{*} x) \le \frac{ {R'}^{2} }{ {a'_{ii}}^{2} \varepsilon^{2} }  \right\}.
  \end{align}

  Each term in the product is the number of points in a ball of radius $R'/a_{ii}'\varepsilon$ in a $d$-dimensional $\mathbb{R}$-vector space. Hence, there exist constants $B_1, B_2 > 0$ depending only on $\mathcal{O}, D $ such that 
  \begin{align}
    \#\left\{  x \in \mathcal{O} \ | \ \Tr_{D_{\mathbb{R}}}(x^{*} x) \le \frac{ {R'}^{2} }{ {a'_{ii}}^{2} \varepsilon^{2} }  \right\} \le B_1 + B_2 \left( \frac{R'}{a'_{ii} \varepsilon} \right)^{d} ,
  \end{align}

  and therefore 
  \begin{align}
    & \int_{G/\Gamma} \varepsilon^{dk} \Phi_{f_{\varepsilon}}(g \Gamma) dg  \\ 
    & \le \sum_{i=1}^{m} CM\varepsilon^{dk} \int_{N_{c_{2}}} \int_{A^{(1)}_{\omega_1}} \int_{A^{\mathbb{R}}_{c_1}} \int_{K} \left( \prod_{i=1}^{k} \left( B_1 + B_2\left( \frac{R'}{a'_{ii} \varepsilon} \right)^{d}   \right) \right) \prod_{i < j}^{} \left( \frac{a'_{ii}}{ a'_{jj}} \right)^{d} d\kappa da' da dn \\ 
    & =m CM\int_{N_{c_{2}}} \int_{A^{(1)}_{\omega_1}} \int_{A^{\mathbb{R}}_{c_1}} \int_{K} \left( \prod_{i=1}^{k} \left( B_1 \varepsilon^{d} + B_2\left( \frac{R'}{a'_{ii} } \right)^{d}   \right) \right) \prod_{i < j}^{} \left( \frac{a'_{ii}}{ a'_{jj}} \right)^{d} d\kappa da' da dn
  \end{align}

  Now $\varepsilon \le 1 \Rightarrow  B_{1} \varepsilon^{d} \le B_1$. Therefore, we can bound the integral above by 
  \begin{align}
    & \int_{G/\Gamma} \varepsilon^{dk} \Phi_{f_{\varepsilon}}(g \Gamma) dg  \\ 
    & \le CM\int_{N_{c_{2}}} \int_{A^{(1)}_{\omega_1}} \int_{A^{\mathbb{R}}_{c_1}} \int_{K} \left( \prod_{i=1}^{k} \left( B_1  + B_2\left( \frac{R'}{a'_{ii} } \right)^{d}   \right) \right) \prod_{i < j}^{} \left( \frac{a'_{ii}}{ a'_{jj}} \right)^{d} d\kappa da' da dn
  \end{align}

  This last integral does not contain any appearance of $\varepsilon$. Note that for a decoposition of $g = \kappa a' a n$, the matrix $a'$ is unique. Therefore, some appropriate scaling of the function $g \mapsto  \prod_{i=1}^{k}\left(B_1 + B_2(R' {a'_{ii}}^{-1})^{d}\right)$ on a fundamental domain of $G/\Gamma$ is a dominating function of $\varepsilon^{dk} \Phi_{f_{\varepsilon}}$, if we prove that the integral above is convergent.

  The sets $K, A^{(1)}_{\omega_1}$ and $N_{c_2}$ are compact and hence $\int_{K}{ dk} \int_{N_{c_2}}^{}dn $ and $\int_{A^{(1)}_{\omega_{1}}}da$ are finite. Hence, we just need to show the finiteness of
  \begin{align}\label{eq:integral}
  \int_{A^{\mathbb{R}}_{c_1}}  \left( \prod_{i=1}^{k} \left( B_1  + B_2\left( \frac{R'}{a'_{ii} } \right)^{d}   \right) \right) \prod_{i < j}^{} \left( \frac{a'_{ii}}{ a'_{jj}} \right)^{d}  da' .
\end{align}


Let us first do this for the case $k=2$, that is when $G=SL_2(D_{\mathbb{R}})$ and $\Gamma = SL_2(\mathcal{O})$. In that case, $A^{\mathbb{R}} \simeq \mathbb{R}^{>0}$, and we can parametrize it as $a'_{11}={ a'_{22}}^{-1} = t$. The condition $a_{11}'\le c_1 a'_{22}$ is just saying that $t^{2} \le c_1$. The measure $da'$ is $\frac{1}{t}dt$. So the integral becomes
\begin{align}
  \int_{0}^{\sqrt{c_1}}  \left( B_1  + B_2\left( \frac{R'}{t } \right)^{d}\right) \left( B_1 + B_2(R't)^{d} \right)  t^{2d} \frac{dt}{t}  
\end{align}
which is clearly finite.

For the general $k$, here it goes.
We will use the coordinates of integration from Proposition \ref{pr:finite_vol}. 
Define $y_i =  { a'_{ii}}/{ a'_{i+1,i+1}}$ for $i \in \{ 1,\dots, k-1\}$. Then we have 
\begin{align}
  \begin{bmatrix} 1 & -1 &   &   &   &   \\
    & 1 & -1 &   &   &   \\  
  & & 1 & -1 & & \\
  & &  & \ddots & & \\
& & & & 1  & -1  \\
 1 & 1 & 1 &1  &1  &2  \end{bmatrix}
\begin{bmatrix} \log a'_{11}  \\ \log a'_{22} \\ \log a'_{33} \\   \vdots \\  \\ \log a'_{k-1,k-1} \end{bmatrix} =  \begin{bmatrix} \log y_{1} \\ \log y_{2} \\ \log y_{3} \\ \vdots \\ \\ \log y_{k-1} \end{bmatrix}.
\end{align}

The last row is so because $\sum_{i =1 }^{k-1}\log a'_{ii} = 0$. The inverse of the square matrix above is 
\begin{align}\frac{1}{k}
  \begin{bmatrix} k-1 & k-2& k-3& k-4& & 1 \\
-1  & k-2  & k -3 & k-4 & \dots & 1 \\
 -1 & -2 &k-3 &k-4 &  & 1 \\
 -1 & - 2& -3 &k-4 & & \\
 &\vdots & & & & \vdots \\
 -1& -2  &  -3 & - 4&\dots & 1 \end{bmatrix}  = \left[  \mathbb{I}_{i \le j} -  \frac{j}{k}\right]_{i,j=1}^{k-1} ,
\end{align}
and the determinant is $k$. 

Then we get from the above calculations that 
\begin{align}
  \frac{a'_{ii}}{a'_{jj}} = \prod_{r = i}^{ j-1} y_{r} \Rightarrow \prod_{i < j}^{} \frac{a'_{ii}}{a'_{jj}} = \prod_{j = 1}^{k-1} y_{j}^{j (k-j)} .
\end{align}
From the matrix inverse, we get that for $i \in \{ 1,2,\dots,k-1\}$
\begin{align}
  a'_{ii} = e^{\log a'_{ii}}  = e^{\sum_{j = i}^{ k-1} \log y_j  - \sum_{j = 1}^{ k-1}  \frac{j}{k}\log y_j} =\prod_{ j =i   }^{k-1} y_{j} \prod_{j=1}^{k-1} y_{j}^{-\frac{j}{k}}.
\end{align}
and 
\begin{align}
  a'_{kk} = e^{ - \sum_{j=1}^{k-1} \frac{j}{k}\log y_j }  = \prod_{j=i}^{k-1} y_{j}^{-\frac{j}{k}}.
\end{align}
Finally, the Haar measure $da'$ can be taken to be $\prod_{i=1}^{k-1}\frac{dy_{i}}{y_{i}}$.

Putting this all together, the integral (\ref{eq:integral}) becomes
\begin{align}
  \int_{0 < y_i \le c_1 }^{} \left[ \prod_{i =1  }^{ k -1 } \left( B_1 + B_2{R'}^{d}  \prod_{j=1}^{k-1} y_{j}^{\frac{jd}{k}} \prod_{j=i}^{k-1} y_{j}^{-d} \right) \right] \left( B_1 + B_2 {R'}^{d}\prod_{j=i}^{k-1}y_j^{\frac{jd}{k}} \right)\left(\prod_{j=1}^{k-1} y_{j}^{jd(k-j)}  \right)\prod_{i =1 }^{ k-1} \frac{dy_{i}}{y_{i}}.
\end{align}

Then the finiteness of the integral can be shown by simply chasing the powers of each $y_{i}$ and showing that it is greater than $0$.

Distributing the first product over subsets $I \subseteq \{ 1,2,\dots,k-1\}$ gives us
\begin{align}
  = &  \sum_{I \subseteq \{ 1,2,\dots,k-1\}}^{} 
  \int_{ 0 < y_i \le c_1}\ B_1 ^{k - 1 } 
  \left( \prod_{i \in I}^{} \frac{B_2 {R'}^{d} {\prod_{j=1}^{k-1} y_{j}^\frac{jd}{ k} }}{B_1 {\prod_{j=i}^{k-1} y_j^{d}} } \right) 
  \left(  B_{2} {R'}^{d} {\prod_{j =1 }^{ k- 1}y_j^{\frac{jd}{k}}} + B_1 \right)
  \left(\prod_{j=1}^{k-1} y_{j}^{jd(k-j)}  \right) 
  \prod_{ i =1 }^{ k-1}\frac{d y_i}{y_i} \\ 
  = &  \sum_{I \subseteq \{ 1,2,\dots,k-1\}}^{} 
  \int_{ 0 < y_i \le c_1}\ B_1 ^{k - 1 } 
  \frac{ \left( B_2 {R'}^{d} \prod_{j=1}^{k-1} y_{j}^\frac{jd}{ k} \right)^{\# I }}{ B_1^{\# I } {\prod_{j=1}^{k-1} y_j^{d \left( \# I_{\le j} \right)}} }  
  \left(  B_{2} {R'}^{d} {\prod_{j =1 }^{ k- 1}y_j^{\frac{jd}{k}}} + B_1 \right)
  \left(\prod_{j=1}^{k-1} y_{j}^{jd(k-j)}  \right) 
  \prod_{ i =1 }^{ k-1}\frac{d y_i}{y_i} .
\end{align}
where we have $I_{\le j} = \{  i \in I \ | \ i \le j \} $. Now in the above expression, for each $I \subset \{ 1,2,\dots, k-1\}$ we have an integration of a sum of two products of some powers of $y_{j}$ and some constant. If we prove that the power of $y_{j}$ in each of those terms is $\ge 0$, then we are done. Note that the power of a $y_j$ for $j \in \{ 1,2,\dots, k-1\}$ in the two summands would be 
\begin{align}
\frac{jd}{k}(\# I ) - d (\# I_{\le j})  + \frac{jd}{k} + jd(k-j) - 1,
\end{align}
and
\begin{align}
  \frac{jd}{k} (\# I) - d (\# I_{\le j}) + jd(k-j) - 1.
\end{align}

It is sufficient to show that the latter is $\ge 0$ for each $I \subseteq \{ 1,2,\dots,k-1\}$ and for each $j$. Rewriting that last expression as 
\begin{align}
  d \left( \frac{j}{k} (\# I) -  (\# I_{\le j}) + j(k-j)\right) - 1.
\end{align}

 

Hence the finiteness of the integral now clearly follows from proving that 
\begin{align}
  (\# I)\frac{j}{k} + j (k-j) - (\# I_{\le j}) \gneq  0 \Leftrightarrow (k-j) \gneq \frac{\# I_{\le j}}{ j}   - \frac{\# I}{k}.
\end{align}
The inequality is indeed true. Combinatorially $\# I_{\le j} \le j$ and $\# I \le k -1 \lneq k $ so the difference $ \left| \frac{\# I_{\le j}}{ j}   - \frac{\# I}{k}\right| \lneq 1$. On the other hand since $j \in \{ 1,2,\dots, k-1\}$, we must have $k-j \ge 1$.

\end{proof}

\begin{proof} (of Lemma \ref{le:epsind}).
  The strategy here is try to exchange the summation over lattice points with the integral over $G$ in our expression $\prod_{G/\Gamma}^{}(\sum_{v \in g\mathcal{O}^{k} \setminus \{ 0\}}^{} f(v ) )dg$. This will obtain Theorem \ref{th:siegel} up to a constant.

  Note that, $\Gamma$ is precisely the set of linear transformations in $G$ that preserve the lattice $\mathcal{O}^{k} \subseteq D_{\mathbb{R}}^{k}$. Now consider the orbit set 
  \begin{align}
    \Gamma \backslash \mathcal{O}^{k}  = \{ [v] := \Gamma v \ | \ v \in \mathcal{O}^{k}\}.
  \end{align}
For any $g \in G$ we also have that $ g\Gamma g^{-1}$ is the group of symmetries in $G$ that preserve $g \mathcal{O}^{k}$ and 
$$  g \Gamma g^{-1} \backslash g \mathcal{O}^{k}   = g \left( \Gamma \backslash \mathcal{O}^{k}  \right)=\{ g [v] \ | \  [v] \in \Gamma \backslash \mathcal{O}^{k}\}.$$

  With all this, we have that
  \begin{align}
    \int_{G/\Gamma}\left( \sum_{v \in \mathcal{O}^{k}\setminus \{ 0\}}^{} f(v)\right) dg   & = \int_{G/ \Gamma } \left( \sum_{ [v] \in ({\Gamma }\backslash {\mathcal{O}^{k}}) \setminus \{ [0]\} } \ \  \sum_{ y\in [v]}^{} f(g y )  \right) d g\\
    & = \sum_{[v] \in \left( { \Gamma}\backslash {\mathcal{O}^{k}} \right) \setminus \{[0]\} } \int_{G/ \Gamma }^{} \sum_{y \in [v]}^{} f(gy) dg.
  \end{align}

  The above interchange of the sum with integral can be justified via the dominated convergence theorem, since the partial sums are dominated by $ \Phi_{|f|}$ which is integrable by the previous lemma.
  Now since $[v]= \Gamma v \simeq \Gamma / \Gamma_{v}$, where $\Gamma_{v}$ is the stabilizer subgroup of $v \in [v]$ in $\Gamma$, we can write that 
  \begin{align}
    \sum_{[v] \in\left( { \Gamma}\backslash{\mathcal{O}^{k}} \right) \setminus \{[0]\} } \int_{G / \Gamma }\sum_{y \in [v]}^{} f(gy) dg & = \sum_{[v] \in \left( { \Gamma}\backslash{ \mathcal{O}^{k} }\right) \setminus \{ [0]\}} \int_{ G/ \Gamma } \sum_{  h\Gamma_{v}\in \Gamma/\Gamma_{v}}f(g h v) dg \\
    & = \sum_{[v] \in \left( {\Gamma}\backslash{ \mathcal{O}^{k}} \right) \setminus \{ [0]\}} \int_{ G/ \Gamma_{v} }f(gv) dg 
    .\label{eq:whatmeasure}
  \end{align}

  The second equality above merits some explanation. $\Gamma_{v}$ is a discrete subgroup of $G$ and is also unimodular. Hence, there is a unique scaling of a $G$-invariant measure on $G/\Gamma_{v}$ that agrees with the measure given as $$F \mapsto \int_{G/\Gamma}^{} \left( \sum_{[h] \in \Gamma /\Gamma_{v}}^{} F(gh)  \right)dg , \mbox{ for }F\in C_{c}(G/\Gamma_v) .$$
  The measure $dg$ in (\ref{eq:whatmeasure}) refers to this measure which ``unfolds the integral''.

  Now with $G_{v}$ being the stabilizer subgroup of $v$ in $G$ there is a homeomorphism\footnote{This is because $D_{\mathbb{R}}^{k}$ is a locally compact space and the orbit $Gv$ is also locally compact.} $G/G_{v} \simeq D_{\mathbb{R}}^{k}\setminus \{ 0\}$ given by $g G_v \mapsto  gv $. Note that, this works out because $G$ acts transitively on $D^{k}_{\mathbb{R}} \setminus \{ 0\}$. Furthermore, the Lebesgue measure on $D_{\mathbb{R}}^{k} \setminus \{ 0\}$ induces a $G$-invariant measure on $G/G_{v}$ implying that $G_v$ is also unimodular. In particular, $G_{v}/\Gamma_{v}$ carries a unique (up to scaling) $G_{v}$-invariant measure, since $\Gamma_v$ being a discrete group must also be unimodular. With all this in place, we can unwind the integral in (\ref{eq:whatmeasure}) again and write
\begin{align}
\int_{G/\Gamma}{\Phi_{f}(g \Gamma) dg} = \sum_{[v] \in \left( \Gamma \backslash \mathcal{O}^{k} \right) \setminus \{ [0]\}}^{} \int_{G/G_{v}}^{} \int_{G_v / \Gamma_{v}} f(h_2 h_1 v ) dh_1 dh_2,
\end{align}
where $dh_1$ and $dh_2$ are scaled appropriately for the equality to make sense. But $h_1 v = v$, since $h_1 \in G_v$ and hence
\begin{align}
  \int_{G/\Gamma}^{ } \Phi_{f}(g \Gamma)d \Lambda = \sum_{[v]\in \left( {\Gamma}\backslash{ \mathcal{O}^{k} } \right) \setminus  \{ 0\}} \left( \int_{G / G_{v}} f(h_2 v) dh_2\right) \left( \int_{G_{v} / \Gamma_{v}} dh_1  \right).
\end{align}

Now since $G/G_v \simeq V \setminus \{ 0\}$, the integral $\int_{G/G_{v}}^{} f(h_1 v) dh_1 = c_v \int_{D^{k}_{\mathbb{R}} \setminus \{ 0\}}^{} f(x) dx $ for some $c_v > 0$. On the other hand, because we know that this integral is absolutely convergent by the previous lemma, we must have another positive constant $ 0 < c'_v = \int_{G_{v}/\Gamma_{v}}^{} dh_2 = \vol_{dh_2}(G_{v}/\Gamma_{v})  < \infty$ (since if the volume is infinite, the integral on the left wouldn't be finite). Hence, we finally obtain that 

\begin{align}
  \int_{G/\Gamma}^{ } \Phi_{f}(g \Gamma)d g & = \sum_{[v]\in \left( \Gamma \backslash \mathcal{O}^{k} \right) \setminus \{ [0]\}} c_v c'_{v} \int_{ D^{k}_{\mathbb{R}} \setminus \{ 0\}} f(x ) dv \\
& =  \int_{D^{k}_{\mathbb{R}}}^{ } f(x) dx  \left( \sum_{[v] \in \left( {\Gamma}\backslash { \mathcal{O}^{k} }\right) \setminus \{ [0]\}} c_v c'_v \right)\\
& = C \int_{D^{k}_{\mathbb{R}}}^{} f(x) dx .
\end{align}
Here $C$ is a constant which must be finite since the integral is. In fact, $C = 1$, but that's not important for the conclusion of the lemma.  Since $\int_{D^{k}_{\mathbb{R}}}^{} f_{\varepsilon}(x) dx = \varepsilon^{-dk } \int_{D_{\mathbb{R}}^{k}}^{}f(x)dx$, the result follows from simple rearranging.
\end{proof}
\section{Lower bounds on lattice packing efficiency}
\subsection{Overall strategy}
\label{se:strategy}

The main idea that we will employ is in the form of the following proposition.

\begin{proposition}\label{pr:bound}
  Let $\mathcal{O} \subseteq D$ be an order in a division algebra and let $G_0 \subseteq D$ be a finite multiplicative subgroup of $\mathcal{O}$. Then for any $\varepsilon > 0$, there exists a lattice packing in dimensions $d= 2\dim_{\mathbb{Q}} D$ whose packing efficiency is at least $\frac{1}{2^{d}} ( \#G_{0} )-\varepsilon $.
\end{proposition}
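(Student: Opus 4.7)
The plan is to apply Siegel's mean value theorem (Theorem \ref{th:siegel}) with $k = 2$, so that $D_{\mathbb{R}}^{2}$ is a real vector space of dimension $d = 2\dim_{\mathbb{Q}}D$. Any lattice $g\mathcal{O}^{2}$ with $g \in G = SL_{2}(D_{\mathbb{R}})$ then has covolume $1$ with respect to the Lebesgue measure $dx$ supplied by that theorem. Since the elements of $G_{0}$ are units of $\mathcal{O}$, the diagonal right-action $(v_{1}, v_{2}) \cdot \zeta = (v_{1}\zeta, v_{2}\zeta)$ preserves $\mathcal{O}^{2}$, and because this action commutes with left multiplication by $g$, it also preserves every $g\mathcal{O}^{2}$.

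The key division-algebra input is that $G_{0}$ acts freely on $D_{\mathbb{R}}^{2} \setminus \{0\}$. Indeed, if $(v_1, v_2)\zeta = (v_1, v_2)$ with, say, $v_1 \neq 0$, then $v_1(\zeta - 1) = 0$; but if $\zeta \neq 1$ then $\zeta - 1 \in D$ is non-zero and therefore invertible in $D \subseteq D_{\mathbb{R}}$, forcing $v_1 = 0$, a contradiction. Consequently, for any $G_{0}$-invariant subset $S \subseteq D_{\mathbb{R}}^{2} \setminus \{0\}$, the cardinality $\#((g\mathcal{O}^{2}\setminus\{0\}) \cap S)$ is a non-negative integer multiple of $\#G_{0}$.

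Next I would fix a $G_{0}$-invariant inner product on $D_{\mathbb{R}}^{2}$ (obtained by averaging any inner product over the finite group $G_{0}$) and let $B_{r}$ denote the open ball of radius $r$ in this inner product. Then $f = \mathbb{1}_{B_{r}}$ is a $G_{0}$-invariant, compactly supported, Riemann integrable function, so Theorem \ref{th:siegel} together with Remark \ref{re:siegelf} gives
\begin{align}
    \int_{G/\Gamma} \#\bigl( (g\mathcal{O}^{2}\setminus\{0\}) \cap B_{r}\bigr)\, dg = \vol_{dx}(B_{r}).
\end{align}
I would then choose $r$ so that $\vol_{dx}(B_{r}) = \#G_{0} - \delta$ for some fixed $\delta \in (0, 2^{d}\varepsilon)$. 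Since the right-hand side is strictly less than $\#G_{0}$, the integrand must drop strictly below $\#G_{0}$ on a set of positive probability, and on that set it is a non-negative multiple of $\#G_{0}$ and therefore equals $0$. This produces a lattice $\Lambda = g\mathcal{O}^{2}$ of covolume $1$ whose minimum distance is at least $r$; the associated packing with balls of radius $r/2$ then has density $\vol_{dx}(B_{r/2}) = 2^{-d}\vol_{dx}(B_{r}) = 2^{-d}(\#G_{0} - \delta) \ge 2^{-d}\#G_{0} - \varepsilon$.

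No step here is genuinely hard once Siegel's mean value theorem is in hand. The two minor points requiring care are (i) choosing the Euclidean structure on $D_{\mathbb{R}}^{2}$ to be $G_{0}$-invariant, so that $B_{r}$ is a $G_{0}$-stable set and the orbit-counting argument yields a multiple of $\#G_{0}$, and (ii) noting that packing density is independent of the normalization of Lebesgue measure, so one may legitimately compute it with the same $dx$ appearing in Theorem \ref{th:siegel} and avoid tracking any extra scaling constants.
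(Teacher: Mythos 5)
Your proof is correct and follows essentially the same route as the paper: apply Theorem \ref{th:siegel} with $k=2$ to the indicator of a $G_0$-invariant ball, observe that the lattice sum is a non-negative multiple of $\#G_0$ because $G_0 \subseteq \mathcal{O}$ acts freely on $D_{\mathbb{R}}^2\setminus\{0\}$ by right multiplication, and conclude from the strict inequality. You are somewhat more explicit than the paper about the freeness of the action (via invertibility of $\zeta-1$ in $D$) and about the normalization of the measure, but the argument is the same.
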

\begin{proof}
  What we will show is that there exists a positive definite quadratic form on $D_{\mathbb{R}}^{2}$ and a unit covolume lattice $\Lambda_0$ (with respect to this quadratic form), such that for some ball $B_{R}(0)$ in this quadratic form having a volume $\# G_{0}- \varepsilon$, the lattice and the ball intersect only at $\{ 0\}$. If we prove this, then we get that the balls $B_{R/2}(v_1),B_{R/2}(v_2)$ are disjoint for any distinct $v_1,v_2 \in \Lambda_0$ and hence $\bigsqcup_{v \in \Lambda_0} B_{R/2}(v)$ forms a lattice packing whose packing efficiency will be 
  \begin{align}
    \frac{\vol B_{R/2}(0)}{\vol(D^{2}_{\mathbb{R}} / \Lambda_{0})} = 2^{-d} (\# G_{0} - \varepsilon ).
  \end{align}

  Consider the left-action of $G_{0}$ on $D_{\mathbb{R}}^{2}$ via $g.(v_1,v_2) = (v_1g^{-1},v_2g^{-1})$. This action is $\mathbb{R}$-linear and therefore it is possible to start with any positive-definite quadratic form on $D_{\mathbb{R}}^{2}$ and average over $G_{0}$ and make it $G_{0}$-invariant. After appropriate scaling, the lattice $\mathcal{O}^{2}$ will have a unit covolume with respect to the measure induced by this form. We fix this as the form on $D^{2}_{\mathbb{R}}$ as mentioned above.

  Now let $B_{R}(0)$ be the ball of volume $\# G_{0} - \varepsilon$ and let $f$ be the indicator function of $B_{R}(0)$. Then, we get from Theorem \ref{th:siegel} and Remark \ref{re:siegelf}
  \begin{align}
    \int_{G / \Gamma}\left( \sum_{v \in g \mathcal{O}^{2} \setminus \{ 0\}} f(v) \right) dg =  \int_{D_{\mathbb{R}}^{2}}^{} f(x) dx = \# G_{0} - \varepsilon.
  \end{align}

  However, note that for any $g \in G$, the lattice $g \mathcal{O}^{2}$ is $G_{0}$-invariant under the left-action defined above. Furthermore, the $G_{0}$-orbit of any non-zero element of $\mathcal{O}^{2}$ is of size $\# G_{0}$ because $\mathcal{O}^{2}$ and $G_{0}$ are made of elements of the division algebra $D$. Therefore, $\sum_{v \in g \mathcal{O}^{2} \setminus \{ 0\}}^{} f(v)$ lies in $\{ \#G_{0}, 2( \# G_{0}), 3(\# G_{0}), \dots \}$. Since the average is strictly less than $\# G_{0}$, we get that for some $g_0 \in G$, $\Lambda_0 = g_0 \mathcal{O}^{2} \cap B_{R}(0) = \{ 0\}$ and this is the required lattice.
\end{proof}

That $\varepsilon$ in the above lower bound can be gotten rid of by using Mahler's compactness theorem. 
\begin{theorem}\label{pr:boundbetter}
  Let $(G_{0},\mathcal{O}, D ) $ be as in Proposition \ref{pr:bound}. Then there exists a lattice packing in dimensions $d= 2\dim_{\mathbb{Q}} D$ whose packing efficiency is at least $\frac{1}{2^{d}} ( \#G_{0} )$.
\end{theorem}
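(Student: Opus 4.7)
The plan is to produce a sequence of unit-covolume lattices that almost achieve the bound and then extract a limit. First, I would fix the $G_{0}$-invariant inner product on $V := D_{\mathbb{R}}^{2}$ used in the proof of Proposition~\ref{pr:bound} and, for each integer $n \geq 1$, apply that proposition with $\varepsilon = 1/n$ to produce a unit-covolume lattice $\Lambda_{n} = g_{n} \mathcal{O}^{2} \subseteq V$ whose shortest nonzero vector has length $\geq r_{n}$, where $r_{n}$ is the radius with $\mu(B_{r_{n}}(0)) = \#G_{0} - 1/n$. Since $r_{n}$ increases to the radius $r$ determined by $\mu(B_{r}(0)) = \#G_{0}$, these shortest-vector lengths are uniformly bounded below by $r_{1} > 0$.

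Next, I would invoke Mahler's compactness theorem in the space of unit-covolume lattices of the Euclidean space $V$: the subset of lattices whose shortest nonzero vector has length at least $r_{1}$ is compact. Passing to a subsequence, I may therefore assume $\Lambda_{n_{k}} \to \Lambda$ for some unit-covolume lattice $\Lambda \subseteq V$.

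The heart of the argument, and what I expect to be the main (though essentially standard) obstacle, is verifying that the open ball $B_{r}(0)$ still intersects $\Lambda$ only at the origin. I would proceed by contradiction: if some $v \in \Lambda \setminus \{0\}$ satisfied $|v| < r$, the convergence $\Lambda_{n_{k}} \to \Lambda$ would furnish vectors $v_{n_{k}} \in \Lambda_{n_{k}}$ with $v_{n_{k}} \to v$, and for $k$ large both $|v_{n_{k}}| < \tfrac{1}{2}(|v| + r)$ and $r_{n_{k}} > \tfrac{1}{2}(|v| + r)$ would hold, contradicting the defining property of $\Lambda_{n_{k}}$. One must confirm that the notion of lattice convergence in Mahler's theorem really delivers such approximating vectors, but this is standard from the chabauty-type topology on the space of closed subgroups of $V$.

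Once this is established, the collection $\{B_{r/2}(w)\}_{w \in \Lambda}$ is a lattice packing of $V$ with packing efficiency
\begin{align}
\frac{\mu(B_{r/2}(0))}{\mu(V/\Lambda)} = 2^{-d}\mu(B_{r}(0)) = 2^{-d}\,\#G_{0},
\end{align}
which is the desired bound.
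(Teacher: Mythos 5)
Your proposal is correct and uses essentially the same approach as the paper: construct a sequence of lattices from Proposition~\ref{pr:bound} with $\varepsilon = 1/n$, invoke Mahler's compactness to extract a convergent subsequence, and verify the limit still works. The paper phrases Mahler's compactness intrinsically on $G/\Gamma$ (so the limit lattice is automatically of the form $g\mathcal{O}^2$, hence $G_0$-invariant, a fact used later in Theorem~\ref{th:improv}) and appeals to continuity of the packing efficiency function, whereas you work in the space of all unit-covolume lattices and verify the open-ball condition directly by contradiction; both suffice for the statement as written, and your verification of the non-intersection at the limit is the small detail the paper leaves implicit.
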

\begin{proof}
  Let $\Lambda_n = g_n \mathcal{O}^{2}$ be a unit covolume lattice in $D_{\mathbb{R}}^{2}$ whose packing efficiency is better than $\frac{1}{2^{d}}(\# G_{0}) - \frac{1}{n}$. Since all $\Lambda_n$ are unit covolume and whose packing efficiency is bounded below, we get from Mahler's compactness that up replacing $g_n$ with $g_n \gamma_n$ for some $\gamma_{n} \in \Gamma$, we can force $\{ g_n\}_{n \ge 1}$ to be a relatively compact set in $G$ and therefore it contains a convergent subsequence converging to some point $g \in G$. Since packing efficiency is a continuous function on $G/\Gamma$, we get that $g \mathcal{O}^{2}$ is the required lattice.
\end{proof}

Hence, this gives us a methodology of procuring lower bounds for lattice packings. Any tuple $(G_{0},\mathcal{O},D)$ gives us a packing from Proposition \ref{pr:bound} gives us a valid lower bound for the sphere packing problem in dimension $d= 2 \dim_{\mathbb{Q}} D$, i.e. $c_{d} \ge |G_0|$.

\begin{example}
  For $n\ge 3$, put $ D = \mathbb{Q}(\mu_{n})$, and $\mathcal{O} \subset D $ as its ring of integers, and $G_{0} = \langle \mu_{n}\rangle \simeq {\mathbb{Z}}/{n \mathbb{Z}}$. Hence, in dimension $d = 2 \varphi(n) $, there is a lattice packing of packing efficiency at least $2^{-d}\# G_{0} =2^{-d} n$. This gives us the lower bound in \cite{AV}.
\end{example}

Note that the following ``tightening'' can be done once we have a tuple $(G_{0},\mathcal{O},D)$. When $D$ is a $\mathbb{Q}$-division algebra, the $\mathbb{Q}$-span of $G_{0}$ in $D$ is also a division algebra. Indeed, denote $\mathbb{Q}\langle G_{0}\rangle  \subseteq D$ as the span\footnote{Caution: This is not the group algebra of $G_{0}$. The group algebra of $G_{0}$ over $\mathbb{Q}$ will almost never be a division algebra. More precisely, this is the image of the group algebra under $\mathbb{Q}[G_{0}] \rightarrow D$ induced from the inclusion $G_{0} \hookrightarrow D$}  of $G_{0}$, then any $\gamma \in \mathbb{Q}\langle G_{0}\rangle $ is an invertible $\mathbb{Q}$-map therefore it will map $\mathbb{Q}\langle G_{0}\rangle $ to itself under left-multiplication and therefore must map something to $1_{D}$. Let $\mathbb{Z}\langle G_{0}\rangle  \subseteq \mathcal{O}$ be the $\mathbb{Z}$-span of $G_{0}$, then we get that $(G_{0},\mathbb{Z}\langle G_{0}\rangle ,\mathbb{Q}\langle G_{0}\rangle )$ is another tuple that fits in Proposition \ref{pr:bound}.

Clearly, $\dim_{\mathbb{Q}}\mathbb{Q} \langle G_{0} \rangle \le \dim_{\mathbb{Q}} D $. Therefore, we can get a packing in smaller dimension without losing the packing efficiency. Hence, to get tighter packings it is sufficient to consider the case where the $\mathbb{Q}$-span of $G_{0}$ is precisely $D$. $\mathcal{O}$ can then be taken to be the $\mathbb{Z}$-span of $G_{0}$.

This tightening also shows why it was optimal to consider cyclotomic fields in \cite{AV}. If the division algebra is a general number field $K$ and $G_0 \subseteq K^{*}$ is the group of torsional units, then by Dirichlet's unit theorem we have that $G_0 = \langle \mu_{m}\rangle$ for some $m \in \mathbb{Z}_{> 1}$ and $\mathbb{Q}\langle G_0\rangle = \mathbb{Q}(\mu_{m})$ would be a cyclotomic field.

\label{se:overall}

  \subsection{Cyclic division algebras}\label{ss:CDA}

  This section is going to be a review of cyclic division algebras.

  We know that the Frobenius theorem allows only three finite dimensional $\mathbb{R}$-division algebras, namely $\mathbb{R}, \mathbb{C}$ and $\mathbb{H}$. The only non-trivial and non-commutative extension of $\mathbb{R}$ is $\mathbb{H}$. However, over $\mathbb{Q}$, the story is completely different. There are infinitely many finite dimensional $\mathbb{Q}$-division algebras apart from the finite field extensions of $\mathbb{Q}$. All of these division algebras have the form of a cyclic division algebra. 
  For a thorough introduction, one can refer to \cite{NJ2009}, for instance. 

  We define a cyclic $\mathbb{Q}$-division algebra as the quadruplet $D = (E,F,\sigma, \gamma)$, where 
  \begin{enumerate}
    \item $F$ is a number field over $\mathbb{Q}$, 
    \item $E/F$ is a cyclic extension of degree $n$, i.e. the field extension $E/F$ is Galois and the Galois group is cyclic,
    \item $\sigma$ is a generator of the cyclic group $\Gal(E/F)$ and 
    \item $\gamma \in F^{*}$, with the property that the multiplicative order of $\gamma$ in the group $K^{*}/ N_{F}^{E}(E^{*})$ is exactly $n$. That is, $\gamma^{k} \notin N_{F}^{E}(E^{*})$ for any $k \in \{ 1,2,\dots,n-1\}$ and $\gamma^{n} = N_{F}^{E}(x)$ for some $x\in E^{*}$\label{it:nonnorm}. When this happens we say that $\gamma \in F^{*}$ is a non-norm element. Note that $\gamma^{n} = \N^{E}_{F}(\gamma)$.
  \end{enumerate}

  Consider a formal element $b$ that does not commute with $E$ and satisfies $b^{n} = \gamma$. $D$ is now defined as per the isomorphism
  \begin{align} \label{eq:iDentify} D \simeq E \oplus E b \oplus E b^{2} \oplus \dots \oplus E b^{n-1},\end{align}
  with the rule that 
  \begin{align}
    bl =  \sigma(l)b \text{ for all }l \in E.\label{eq:ruleofD}
  \end{align}
  If we identify $D \simeq E^{n}$ according to the identification (\ref{eq:iDentify}), then for $g = (g_0, g_1, \dots, g_{n-1})$ we observe that for some $x_0 \in E$ we get the following from repeatedly using Equation \ref{eq:ruleofD}.
  \begin{align}
    g b  = &  (g_0 + g_1 b + g_2 b^{2} + \dots + g_{n-1} b^{n-1}) x_0  \\ \\
    = & \begin{bmatrix}  x_0 &   &    &    &   & \  &   &  \\
      & \sigma(x_0) &   &   &   &   &       &    \\
      &  & \sigma^{2}(x_0)  &   &    &   &      &   \\
      &   &  & \sigma^{3}(x_0)  &   &   &   &    \\
      &   &   &  & \sigma^{4}(x_0) &   &   &    \\
    \  &   &   &   &   & \ddots  &  &    \\
    &   &   &   &   &  &  &   \sigma^{n-1}(x_0)
  \end{bmatrix}
  \begin{bmatrix} g_0 \\ g_1 \\  \vdots\\ \\  \\  \\ g_{n-1} \end{bmatrix},
  \end{align}
 whereas multiplying by $b$ on the right looks like
  \begin{align}
    g b  = &  (g_0 + g_1 b + g_2 b^{2} + \dots + g_{n-1} b^{n-1}) b  \\ \\
    = & \begin{bmatrix}    &   &    &    &   & \  &   & \gamma \\
    1 &   &   &   &   &   &       &    \\
      & 1 &   &   &    &   &      &   \\
      &   & 1 &   &   &   &   &    \\
      &   &   & 1 &   &   &   &    \\
    \  &   &   &   &   & \ddots  &  &    \\
      &   &   &   &   &  & 1 &    
  \end{bmatrix}
  \begin{bmatrix} g_0 \\ g_1 \\  \vdots\\ \\  \\  \\ g_{n-1} \end{bmatrix}.
  \end{align}

  Extending this to the right multiplication by some $y = y_0 + y_1 b + \dots  + y_{n-1}{b^{n-1}}$, we write that 
  \begin{align}
    g y  = &    g (y_0 + y_1 b + y_2 b^{2} + \dots + y_n b^{n-1} ) \\ =  
    &  g( y_0 + b \sigma^{-1}(y_{1}) + b^{2} \sigma^{-2}(y_2) + b^{3} \sigma^{-3}(y_{3})+ \dots + b^{n-1} \sigma^{-n+1}(y_{n-1}) ) \\ \\ 
    =  & \begin{bmatrix}  
      y_{0}  			& \gamma \sigma(y_{n-1})  &  \gamma \sigma^{2}(y_{n-2})  &  \gamma \sigma^{3}(y_{n-3})   &  \  & \gamma \sigma^{n-2}(y_{2})   & \gamma \sigma^{n-1}(y_1)\\
      y_{1} 		&\sigma(y_{0}) 	& \gamma \sigma^{2}(y_{n-1})  &  \gamma \sigma ^{3}(y_{n-2})  &  &  \gamma \sigma^{n-2}(y_{3})     & \gamma \sigma^{n-1}(y_{2})    \\
      y_{2}		& \sigma(y_{1}) 	&  \sigma^{2}( y_{0}) 		&  \gamma \sigma^{3}(y_{n-1})  &  \dots &  \gamma \sigma^{n-2}(y_{4})  &  \gamma \sigma^{n-1}(y_{3}) \\
      y_{3}		&  \sigma(y_{2})  & \sigma^{2}(y_{1}) & \sigma^{3}(y_{0})  	&   & \gamma \sigma^{n-2}(y_{5}) & \gamma \sigma^{n-1}(y_{4})    \\
      y_{4} 	&  \sigma(y_{3}) &   \sigma^{2}(y_2)& \sigma^{3}(y_1)  &   &  \gamma \sigma^{n-2}(y_{6})  &  \gamma \sigma^{n-1}(y_{5})  \\
    \  				&\vdots   &   &   &    \ddots  &  &  &    \\
    y_{n-1} & \sigma(y_{n-2})   & \sigma^{2}(y_{n-3})  & \sigma^{3}(y_{n-4}) &  & \sigma^{n-2}(y_1) &    \sigma^{n-1}(y_{0})
  \end{bmatrix}
  \begin{bmatrix} g_0 \\ g_1 \\  \vdots\\ \\  \\  \\ g_{n-1} \end{bmatrix}.
  \end{align}
  Since this is a matrix representation of the right multiplication, we get from the above matrix a map $D^{\opp} \rightarrow M_{n}(E)$. 
  
  Clearly, $F$ lies in the center $\mathcal{Z}(D)$. In fact, after some matrix computations, one can see that $F$ is the center. From the identification (\ref{eq:iDentify}), it is clear that $\dim_{F}(D) = n^{2}$.
  
  \begin{remark} \label{re:nonnorm}
  If only the first three condition are satisfied in the definition without the condition \ref{it:nonnorm}, then we simply call $D$ a cyclic $\mathbb{Q}$-algebra. A cyclic $\mathbb{Q}$-algebra is a division algebra if and only if \ref{it:nonnorm} is satisfied. That is $(E,F,\sigma,\gamma)$ is a division algebra if and only if $\gamma$ is a non-norm element.
  \end{remark}

    \subsection{Amitsur's results}

    The problem of finding groups that can be embedded in division algebras was completely solved by Amitsur in his work \cite{Amit55}. Here we summarize the findings therein.
    

   Consider the following notation. 
   \begin{itemize}
     \item $m,r \in \mathbb{N}$ are two coprime integers.
     \item $n = \ord_{m} r$ is the multiplicative order or $r$ modulo $m$, that is the smallest positive integer $k$ such that $m  \mid r^{k}-1$.
       \item $s = \gcd(r-1,m)$. 
       \item $t = m/s$.
   \end{itemize}

   When $r=1$, we will assume $n=s=1$. In these definitions, we will think of $m$ and $r$ as two parameters and $n,s,t$ will automatically be set as defined above.

   With this, consider the cyclic algebra $\mathfrak{U}_{m,r} = (\mathbb{Q}(\mu_{m}), F , \sigma_{r}, \mu_{m}^{t} )$, where $F$ is the subfield of $\mathbb{Q}(\mu_{m})$ fixed by $\sigma_{r}$, and $\sigma_{r}$ is the field automorphism of $\mathbb{Q}(\mu_{m})$ given by $\mu_{m} \mapsto \mu_{m}^{r}$. 
   A priori, $\mathfrak{U}_{m,r}$ is just a $\mathbb{Q}$-algebra which may not be a division algebra. For this to be a division algebra, we want that $\mu_{m}^{t}$ is a non-norm element of $F$. 
   
   We can find out the dimension of $\mathfrak{U}_{m,r}$ as follows, $\dim_{\mathbb{Q}} \mathfrak{U}_{m,r} = n \dim_{\mathbb{Q}} E = n \varphi(m)=\varphi(m) \ord_{m} r $.

   Define $G_{m,r}$ to be the group given as 
   \begin{align}
     \langle A,B \ | \ A^{m} = 1, B^{n} = A^{t}, BAB^{-1} = A^{r}\rangle.
   \end{align}
   We get that $\# G_{m,r} = mn = m \ord_{m}r$. When $r=1$, $G_{m,r}$ is a cyclic group of order $m$.

   Consider the map $i:G_{m,r} \rightarrow \mathfrak{U}_{m,r}^{*}$ defined sending $A \mapsto \mu_{m}$ and $B \mapsto b$ (recall, $b \in \mathfrak{U}_{m,r}$ was a formal element such that Equation (\ref{eq:iDentify}) holds. Using Equation (\ref{eq:ruleofD}), we can conclude that this is a group homomorphism. It is injective, whether or not $\mathfrak{U}_{m,r}$ is a division algebra. 
   
   To find out whether or non $\mathfrak{U}_{m,r}$ is a division algebra amounts to checking whether or not $\gamma = \mu_{m}^{t}$ is a non-norm element, as mentioned in Remark \ref{re:nonnorm}. This can be done through the use o
   Hasse's local-global principles on the cyclotomic field $\mathbb{Q}(\mu_m)$; an element $\gamma \in F^{*}$ is a norm globally if and only if it always a norm locally. Doing this would yield some equivalent conditions on the numbers $m,r$. The following is Theorem 4 from \cite{Amit55} obtained from this method, stated here after being combined with Lemma 10 from that paper. Below, the notation $\ord_{a}b$ means the smallest positive power $k$ such that $a^{k} \equiv 1 \pmod{b}$.
   \begin{theorem} {\bf (Amitsur, 1955)}\\
     Consider the following conditions on the numbers $m,r,n,s,t$ defined above. Then $\mathfrak{U}_{m,r}$ is a division algebra if and only if both \ref{it:or1} and \ref{it:or2} given below hold.
     \begin{enumerate}
       \item \label{it:or1}
	 One of the following two conditions hold.
	 \begin{enumerate}
	   \item $\gcd(n,t) = 1$. This implies that $\gcd(s,t) = 1$.\label{it:or1co1}
	   \item
	     \label{it:or1co2}
	     $n=2n', m=2^{\alpha} m',s=2s'$, for some $\alpha \ge 2$ and $m',s',n'$ are odd numbers, such that $\gcd(n,t)=\gcd(s,t)=2$ and $ 2 ^{\alpha} \mid (r + 1) $. 
	 \end{enumerate}
       \item 
	 One of the following two conditions hold.
	     \label{it:or2}
	 \begin{enumerate}
	   \item $n=s=2$ and $ m \mid (r +1)$. \label{it:or2co1}
	   \item For every prime $ q \mid n$ there exists a prime $p \mid m$ such that if $m = p^{\alpha} m'$ with $p \nmid m'$, we get $q \nmid \ord_{m'} r $. In addition, at least one of the following must hold regarding $p,q$.
	     \begin{enumerate}
	       \item $p \neq 2$ and $\gcd(q,\frac{ p^{ \delta} - 1}{s} ) = 1$, where $\delta = \ord_{m'} p$.
	       \item $p=q=2$ and $m/4 \equiv \delta \equiv 1 \pmod{2}$, where $\delta$ is as above. This condition implies that the condition \ref{it:or1co2} above must hold.
	     \end{enumerate}
	 \label{it:or2co2}
	 \end{enumerate}
     \end{enumerate}\label{th:amit}
   \end{theorem}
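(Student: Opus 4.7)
The plan is to reduce the question to local computations via the Hasse norm theorem for cyclic extensions, then tabulate local norm indices explicitly using the structure of cyclotomic local fields. By Remark~\ref{re:nonnorm}, $\mathfrak{U}_{m,r}$ is a division algebra if and only if $\gamma=\mu_m^t$ has exact multiplicative order $n$ in the quotient $F^{*}/\N_F^E(E^{*})$. Since $E/F$ is cyclic, the Hasse norm theorem says $\gamma^{\ell}$ is a global norm precisely when it is a local norm at every place $v$ of $F$. Consequently the global order of $\gamma$ equals $\mathrm{lcm}_{v}\, d_{v}$, where $d_{v}$ is the order of $\gamma$ in $F_{v}^{*}/\N(E_{w}^{*})$ for any $w \mid v$; by local class field theory each $d_{v}$ divides the local degree $[E_{w}:F_{v}]$. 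So the whole task is to compute every $d_{v}$ and show the lcm equals $n$ exactly under conditions (\ref{it:or1}) and (\ref{it:or2}).

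I would next split the places of $F$ into three groups. Archimedean places contribute a factor of $2$ precisely when some real place of $F$ lies under a complex place of $E$; this is governed by whether $r\equiv -1$ in the appropriate quotient of $(\mathbb{Z}/m\mathbb{Z})^{*}$ and is the source of the exceptional case (\ref{it:or2co1}) with $n=s=2$ and $m\mid r+1$. Finite primes $p\nmid m$ are unramified in $E/F$ and $\gamma$ is a unit there, so $d_{v}=1$. The real content is the finite primes $p\mid m$: writing $m=p^{\alpha}m'$ with $p\nmid m'$, the local field $\mathbb{Q}_{p}(\mu_{m})$ decomposes as the compositum of a totally ramified extension of degree $\varphi(p^{\alpha})$ and an unramified extension of degree $\delta=\ord_{m'}p$, and one can write down how the generator $\sigma_{r}$ restricts to this decomposition purely in terms of $r \bmod p^{\alpha}$ and $r\bmod m'$. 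A direct computation of the norm on the local uniformizer $1-\mu_{p^{\alpha}}$ and on the unramified part then shows that for each prime $q\mid n$, the existence of a place $v\mid p$ contributing a factor of $q$ to $d_{v}$ is equivalent to the condition on $\gcd(q,(p^{\delta}-1)/s)$ appearing in (\ref{it:or2co2}). Combined over all $p\mid m$ and over all prime divisors $q\mid n$, this gives exactly condition (\ref{it:or2}).

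Finally I would check condition (\ref{it:or1}), which is a compatibility constraint ensuring that the local orders can actually multiply up to $n$ without collision among different prime divisors of $n$. When $\gcd(n,t)=1$ (case \ref{it:or1co1}), each prime $q\mid n$ is coprime to $t$ and the required local contribution at some $p\mid m$ can always be isolated; the lcm computation is clean. Otherwise one is forced into the delicate $2$-adic case (\ref{it:or1co2}): the Galois group $(\mathbb{Z}/2^{\alpha}\mathbb{Z})^{*}\cong \mathbb{Z}/2\times \mathbb{Z}/2^{\alpha-2}$ is non-cyclic for $\alpha\ge 3$, so the $2$-part of $\sigma_{r}$ does not act as a single cyclic factor, and the usual local-order formula fails unless $2^{\alpha}\mid r+1$ pins down the right cyclic subfield. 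This is the main obstacle I anticipate: handling the $p=2$ case carefully to extract the exact arithmetic conditions in (\ref{it:or1co2}) and (\ref{it:or2co2})-ii, and verifying that the two kinds of $2$-adic anomaly (inside condition~\ref{it:or1} and inside condition~\ref{it:or2}) are mutually consistent as the statement asserts.

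Once the local tables at all $p\mid m$ and at the archimedean places are built, assembling them via the Hasse formula $\mathrm{ord}(\gamma) = \mathrm{lcm}_{v}d_{v}$ gives a purely combinatorial criterion on $(m,r)$ for this lcm to equal $n$; matching this criterion with the conjunction of (\ref{it:or1}) and (\ref{it:or2}) is then a bookkeeping exercise, and the theorem follows.
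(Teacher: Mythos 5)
The paper does not prove this theorem; it cites it directly as Theorem~4 of~\cite{Amit55} combined with Lemma~10 of the same paper, and the preamble only gestures at the method: Hasse's local-global principle for norms in the cyclic extension $\mathbb{Q}(\mu_m)/F$. Your outline matches that gestured-at method exactly — reduce the non-norm condition on $\gamma=\mu_m^t$ to local norm indices via the Hasse norm theorem, stratify the places into archimedean, unramified finite, and ramified finite, compute $d_v$ at the ramified primes $p\mid m$ using the decomposition of $\mathbb{Q}_p(\mu_m)$ into its totally ramified and unramified parts, and recover $n=\ord(\gamma)$ as $\operatorname{lcm}_v d_v$. So the route is the same one Amitsur takes and the paper implicitly endorses.

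That said, what you have is a roadmap, not a proof. Every sentence of the form \emph{``a direct computation of the norm on the local uniformizer then shows that\dots''} or \emph{``matching this criterion with the conjunction of (1) and (2) is then a bookkeeping exercise''} is precisely where the content of Amitsur's Theorem~4 and Lemma~10 lives. In particular: (i) the identification of the local norm index at a ramified $p\mid m$ with the condition $\gcd\bigl(q,(p^\delta-1)/s\bigr)=1$ requires actually computing the image of the norm map on units and on the uniformizer in $\mathbb{Q}_p(\mu_{p^\alpha})\cdot\mathbb{Q}_p(\mu_{m'})$ and tracking how $\sigma_r$ distributes over the ramified and inertial parts — this is a page of computation, not a remark; (ii) your treatment of the archimedean places is slightly off — the decomposition group at a real place of $F$ is nontrivial iff $-1\in\langle r\rangle\bmod m$, not just when $r\equiv-1$, and it takes a further argument to see why the extra archimedean contribution to the lcm only matters in the degenerate case $n=s=2$, $m\mid r+1$ of condition~(\ref{it:or2co1}); (iii) the role of condition~(\ref{it:or1}) is not merely a ``compatibility constraint'': it encodes a genuine obstruction at $p=2$ coming from the non-cyclicity of $(\mathbb{Z}/2^\alpha\mathbb{Z})^*$ for $\alpha\ge 3$, and the interplay between (\ref{it:or1co2}) and (\ref{it:or2co2})(ii) that you flag as ``the main obstacle'' really is where the proof is hardest; you've correctly identified it but not addressed it. In short, the approach is right and matches the paper's citation, but as written it would not substitute for the argument in~\cite{Amit55}; the local norm-index tables at $p\mid m$ and the $2$-adic case analysis are the proof, and they remain to be done.
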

   \begin{remark}
     About condition $\ref{it:or2co2}$ above, note that for a given prime $q \mid n$, there can exist at most one prime $p \mid m$ such that $q \nmid \ord_{ mp^{-\alpha}} (r)$, $\alpha$ being the power of $p$ in $m$. This is because if $m=p_1^{\alpha_1}p_2^{\alpha_2} \dots p_{k}^{\alpha_k}$ is the prime factorization of $m$, then 
     \begin{align}
       n=\ord_{m} r &  = \lcm \left(\ord_{m p_i^{-\alpha_i}}r , \ord_{p_i^{\alpha_i}} r \right) =  \lcm \left( \ord_{p_1^{\alpha_1}} r, \ord_{p_2^{\alpha_2}} r ,\dots, \ord_{p_k^{\alpha_k}}r \right),\\
       \ord_{mp_{i}^{ - \alpha_i}} r  & = \lcm \left( \ord_{p_1^{\alpha_1}} r,\dots, \ord_{p_{i-1}^{\alpha_{i-1}}} r ,   \ord_{p_{i+1}^{\alpha_{i+1}}} r ,\dots, \ord_{p_k^{\alpha_k}}r \right). 
   \end{align}
   So if $q \mid n$ but $q \nmid \ord_{m p_i^{-\alpha_i}}$ then $q\nmid \ord_{p_j ^{ \alpha_{j}}} r$ for each $j \neq i$ otherwise it would divide their $\lcm$. But then $q \mid \lcm \left(  \ord_{m p_j ^{-\alpha_j}} r, \ord_{p_j ^{\alpha_j}} r \right)$ so $q | \ord_{ m p_{j}^{-\alpha_j}  } r$.

   Hence, the prime $p$ whose existence is demanded in condition \ref{it:or2} exists uniquely depending on $q \mid n$.
   \end{remark}

   From Theorem \ref{th:amit}, we get a large family of $\mathbb{Q}$-division algebras $\mathfrak{U}_{m,r}$ and finite groups $G_{m,r}$ that embed inside them. When $m$ is odd and $\ord_{m} 2$ is odd, we can do slightly better and embed a group of size $24 | G_{m,r}|$ inside $\mathfrak{U}_{2,1} \otimes_{\mathbb{Q}} \mathfrak{U}_{m,r}$, which also is a division algebra. The next theorem says that apart from two more sporadic examples, these are all the finite groups that could concern us.

   The following is Theorem 7 from \cite{Amit55}.

   \begin{theorem} {\bf (Amitsur, 1955)}\\\label{th:amit_div}
     The following is an exhaustive list of finite groups $G_{0}$ that can be embedded in some $\mathbb{Q}$-division algebra $D$.
   \begin{center}
    \label{tab:table1}
    \begin{tabular}{l|l|l|l} 
      \textbf{Group} & \textbf{Conditions on the parameters} & \textbf{Size of} & \textbf{Dimension of the}\\ 
      \textbf{structure} & & \textbf{the group} &  \textbf{smallest division algebra} \\
      & & &  \textbf{containing the group} \\
      $G_{0} \subseteq D^{*}$ &  & $\#G_{0}$ &  $\dim_{\mathbb{Q}} \mathbb{Q}\langle G_{0}\rangle$  \\
      \hline
      \hline
      $\mathfrak{D}^{*}$&   & $48$  & $16$\\
\hline
      $\mathfrak{I}^{*}$&   & $120$  & $ 20 $\\
      \hline
      $G_{m,r}$ &  $r \le m$ are coprime and $\mathfrak{U}_{m,r}$ is a division algebra & $ m \ord_{m}r$ & $\varphi(m) \ord_{m}r$ \\
      \hline
      $ \mathfrak{T}^{*} \times G_{m,r}$ &  $r \le m$ are coprime and $\mathfrak{U}_{m,r}$ is a division algebra, & $24 m \ord_{m}r$ & $ 4 \varphi(m) \ord_{m}r$ \\
      & $m$ is odd and $\ord_{m} 2$ is odd. & & \\
      \hline
    \end{tabular}
  \end{center}
    Here $\mathfrak{T}^{*}, \mathfrak{D}^{*}, \mathfrak{I}^{*}$ are the binary tetrahedral group, binary octahedral group and binary icosahedral group respectively. They are finite groups whose respective size is 24,48 and 120.
\end{theorem}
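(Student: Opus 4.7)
The plan is to prove both directions of the classification: first, that every group listed embeds in some $\mathbb{Q}$-division algebra, and second, that no other finite group does. I would begin with the easier direction by giving explicit embeddings. For the metacyclic families $G_{m,r}$, the construction is already implicit in the excerpt: form the cyclic algebra $\mathfrak{U}_{m,r}=(\mathbb{Q}(\mu_m), F, \sigma_r, \mu_m^t)$ and use $A\mapsto \mu_m$, $B\mapsto b$; the relations $A^m=1$, $BAB^{-1}=A^r$, $B^n=A^t$ are immediate from $b^n=\mu_m^t$ and equation~(\ref{eq:ruleofD}). That $\mathfrak{U}_{m,r}$ is actually a division algebra precisely when the conditions of Theorem~\ref{th:amit} hold is the hard content; I would deduce it by checking the non-norm property of $\mu_m^t$ through Hasse's local–global norm theorem on the cyclotomic extension $\mathbb{Q}(\mu_m)/F$, analyzing each rational prime locally (ramified, inert, split) and using Hensel lifting to reduce the criterion to the arithmetic conditions on $m,r,n,s,t$. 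For the sporadic $\mathfrak{D}^*$ and $\mathfrak{I}^*$, I would exhibit explicit $16$- and $20$-dimensional division algebras (variants of quaternion algebras tensored with appropriate cyclotomic fields) containing them, and for the family $\mathfrak{T}^*\times G_{m,r}$ one tensors $\mathfrak{U}_{m,r}$ with the standard rational quaternion algebra $\mathfrak{U}_{2,1}$ containing $\mathfrak{T}^*$, verifying the tensor product stays division using the parity condition that $\ord_m 2$ is odd.

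The harder direction is exhaustiveness. Here the strategy is group-theoretic. The key observation is that if $G_0\subseteq D^*$ is finite and $a\in G_0$, then the centralizer $C_D(a)$ is a sub-division algebra of $D$, and any abelian subgroup of $G_0$ lies in a field (the subfield of $D$ it generates), hence must be cyclic as a finite subgroup of the multiplicative group of a field. Consequently every abelian subgroup of $G_0$ is cyclic, which by a standard theorem (Burnside) forces every Sylow $p$-subgroup of $G_0$ to be either cyclic or generalized quaternion, and in particular $G_0$ is a so-called $Z$-group or a $Z$-group extended by a $2$-group of quaternion type.

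Applying Zassenhaus's classification of finite groups with all abelian subgroups cyclic, one obtains a short list of candidate structures: cyclic groups, metacyclic Frobenius-like extensions $\langle A,B\mid A^m=1, B^n=A^t, BAB^{-1}=A^r\rangle$, and extensions involving the binary polyhedral groups $\mathfrak{T}^*,\mathfrak{D}^*,\mathfrak{I}^*$. One must then rule out, for each candidate outside the table, the existence of any embedding into a division algebra — this is done by assuming such an embedding and deriving a contradiction with the non-norm characterization of Theorem~\ref{th:amit} (applied inside the sub-division algebra $\mathbb{Q}\langle G_0\rangle$, as in the tightening remark after Proposition~\ref{pr:bound}). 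For groups mixing $\mathfrak{I}^*$ with non-trivial metacyclic factors, one shows the resulting candidate algebra has a zero divisor; similarly one rules out $\mathfrak{D}^*\times G_{m,r}$ for non-trivial $G_{m,r}$.

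The main obstacle is plainly the arithmetic analysis underlying Theorem~\ref{th:amit} — that is, translating the non-norm condition on $\mu_m^t$ in $F^*/N^E_F(E^*)$ into the concrete congruence conditions (\ref{it:or1}) and (\ref{it:or2}). This requires a careful prime-by-prime analysis inside $\mathbb{Q}(\mu_m)$: one must identify which primes of $F$ ramify or split in $E/F$, compute local norm groups using the explicit description of local units in cyclotomic extensions, and combine these via the Hasse norm theorem. The appearance of the separate parity condition \ref{it:or1co2} (with $2^\alpha\mid r+1$) and the role of the prime $2$ in \ref{it:or2co2} both trace back to the genuinely exceptional behavior of the prime $2$ in norm computations on cyclotomic fields, and this case distinction is the subtlest part of the argument. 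Once Theorem~\ref{th:amit} is in hand, the classification in Theorem~\ref{th:amit_div} follows by combining it with the group-theoretic enumeration above.
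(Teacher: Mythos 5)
The paper does not prove this theorem at all: it is stated as a citation of Theorem~7 (combined with Lemma~4 for the minimality of the dimension) in Amitsur's 1955 paper \cite{Amit55}, so there is no ``paper's own proof'' to compare against. What you have written is a blind reconstruction of Amitsur's original argument, and its large-scale shape is right: the forward direction via explicit cyclic-algebra constructions and the embedding $A\mapsto\mu_m$, $B\mapsto b$; the hard arithmetic content (non-norm verification via Hasse's local--global norm theorem, prime-by-prime, with $p=2$ exceptional) pushed into what the paper records as Theorem~\ref{th:amit}; and the converse via the observation that any abelian subgroup of $G_0$ generates a commutative sub-division algebra, hence a field, hence is cyclic, so every Sylow subgroup of $G_0$ is cyclic or generalized quaternion, after which one invokes the Zassenhaus-type classification of such groups and eliminates the candidates that cannot sit in a division algebra. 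That is indeed the skeleton of Amitsur's proof, so the approach is the right one.

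Two caveats. First, this is an outline, not a proof: the two genuinely hard steps --- translating ``$\mu_m^t$ is a non-norm element of $F$'' into the congruence conditions of Theorem~\ref{th:amit}, and ruling out, case by case, the groups on the Zassenhaus list that do not appear in the table (e.g.\ $\mathfrak{D}^*\times G_{m,r}$ and $\mathfrak{I}^*\times G_{m,r}$ for non-trivial $G_{m,r}$, or various non-split metacyclic extensions) --- are named but not carried out, and each is a substantial piece of work. Second, you inherit a notational slip from the paper: as actually defined in Section~\ref{ss:CDA}, $\mathfrak{U}_{2,1}$ has $m=2$, $r=1$, hence $n=1$ and $E=F=\mathbb{Q}$, so $\mathfrak{U}_{2,1}\simeq\mathbb{Q}$ is $1$-dimensional and certainly does not contain $\mathfrak{T}^*$. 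The algebra intended there is the rational Hamilton quaternion algebra $\left(\tfrac{-1,-1}{\mathbb{Q}}\right)$ (in the paper's indexing this is $\mathfrak{U}_{4,3}$, which contains $Q_8=G_{4,3}$ and, after adjoining $\tfrac{1}{2}(1+i+j+k)$, the full binary tetrahedral group); when citing the construction $\mathfrak{T}^*\times G_{m,r}\hookrightarrow H\otimes_{\mathbb{Q}}\mathfrak{U}_{m,r}$, you should make that explicit rather than reuse the degenerate symbol.
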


\begin{remark}
  The claim that the stated dimension is that of the smallest division algebra that contains the group $G_{0}$ follows from Lemma 4 in \cite{Amit55} for the two infinite families.
\end{remark}

\section{Analysis and comparisons of bound obtained}\label{se:improv}

Recall the $c_d$ which we had defined as
\begin{align}
  c_{d} = \sup\left\{ \mu\left( g B_{r}(0)\right) \ | \ r> 0,~g \in SL_{d}(\mathbb{R}) \text{ and } g B_{r}(0) \cap \mathbb{Z}^{d} = \{ 0\}\right\}.
\end{align}

Theorem \ref{th:amit_div} along with Theorem \ref{pr:boundbetter} gives us the following result. Before that, let us briefly recall a theorem of Hasse \cite{Ha55}.

\begin{theorem}{\bf (Hasse, `66)}\\
  Define $\pi_{2}(x)$ as
  \begin{align}
    \pi_{2}(x)  & = \#\{ p \ | \ 2 < p \le x\text{ is prime and } p|(2^{m}+1)\text{ for some }m \in \mathbb{Z}_{\ge 0}\}\\
    & =\# \{ p \ | \ 2 < p \le x\text{ is prime and } \ord_{p}2\text{ is even} \}.
  \end{align}
  Then, we have that 
  \begin{align}
  \pi_{2}(x) = \frac{17}{24} \frac{x}{\log x} + o\left(\frac{x}{\log x}\right).
  \end{align}
  \label{th:hasse}
\end{theorem}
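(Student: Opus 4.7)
The plan is to deduce Hasse's theorem from the Chebotarev density theorem, applied to a natural family of Kummer-type extensions of $\mathbb{Q}$. First I would reconcile the two descriptions of $\pi_2(x)$ given in the statement: for an odd prime $p$, writing $p-1 = 2^s u$ with $u$ odd, the subgroup of odd-order elements of $(\mathbb{Z}/p\mathbb{Z})^{*}$ coincides with the image of the $2^s$-th power map. Hence $\ord_p 2$ is even if and only if $2$ is \emph{not} a $2^s$-th power residue modulo $p$, which in turn is equivalent to $-1 \in \langle 2 \rangle \pmod p$, i.e.\ $p \mid 2^m+1$ for some $m \ge 0$.

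Next I would stratify primes by $s := v_2(p-1)$ and write the sought density as
\begin{align*}
\lim_{x \to \infty} \frac{\pi_2(x)}{x/\log x} \;=\; \sum_{s \ge 1} \delta_s ,
\end{align*}
where $\delta_s$ is the natural density of primes $p$ with $v_2(p-1)=s$ for which $2$ fails to be a $2^s$-th power mod $p$. Each $\delta_s$ is a ratio of conjugacy-invariant subsets in $\Gal(L_s/\mathbb{Q})$, where $L_s := \mathbb{Q}(\zeta_{2^{s+1}},\,2^{1/2^s})$: the cyclotomic piece detects the condition $v_2(p-1)=s$ (Frobenius trivial on $\mathbb{Q}(\zeta_{2^s})$ but not on $\mathbb{Q}(\zeta_{2^{s+1}})$), while the radical extension $L_s/\mathbb{Q}(\zeta_{2^s})$ detects, via Kummer theory, whether $2$ is a $2^s$-th power modulo $p$.

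The main obstacle is the arithmetic entanglement $\sqrt{2} = \zeta_8 + \zeta_8^{-1} \in \mathbb{Q}(\zeta_8)$. Without this coincidence, the extensions $L_s/\mathbb{Q}(\zeta_{2^s})$ would all have the maximal Kummer degree $2^s$, and a short geometric-series calculation would give the na\"ive answer $\sum_{s \ge 1} 2^{-s}(1-2^{-s}) = 2/3$. Because $\sqrt 2 \in \mathbb{Q}(\zeta_{2^s})$ for every $s \ge 3$, however, $[L_s : \mathbb{Q}(\zeta_{2^s})]$ collapses by a factor of two in that range, and $\Gal(L_s/\mathbb{Q})$ sits inside the expected semidirect product only as a proper subgroup with extra linear constraints that have to be tracked. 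The delicate combinatorial step is exactly this Galois-theoretic bookkeeping of the collapsed cosets for $s=1,2,3$ and the stable pattern for $s \ge 3$; carrying it out corrects the total by $+1/24$, yielding the exact value $17/24$.

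Finally, once the corrected $\delta_s$ are tabulated and summed, the geometric decay in $s$ makes $\sum_s \delta_s$ convergent, and an effective form of Chebotarev (applied to each $L_s$ with the error summed using that rapid decay) produces the stated asymptotic $\pi_2(x) = \frac{17}{24}\cdot \frac{x}{\log x} + o\!\left(\frac{x}{\log x}\right)$.
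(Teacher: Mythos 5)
The paper does not prove Theorem~\ref{th:hasse}; it states it as a black box, citing Hasse \cite{Ha55}, so there is no in-paper argument to compare against. That said, your plan is essentially the argument in Hasse's original work and in the modern treatments (e.g.\ Moree's survey on Artin-type density problems): reduce the parity of $\ord_p 2$ to a power-residue condition, stratify by $s=v_2(p-1)$, and apply Chebotarev to the cyclotomic--Kummer towers $L_s=\mathbb{Q}(\zeta_{2^{s+1}},2^{1/2^s})$. Your preliminary reductions are correct, as is identifying $\sqrt{2}\in\mathbb{Q}(\zeta_8)$ as the sole source of entanglement, and the observation that the densities decay geometrically in $s$ so that the Chebotarev error terms can be summed.

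As written, though, this is an outline rather than a proof: the whole content of the theorem is the exact constant $17/24$, and you simply assert that ``carrying out the bookkeeping corrects the total by $+1/24$'' without doing it. To close the gap you need, for each $s\ge1$, the degree $[L_s:\mathbb{Q}]$ and the cardinality of the Frobenius set detecting ``$v_2(p-1)=s$ and $2$ is a $2^s$-th power'' (namely, elements trivial on $\mathbb{Q}(\zeta_{2^s},2^{1/2^s})$ and nontrivial on $\mathbb{Q}(\zeta_{2^{s+1}})$). Doing so gives densities $d_1=1/4$, $d_2=0$, and $d_s=2^{1-2s}$ for $s\ge3$; summing yields $\sum_s d_s = 1/4 + 2\sum_{s\ge3}4^{-s} = 7/24$ for the odd-order case, hence $17/24$ for the even-order case. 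Two specific inaccuracies in your write-up: first, the collapse already affects $s=2$, not only $s\ge3$, since $\sqrt{2}\in\mathbb{Q}(\zeta_{2^{s+1}})$ as soon as $s\ge2$; second, and more importantly, for $s=2$ the Frobenius set is actually \emph{empty} (because $\mathbb{Q}(\zeta_8)\subset\mathbb{Q}(\zeta_4,2^{1/4})$, so a prime $p\equiv5\pmod 8$ cannot have $2$ as a quartic residue --- it is not even a quadratic residue), so $d_2=0$ rather than the naive $1/16$. Tracking exactly this degeneracy is where the $+1/24$ comes from, and is the part your proposal leaves unverified.
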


\begin{corollary}
  Using the prime number theorem, we get that if $\pi(x)$ is the prime-counting function, then the primes for which $\ord_{p} 2$ is odd follow the following growth.
  \begin{align}
    \pi(x) - \pi_2(x) = \frac{7}{24}\frac{x}{\log x} + o\left( \frac{x}{\log x} \right)
  \end{align}
\end{corollary}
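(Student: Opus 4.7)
The proof is essentially a one-line subtraction, so the plan is simply to unpack what each quantity counts and then combine the two asymptotics. First I would identify $\pi(x) - \pi_2(x)$ with (essentially) the count of primes $p \le x$ for which $\ord_p 2$ is odd, noting that the only discrepancy is the single prime $p = 2$ (which is in $\pi(x)$ but excluded from $\pi_2(x)$ by the restriction $2 < p$). Since a bounded constant is absorbed into the error term $o(x/\log x)$, this identification is good up to the prescribed error.

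Next I would invoke the prime number theorem in the form $\pi(x) = \frac{x}{\log x} + o\!\left(\frac{x}{\log x}\right)$, together with Hasse's theorem (Theorem \ref{th:hasse}) which supplies
\begin{align}
\pi_2(x) = \frac{17}{24}\frac{x}{\log x} + o\!\left(\frac{x}{\log x}\right).
\end{align}
Subtracting these two asymptotics gives
\begin{align}
\pi(x) - \pi_2(x) = \left(1 - \frac{17}{24}\right)\frac{x}{\log x} + o\!\left(\frac{x}{\log x}\right) = \frac{7}{24}\frac{x}{\log x} + o\!\left(\frac{x}{\log x}\right),
\end{align}
which is exactly the claimed asymptotic.

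There is no real obstacle here; the only thing to be careful about is the bookkeeping of the prime $p = 2$ and the fact that sums/differences of $o(x/\log x)$ error terms remain $o(x/\log x)$. The content of the corollary is entirely carried by Hasse's theorem and the prime number theorem.
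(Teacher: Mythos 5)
Your proof is correct and is essentially the (unstated) argument the paper intends: subtract Hasse's asymptotic for $\pi_2(x)$ from the prime number theorem, absorbing the single prime $p=2$ and the combined error into $o(x/\log x)$. The paper gives no explicit proof of this corollary, so there is nothing further to compare.
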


\begin{theorem}
  There exists a sequence of dimensions $\{ d_i\}_{i=1}^{\infty}$ such that for some $C>0$, we have $c_{d_{i}} > C d_{i} (\log \log d_{i})^{\frac{7}{24}}$
  and the lattices that achieve this bound in each dimension are symmetric under the linear action of a non-commutative finite group.
  \label{th:improv}
\end{theorem}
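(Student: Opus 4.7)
The plan is to combine Theorem \ref{pr:boundbetter} with the infinite family $\mathfrak{T}^{*}\times G_{m,r}$ listed in Theorem \ref{th:amit_div}, specialized to $r=1$. For $r=1$ one has $G_{m,1}\simeq \mathbb{Z}/m\mathbb{Z}$ and $\mathfrak{U}_{m,1}=\mathbb{Q}(\mu_m)$, which is always a division algebra, so the only remaining hypotheses in that row of the table are that $m$ is odd and $\ord_m 2$ is odd. The table then supplies a $\mathbb{Q}$-division algebra $D$ of dimension $4\varphi(m)$ containing $\mathfrak{T}^{*}\times G_{m,1}$, a group of order $24m$. Theorem \ref{pr:boundbetter} therefore gives $c_d\ge 24m$ in dimension $d=8\varphi(m)$, i.e.\ $c_d/d\ge 3m/\varphi(m)$, and the witness lattice is invariant under the action of the non-commutative group $\mathfrak{T}^{*}$, settling the qualitative part of the theorem.

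Next I would build the sequence. Let $\mathcal{P}=\{\,p>2\text{ prime}:\ord_p 2\text{ is odd}\,\}$ and enumerate $\mathcal{P}=\{p_1<p_2<\cdots\}$. Set $m_k=p_1p_2\cdots p_k$ and $d_k=8\varphi(m_k)$. Each $m_k$ is odd, and $\ord_{m_k} 2=\lcm_i\ord_{p_i} 2$ is a $\lcm$ of odd numbers and hence odd, so the triple qualifies. The corollary to Theorem \ref{th:hasse} says that $\#(\mathcal{P}\cap[1,x])=\tfrac{7}{24}\,x/\log x+o(x/\log x)$. A standard Abel-summation argument upgrades this density estimate to a Mertens-type formula
\[
\sum_{p\in\mathcal{P},\ p\le x}\frac{1}{p}=\frac{7}{24}\log\log x+O(1),
\]
and, using $-\log(1-1/p)=1/p+O(1/p^{2})$ and summing over $p\le p_k$, this yields
\[
\log\frac{m_k}{\varphi(m_k)}=\frac{7}{24}\log\log p_k+O(1).
\]

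Finally, I would compare $\log p_k$ with $\log\log d_k$. Inverting the density estimate gives $p_k\asymp k\log k$, so $\log p_k\asymp \log k$; likewise $\log m_k=\sum_{i\le k}\log p_i\asymp k\log k$, whence $\log\log m_k\asymp \log k\asymp \log p_k$. Because $m_k/\varphi(m_k)$ is only polylogarithmic in $p_k$, one has $\log\varphi(m_k)=\log m_k+O(\log\log p_k)$, so $\log\log d_k=\log\log(8\varphi(m_k))\asymp \log p_k$. Combining these,
\[
\frac{c_{d_k}}{d_k}\ \ge\ \frac{3m_k}{\varphi(m_k)}\ \gg\ (\log p_k)^{7/24}\ \asymp\ (\log\log d_k)^{7/24},
\]
which furnishes the required constant $C>0$ and proves the theorem along the sequence $\{d_k\}$.

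The main technical obstacle is the passage from Hasse's density statement to the summatory formula for $\sum_{p\in\mathcal{P}}1/p$; this is a standard Abel summation exercise but the error terms must be tracked uniformly. Everything else is bookkeeping: checking that multiplying primes of $\mathcal{P}$ preserves both ``$m$ odd'' and ``$\ord_m 2$ odd'', and verifying that replacing $m_k$ by $\varphi(m_k)$ inside the outer $\log\log$ costs nothing in the exponent.
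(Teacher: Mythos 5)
Your proposal is correct and follows essentially the same route as the paper: both take $m$ to be a product of the primes $p$ with $\ord_p 2$ odd, invoke the row $\mathfrak{T}^{*}\times G_{m,1}$ of Theorem~\ref{th:amit_div} together with Theorem~\ref{pr:boundbetter} to get $c_{8\varphi(m)}\ge 24m$, and then pass from Hasse's density theorem to a Mertens-type estimate for $m/\varphi(m)$ by Abel summation. Your write-up (indexing by $k$ rather than the paper's implicit parameter $x$, and isolating $\sum_{p\in\mathcal{P},\,p\le x}1/p=\tfrac{7}{24}\log\log x+O(1)$ as the key analytic step) is a cosmetic reorganization of the same argument.
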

\begin{proof}
  We pick $$m = \prod_{ \substack{ p \text{ is prime} \\ p \le x \\ 2 \nmid \ord_{p} 2} }^{} p$$ and $r=1$. Then observe that with this, we get that $m$ is odd and $\ord_{m}2$ is also odd. Using Theorem \ref{th:amit_div} and Theorem \ref{pr:boundbetter}, we get that $c_{8\varphi(m)} \ge 24m$.

  How do $\varphi(m)$ and $m$ grow with $x$? Define
  \begin{align}
    a_{n} = \begin{cases}
      1 \text{ if }n\text{ is an odd prime s.t.} 2 \nmid \ord_{p} 2\\
      0 \text{ otherwise}
    \end{cases}.
  \end{align}
  Then using Abel's summation formula and recalling $\pi_{2}(x)$ defined in Theorem \ref{th:hasse}, we get 
  \begin{align}
     \log \varphi(m) - \log m & =   \sum_{n=1}^{x } a_n \log\left(1-\frac{1}{n}\right) \\
     & = -\sum_{n=1}^{x} \frac{a_n}{n} + O(1)\\
     & = -\frac{ \pi(x) - \pi_2(x)  }{x} + O(1)  + \int_{1}^{x} \frac{\pi(t)-\pi_2(t)}{t^{2}} dt\\
     & = -\frac{7}{24} \log \log x  + o\left( \log \log x \right)\\
     & =  - \log \left( \log x \right)^{\frac{7}{24}} +  o \left( \log \log x \right).
  \end{align} 
  whereas
  \begin{align}
    \log m & = \sum_{n=1}^{x} a_n \log n \\
    & = \left( \pi(x) - \pi_2(x) \right)\log x - \int_{1}^{x} \frac{\pi(t) - \pi_{2}(t)}{t}dt \\
    & = \frac{7}{24}{x} + o\left({x} \right) \\
    \Rightarrow \log \log m & = \log x + o (\log x)\\
    \Rightarrow \log \log \varphi(m) & = \log x + o (\log x).
  \end{align}
  Putting this together, we get 
  \begin{align}
    \log m  & = \log \varphi(m) + \log (\log \log \varphi(m) )^{\frac{7}{24}} + o(\log \log \log \varphi(m)) \\
    \Rightarrow  m & > C \varphi(m) (\log \log \varphi(m))^{\frac{7}{24}} \text{ for some C > 0}
  \end{align}
\end{proof}

An analysis of the sequence of examples obtained through this has been done in Figure \ref{fig:graph}.

Another interesting sequence is the following. 
Let $m$ be any even number. Choose $r= m-1$. Then

We find that for this choice, 
\begin{align}
  n & = \ord_{m} r = \ord_{m} (-1) = 2, \\
  s & = \gcd(r-1,m)= \gcd(m-2,2)=2, \\
  t & = m/s = m/2  = \prod_{i=2}^{N} p_i .
\end{align}

Then we can check that the above choice of $(m,r,n,s,t)$ satisfies the conditions \ref{it:or1co1} and \ref{it:or2co1} of Theorem \ref{th:amit}.

\begin{proposition}\label{pr:asymptotic}
  Suppose $m = \prod_{i=1}^{N} p_i$, the product of first $N$ primes.  Then 
  \begin{align}
    c_{4 \phi(m)} \ge 2m,
  \end{align}
  and the lattice that achieves this bound is symmetric under the linear action of a non-commutative finite group. Along this sequence of dimensions $c_{d} \ge \frac{1}{2}(d \log \log d)$ eventually.
\end{proposition}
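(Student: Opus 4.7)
The plan is to instantiate Theorem \ref{pr:boundbetter} with the tuple $(G_0,\mathcal{O},D) = (G_{m,m-1},\mathbb{Z}\langle G_{m,m-1}\rangle,\mathfrak{U}_{m,m-1})$, and then extract the asymptotic using classical estimates. The paragraph immediately preceding the proposition has already checked that the choice $(m,r)=(m,m-1)$ gives $(n,s,t)=(2,2,m/2)$ satisfying conditions \ref{it:or1co1} and \ref{it:or2co1} of Theorem \ref{th:amit}, so $\mathfrak{U}_{m,m-1}$ is a $\mathbb{Q}$-division algebra of dimension $n\varphi(m)=2\varphi(m)$. The group $G_{m,m-1}$ embeds in its multiplicative group with $\#G_{m,m-1}=mn=2m$. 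Plugging into Theorem \ref{pr:boundbetter} with $d=2\dim_{\mathbb{Q}}\mathfrak{U}_{m,m-1}=4\varphi(m)$ gives exactly $c_{4\varphi(m)}\ge 2m$.

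For the symmetry claim, recall from the proof of Proposition \ref{pr:bound} that the lattice produced is of the form $g_0\mathcal{O}^{2}\subseteq D_{\mathbb{R}}^{2}$ and is invariant under the left action of $G_0=G_{m,m-1}$ by right multiplication on coordinates. I would then verify non-commutativity directly from the presentation: the generators satisfy $BAB^{-1}=A^{r}=A^{m-1}=A^{-1}$, and since $m\ge 2\cdot 3=6$, the element $A$ has order $m>2$, so $A\neq A^{-1}$ and therefore $G_{m,m-1}$ is non-abelian. Hence the resulting lattice is invariant under a non-commutative finite group action.

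For the asymptotic inequality, I would combine Mertens' third theorem with the prime number theorem. Since $m$ is the primorial of $p_N$, one has
\begin{align}
\frac{m}{\varphi(m)}=\prod_{i=1}^{N}\frac{p_i}{p_i-1}\sim e^{\gamma}\log p_N.
\end{align}
By PNT, $\log m=\vartheta(p_N)\sim p_N$, hence $\log\log m\sim\log p_N$, giving $m/\varphi(m)\sim e^{\gamma}\log\log m$. Writing $d=4\varphi(m)$, one checks $\log\log d=\log\log(4\varphi(m))\sim\log\log m$ (the correction from replacing $m$ by $\varphi(m)$ is a $\log\log m$ factor, which vanishes under one more logarithm). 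Combining,
\begin{align}
\frac{c_{d}}{d}\ge\frac{2m}{4\varphi(m)}=\frac{m}{2\varphi(m)}\sim\frac{e^{\gamma}}{2}\log\log d,
\end{align}
and since $e^{\gamma}>1$, we get $c_{d}\ge\tfrac{1}{2}\,d\log\log d$ for all sufficiently large $N$.

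The substantive steps are the Amitsur verification (already done in the text) and the Mertens-plus-PNT estimate; neither is a serious obstacle. The only delicate point is the harmless observation that $\log\log\varphi(m)=\log\log m+o(1)$, which justifies replacing $\log\log m$ by $\log\log d$ in the final bound without losing the constant.
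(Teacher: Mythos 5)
Your proof is correct and follows the route the paper clearly intends: the paragraph preceding the proposition has already verified that $(m,r)=(m,m-1)$ with even $m>2$ satisfies Amitsur's conditions \ref{it:or1co1} and \ref{it:or2co1}, so instantiating Theorem \ref{pr:boundbetter} with $(G_{m,m-1},\mathbb{Z}\langle G_{m,m-1}\rangle,\mathfrak{U}_{m,m-1})$ gives $c_{4\varphi(m)}\ge 2m$, and the Mertens/PNT computation you give (with $\log\log(4\varphi(m))\sim\log\log m$ and $e^{\gamma}/2>1/2$) is exactly the standard way to extract the eventual bound $c_d\ge\tfrac{1}{2}d\log\log d$ along the primorial sequence. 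The non-commutativity check via $BAB^{-1}=A^{-1}$ with $\mathrm{ord}(A)=m>2$ is also the right observation.
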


More exotic examples can also be constructed.
With $\{ p_1,p_2,\dots\}$ being the sequence of all primes, suppose $q= 1+\prod_{i=1}^{N}p_{i} $ is a prime for some $N$. Then we can choose an integer $r$ such that $r \equiv 1 \pmod{p_i}$ for each $i$ but has $\ord_{q} r = q-1$, i.e. $r$ is a generator of $\mathbb{F}_{q}^{*}$. Set $m=q(q-1) = q \prod_{i=1}^{N} p_i$. This gives us 
\begin{align}
  \ord_{m}r = \lcm \left( \ord_{q}r , \ord_{p_1} r, \dots, \ord_{p_i}r \right) = q-1.
\end{align}
Then we can check that this choice of $(m,r,n,s,t)$ satisfies Theorem \ref{th:amit}, conditions \ref{it:or1co1} and \ref{it:or2co2}.
\begin{align}
  (m,r,n,s,t)= \left(q(q-1), r, q-1, \gcd(r-1,q(q-1)) ,\tfrac{q(q-1)}{\gcd(r-1,q(q-1))}\right).
\end{align}

Since $\varphi(q(q-1)) = (q-1)\varphi(q-1)$, using Theorem \ref{th:amit_div} and Theorem \ref{th:improv} we get 
\begin{align}
  c_{2 (q-1)^{2}\varphi(q-1) } \ge q(q-1)^{2}.
\end{align}

Whether or not there are infinitely many primes of the form $ 1 + \prod_{i=1}^{N} p_{i} $ is a notorious open problem. Such primes are called primorial primes. 


Finally, it is worth pointing out that no sequence constructed using Theorem \ref{pr:boundbetter} can give us an asymptotic growth strictly better than $O(d \log \log d)$. Indeed, looking at Theorem \ref{th:amit_div}, we observe that $|G_0|/\dim_{\mathbb{Q}}D$ can at most be $3 m/\varphi(m)$ for some sequence of integers $m$. Hence, using the division algebra approach outlined here, the best lower bound that can be attained on $c_d/d$ will be at most $O(\log \log d)$.

\section*{Acknowledgements}
I would like to thank my advisor Prof. Maryna Viazovska for her helpful suggestions, ideas and suggesting directions. I also thank Matthew DeCourcy-Ireland and Vlad Serban for their useful comments and spotting some typographical mistakes.

This work was funded by the Swiss National Science Foundation (SNSF), Project funding (Div. I-III), "Optimal configurations in multidimensional spaces", 184927.
\begin{appendices}

\section{Matrices over real semisimple algebra}
\label{se:mat_over_real_ss_algebra}

Let us set up some introductory preliminaries about semisimple algebras over $\mathbb{R}$. This material is useful to describe the ``coarse'' fundamental domain introduced in Section \ref{se:reduction_theory}.

\subsection{Real semisimple algebras}
Throughout this text, we will use the word $k$-algebra when we actually mean an associative unital $k$-algebra. Our story begins with the following well-known result.

\begin{theorem} \textbf{ (Artin-Wedderburn)}
  \label{th:AW}
  Suppose $A$ is a semisimple algebra over a field $k$. Then for some finite-dimensional $k$-division algebras $D_1, D_2, \dots, D_r$ and natural numbers $n_1, \dots , n_r$, we get the isomorphism
  \begin{align}    
    A \simeq M_{n_1}(D_1) \oplus \dots \oplus M_{n_r}(D_r).\label{eq:semisimple}
  \end{align}
\end{theorem}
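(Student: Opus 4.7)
The plan is to use the definition of semisimplicity to decompose $A$ as a left $A$-module into simples, and then recover the ring structure through the identification $A \cong \End_A(A)^{\opp}$ together with Schur's lemma.

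First, by the semisimplicity hypothesis, $A$ decomposes as a left $A$-module into a direct sum $A = \bigoplus_\alpha L_\alpha$ of simple left $A$-modules. Since $1_A$ lies in this sum, it is a finite linear combination involving only finitely many of the $L_\alpha$, and a short argument (multiplying on the left by $1_A$) shows that only those finitely many summands can appear. Collecting isomorphic ones, I can write
\begin{align*}
A \;\cong\; L_1^{n_1} \oplus L_2^{n_2} \oplus \cdots \oplus L_r^{n_r},
\end{align*}
as left $A$-modules, where $L_1,\dots,L_r$ are pairwise non-isomorphic simple left $A$-modules.

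Next I would invoke Schur's lemma: for each $i$, the ring $D_i := \End_A(L_i)$ is a division $k$-algebra, and $\Hom_A(L_i,L_j)=0$ for $i\neq j$. If $A$ is assumed finite-dimensional over $k$ (as is the case in every application above), then each $L_i$, being a quotient of $A$, is finite-dimensional, hence so is $D_i \subseteq \End_k(L_i)$.

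The ring structure is then recovered from the standard $k$-algebra isomorphism
\begin{align*}
A \;\xrightarrow{\ \sim\ }\; \End_A(A)^{\opp}, \qquad a \;\longmapsto\; \bigl(x \mapsto xa\bigr).
\end{align*}
Applying $\End_A(-)$ to the decomposition above and using the Schur vanishing to kill off-diagonal blocks gives
\begin{align*}
\End_A(A) \;\cong\; \bigoplus_{i=1}^r \End_A\!\bigl(L_i^{n_i}\bigr) \;\cong\; \bigoplus_{i=1}^r M_{n_i}(D_i),
\end{align*}
and taking opposites yields $A \cong \bigoplus_{i=1}^r M_{n_i}(D_i^{\opp})$. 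Since $D_i^{\opp}$ is again a finite-dimensional $k$-division algebra, renaming it finishes the proof.

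The only genuine subtleties are the finiteness of the decomposition (which uses the unit element), the careful bookkeeping with the opposite algebra, and the identification $\End_A(L_i^{n_i}) \cong M_{n_i}(D_i)$, which is straightforward once one writes an endomorphism of $L_i^{n_i}$ as an $n_i \times n_i$ matrix of endomorphisms of $L_i$. I expect the main obstacle to be purely notational rather than conceptual; nothing deep is needed beyond Schur's lemma and the right-regular representation.
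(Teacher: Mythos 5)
The paper cites Artin--Wedderburn as a classical background result and gives no proof of its own; it even remarks that an unfamiliar reader may take the right-hand side of the displayed isomorphism as the definition of a semisimple $k$-algebra. So there is no in-paper argument to compare against.

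Your argument is the standard and correct proof. The steps --- decomposing the regular module $A$ into a finite direct sum of simples (finiteness extracted via $1_A$), invoking Schur's lemma to get $D_i = \End_A(L_i)$ as division algebras and to kill the off-diagonal homomorphism spaces, identifying $A \cong \End_A(A)^{\opp}$ through right multiplication, and then using $M_n(D)^{\opp} \cong M_n(D^{\opp})$ to absorb the opposite --- are all correctly executed, and the bookkeeping with $\opp$ is done properly. You also correctly identify the one hypothesis the paper leaves implicit: the conclusion that each $D_i$ is \emph{finite-dimensional} over $k$ requires $A$ itself to be finite-dimensional over $k$ (otherwise $\End_A(L_i)$ need not be finite-dimensional). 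In every application in the paper ($D_{\mathbb{R}}$, $M_k(D_{\mathbb{R}})$, group algebras, etc.) the algebra in question is finite-dimensional, so this is harmless, but you were right to flag exactly where that assumption enters.
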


The right side of Equation (\ref{eq:semisimple}) is always semisimple for any choice of finitely many finite-dimensional $k$-division algebras. Thus, any reader who is not familiar with these objects could take the definition of semisimple $k$-algebras as the object on the right side.

\begin{theorem} \textbf{(Frobenius)} \\
  The only finite-dimensional $\mathbb{R}$-divison algebras (up to isomorphism) are $\mathbb{R}$, $\mathbb{C}$ and $\mathbb{H}$.
\end{theorem}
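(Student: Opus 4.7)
The strategy is to analyze $D$ element by element via minimal polynomials over $\mathbb{R}$. Since $D$ is finite-dimensional, each $d \in D$ satisfies a nontrivial polynomial relation, and because $D$ has no zero divisors this minimal polynomial must be irreducible over $\mathbb{R}$. Since $\mathbb{R}[x]$ has only linear and irreducible quadratic factors, each $d \in D$ is either real or satisfies some $d^2 + bd + c = 0$ with $b^2 - 4c < 0$. In the latter case, completing the square and rescaling shows that $d' = (d + b/2)/\sqrt{c - b^2/4}$ satisfies $(d')^2 = -1$.

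If every element lies in $\mathbb{R} \cdot 1_D$, then $D = \mathbb{R}$ and we are done. Otherwise I would fix an $i \in D$ with $i^2 = -1$, so that $\mathbb{R} \oplus \mathbb{R} i$ is a subalgebra isomorphic to $\mathbb{C}$; if this exhausts $D$, we get $D \cong \mathbb{C}$. Otherwise I would introduce the ``pure imaginary'' subset
\[
V = \{v \in D \ | \ v^2 \in \mathbb{R}_{\le 0}\}.
\]
The key claim is that $V$ is an $\mathbb{R}$-linear subspace of $D$ and that $D = \mathbb{R} \oplus V$ as $\mathbb{R}$-vector spaces. The decomposition follows from the observation above: for $v \notin \mathbb{R}$ with $v^2 + bv + c = 0$, the element $v + b/2$ lies in $V$. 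Closure of $V$ under addition is obtained by polarizing the quadratic map $v \mapsto v^2$: for $u, v \in V$ one checks $(u+v)^2 + (u-v)^2 = 2(u^2 + v^2) \in \mathbb{R}$, and a short argument using the decomposition pins down $(u+v) \in V$.

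Next I would equip $V$ with the symmetric $\mathbb{R}$-bilinear form $\langle u, v \rangle = -\tfrac{1}{2}(uv + vu)$, valued in $\mathbb{R}$ because $uv + vu = (u+v)^2 - u^2 - v^2$. This form is positive-definite on $V$ (its value at $v$ is $-v^2 \ge 0$, with equality forcing $v = 0$). Taking an orthonormal basis starting with $i$, any second unit vector $j \in V$ in the orthogonal complement of $\mathbb{R} i$ satisfies $j^2 = -1$ and $ij + ji = 0$. Setting $k = ij$, a direct computation gives $k^2 = -1$, $jk = i$, $ki = j$, so $\mathbb{R} \oplus \mathbb{R}i \oplus \mathbb{R}j \oplus \mathbb{R}k \subseteq D$ is a copy of $\mathbb{H}$. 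If $V$ is two-dimensional we stop at $\mathbb{C}$; if three-dimensional we reach exactly $\mathbb{H}$.

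The main obstacle is ruling out higher-dimensional $D$, i.e. showing $\dim_{\mathbb{R}} V \le 3$. Suppose there were a fourth unit vector $\ell \in V$ orthogonal to $i, j, k$. Then $\ell$ anticommutes with each of $i, j, k$. But then $i\ell$ would anticommute with $j$ (since $i$ and $\ell$ each anticommute with $j$, the product commutes with $j$, while $k = ij$ already anticommutes with $j$, giving a contradiction when one computes $(i\ell)j$ two ways). Spelling this out carefully, one shows $i\ell \in V$ must be proportional to $k$, yet $i\ell$ is also orthogonal to $k$ (since $\langle i\ell, k\rangle = \langle i\ell, ij\rangle$ vanishes using the anticommutation relations), forcing $i\ell = 0$ and hence $\ell = 0$. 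Thus $\dim_{\mathbb{R}} D \in \{1, 2, 4\}$, and the three cases give exactly $\mathbb{R}$, $\mathbb{C}$, $\mathbb{H}$.
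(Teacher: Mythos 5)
The paper does not prove this theorem; it is invoked as a classical fact without proof. Your approach is the standard elementary proof (often attributed to Palais): isolate the pure-imaginary part $V$, show it is a linear subspace giving $D = \mathbb{R} \oplus V$, equip $V$ with the positive-definite form $-\tfrac12(uv+vu)$, build up $i, j, k$ from an orthonormal basis, and then show a fourth orthonormal vector $\ell$ forces a contradiction. The skeleton is sound and the hard lemma (closure of $V$ under addition via the parallelogram identity) is correctly indicated.

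There are two slips in the final stretch. First, a minor miscount: if $\dim_{\mathbb{R}} V = 1$ you get $D \cong \mathbb{C}$ (so $\dim_{\mathbb{R}} D = 2$); ``$V$ two-dimensional'' never occurs, since the existence of $j \perp i$ automatically produces a third independent vector $k = ij$. Second, and more substantively, your claim that $i\ell$ ``must be proportional to $k$'' is wrong. From $i$ and $\ell$ both anticommuting with $j$, the product $i\ell$ \emph{commutes} with $j$, and a short computation (using that $w \in V$ commuting with $j$ forces $wj \in \mathbb{R}$, hence $w \in \mathbb{R} j$) shows $i\ell \in \mathbb{R} j$, not $\mathbb{R} k$. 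You can still finish from there — $i\ell = cj$ gives $\ell = -ck$, contradicting $\ell \perp k$ — but the cleanest repair drops $i\ell$ entirely: since $\ell$ anticommutes with both $i$ and $j$, it \emph{commutes} with $k = ij$, i.e. $\ell k = k\ell$; yet $\ell \perp k$ in $V$ means $\ell k = -k\ell$. Hence $\ell k = 0$, and since $k$ is invertible, $\ell = 0$. This eliminates $\dim V \ge 4$ in one line and avoids the proportionality claim altogether.
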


The three $\mathbb{R}$-division algebras all have a special ``conjugation'' involution that is compatible with the canonical inclusion $\mathbb{R} \hookrightarrow \mathbb{C} \hookrightarrow \mathbb{H}$. The map $\overline{(\ )}:\mathbb{H} \rightarrow \mathbb{H}$ given as $a + i b + j c +  k d \mapsto a - i b - j c - k d$ ($a,b,c,d \in \mathbb{R}$ and $i,j,k$ canonically span $\mathbb{H}$) satisfies that for any $x,y \in \mathbb{H}$ we have $\overline{x.y} = \overline{y}. \overline{x}$. When restricted to $\mathbb{C}$, this is the usual complex conjugation and when restricted to $\mathbb{R}$, this is the idenitity map. Another important property is that for any $ a + i b + j c + k d =x \in \mathbb{H}$, $\overline{x} x = a^{2} + b^{2} + c^{2} + d^{2} \in \mathbb{R}_{\ge 0}$. 

The two theorems stated above give rise to the following corollary. 

\begin{corollary}
  Any semisimple $\mathbb{R}$-algebra is isomorphic to one of products of matrix algebras over $\mathbb{R}$, $\mathbb{C}$ and $\mathbb{H}$.
\end{corollary}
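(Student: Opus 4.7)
The plan is to simply chain together the two theorems stated immediately above the corollary. Starting with a semisimple $\mathbb{R}$-algebra $A$ (finite-dimensional, as is implicit in the paper's framework), the first step is to invoke the Artin--Wedderburn theorem to produce an isomorphism
\begin{align}
A \simeq M_{n_1}(D_1) \oplus \dots \oplus M_{n_r}(D_r),
\end{align}
where each $D_i$ is a finite-dimensional division algebra over $\mathbb{R}$ and each $n_i$ is a positive integer.

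The second step is then to apply Frobenius's theorem to each factor individually. Since each $D_i$ is by construction a finite-dimensional $\mathbb{R}$-division algebra, Frobenius's classification forces $D_i \in \{\mathbb{R},\mathbb{C},\mathbb{H}\}$ up to isomorphism. Substituting these identifications back into the Artin--Wedderburn decomposition yields the claimed presentation of $A$ as a direct sum of matrix algebras over $\mathbb{R}$, $\mathbb{C}$, and $\mathbb{H}$.

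There is essentially no obstacle here: the corollary is purely a bookkeeping combination of the two cited theorems, and no further verification or estimate is required. The only point worth flagging for the reader is that the word "products" in the corollary's statement refers to finite direct sums (equivalently, direct products) of algebras, matching the form of the Artin--Wedderburn decomposition; no topology or completion is involved, so nothing subtle happens at this step.
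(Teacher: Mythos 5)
Your proof is correct and is exactly the intended argument: the paper states this corollary as an immediate consequence of the Artin--Wedderburn and Frobenius theorems given just above, with no separate proof written out. Nothing further is needed.
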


Matrix algebras over $\mathbb{R}$, $\mathbb{C}$ and $\mathbb{H}$ are well understood. One important property is that the conjugation map defined above can be extended to a ``conjugate transpose'' involution on such matrices by simply defining the mapping $[x_{ij}]^{*} = [\overline{x_{ji}}]$. With this, we can also define a positive definite quadratic form on these matrix algebras by sending $a \mapsto \Tr( a^{*} a)$. 

On a given finite-dimensional algebra over $\mathbb{R}$, it is possible to define the trace map $\Tr_{A}: A \rightarrow \mathbb{R}$ and the norm map $\N_{A}: A \rightarrow \mathbb{R}$ as the trace and the determinant of the matrix of the left-multiplication operation induced by any element.
Similarly, it is also possible to generalize the above involution simply by taking direct sums of the respective involutions for matrix rings over $\mathbb{R}, \mathbb{C}$ or $\mathbb{H}$. We will omit the subscripts in $\Tr_{A}$ and $\N_{A}$ when $A$ is clear from the context.

\begin{corollary}
  Any semisimple $\mathbb{R}$-algebra $A$ admits an involution $(\ )^{*} : A \rightarrow A$ such that the following conditions are satisfied.
  \begin{itemize}
    \item For any $a,b \in A$, we have $(a b) ^{*}  = b^{*} a^{*}$.
    \item $a \mapsto \Tr(a^{*} a  )$ is a positive definite quadratic form on $A$. i.e. it is always non-negative and is zero only when $a =0$.
  \end{itemize}
  \label{co:involution}
\end{corollary}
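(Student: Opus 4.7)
The plan is to reduce to the case of a simple block $M_n(D)$ with $D \in \{\mathbb{R}, \mathbb{C}, \mathbb{H}\}$ via the Artin-Wedderburn and Frobenius theorems already invoked, and then exhibit the involution explicitly as the conjugate transpose on each factor, extending by direct sum. Concretely, if $A \simeq \bigoplus_{r} M_{n_r}(D_r)$, I would define $(a_1, \ldots, a_r)^* = (a_1^*, \ldots, a_r^*)$ where each $a_r^*$ is the conjugate transpose $[x_{ij}]^* = [\overline{x_{ji}}]$ using the respective conjugation on $D_r$ recalled in the excerpt.

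The anti-multiplicativity $(ab)^* = b^* a^*$ reduces to a block-diagonal verification and, within one block, is the standard calculation: expanding $(ab)^*_{ij} = \overline{(ab)_{ji}} = \overline{\sum_k a_{jk} b_{ki}} = \sum_k \overline{b_{ki}}\, \overline{a_{jk}} = (b^* a^*)_{ij}$, where the identity $\overline{xy} = \bar y \bar x$ on $D_r$ (in particular on $\mathbb{H}$) is what makes the reversal work. Additivity and $\mathbb{R}$-linearity are immediate, as is the involutive property $(a^*)^* = a$.

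The second assertion is the arithmetic content. Here I need to compute $\Tr_A(a^* a)$, remembering that $\Tr_A$ means the trace of left multiplication on $A$ as an $\mathbb{R}$-vector space, not the matrix trace. On a single block $M_n(D_r)$, the regular representation has trace $c_r n \cdot \mathrm{Re}(\mathrm{tr}_{\mathrm{mat}}(a))$, where $c_r \in \{1,2,4\}$ according to $D_r = \mathbb{R}, \mathbb{C}, \mathbb{H}$ and $\mathrm{Re}$ is the real-part functional extending to $\mathbb{R}$-linear projection on $\mathbb{C}$ and $\mathbb{H}$. Then one gets
\begin{align}
\Tr_{A}(a^* a) \;=\; \sum_r c_r n_r \sum_{i,j} \mathrm{Re}\bigl(\overline{(a_r)_{ji}}\,(a_r)_{ji}\bigr) \;=\; \sum_r c_r n_r \sum_{i,j} \bigl|(a_r)_{ij}\bigr|^2,
\end{align}
where $|x|^2 = \bar x x \in \mathbb{R}_{\ge 0}$ is the standard norm squared in $D_r$. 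This is manifestly a sum of squares in the real coordinates of $A$, hence a positive definite quadratic form, with vanishing forcing every entry of every block to be zero.

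I do not anticipate a real obstacle. The only mildly subtle point is matching the paper's convention: the map $\Tr_A$ is the trace of the \emph{regular representation} (not the reduced trace), so one must not conflate it with the matrix trace. Once the factor $c_r n_r$ is correctly inserted, positive definiteness follows by inspection, and assembling the direct sum is formal.
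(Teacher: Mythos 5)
Your proposal is correct and follows essentially the same route as the paper: take the conjugate transpose blockwise via Artin--Wedderburn and Frobenius, and observe that the regular-representation trace of $a^*a$ is a sum of squared moduli of the $D_r$-entries. You simply spell out the anti-multiplicativity and the trace computation $\Tr_A(a^*a)=\sum_r c_r n_r \sum_{i,j}|(a_r)_{ij}|^2$ that the paper leaves to the reader, including the careful distinction between $\Tr_A$ (regular trace) and the matrix trace.
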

\begin{proof}
  Simply take the direct sum of the ``conjugate transpose'' operation defined above on each matrix component of the semisimple algebra $A$. It is then to be seen that the trace function on $A$ is a sum of traces on the right side of Equation (\ref{eq:semisimple}), when they are realized as real matrix algebras. For instance, we must see $M_1(\mathbb{C})$ as a $2$-dimensional matrix algebra under the mapping $a+ib \mapsto \left[ \begin{smallmatrix} a & -b \\ b & a\end{smallmatrix}\right]$. 
  
\end{proof}

\begin{definition}\label{de:pos_inv}
  Any involution $A \rightarrow A$ satisfying the two properties of Corollary \ref{co:involution} is said to be a positive involution on $A$.
\end{definition}

\begin{lemma}\label{le:enjoyable}
  Suppose $(\ )^{*}: A \rightarrow A$ is a positive involution. Then 
  \begin{itemize}
    \item $1_{A}^{*} = 1_{A}$.
    \item If $u \in A$ is a zero non-divisor\footnote{In a finite-dimensional algebra over a field $k$, being a zero non-divisor is equivalent to being a unit and is also equivalent to the left/right multiplication map being full-rank.}, then $(u^{*})^{-1} = (u^{-1})^{*}$.
    \item For $u \in A$, $\Tr(u) = \Tr(u^{*})$.
    \item The inner product induced by the positive definite quadratic form $x \mapsto \Tr(x^{*} x)$ is $\langle x,y\rangle   = \Tr(x^{*} y)$.
  \end{itemize}
\end{lemma}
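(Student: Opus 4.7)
The plan is to treat the four items in the order given, since the later ones build on the earlier ones.

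For the first property, I would use nothing more than the defining identities of the involution. Applying $(\ )^*$ to the equation $1_A \cdot a = a = a \cdot 1_A$ and using $(xy)^* = y^* x^*$ yields $a^* \cdot 1_A^* = a^* = 1_A^* \cdot a^*$ for every $a \in A$. Since $(\ )^*$ is a bijection of $A$, the element $1_A^*$ is a two-sided identity of $A$, so by uniqueness $1_A^* = 1_A$. For the second property, observe that $A$ is finite-dimensional, so a zero non-divisor $u$ is a unit, and $u u^{-1} = u^{-1} u = 1_A$. Applying $(\ )^*$ and using $1_A^* = 1_A$ from the previous step gives $(u^{-1})^* u^* = u^* (u^{-1})^* = 1_A$, so $(u^*)^{-1}$ exists and equals $(u^{-1})^*$.

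For the third property I would argue using an orthonormal basis. The hypothesis that $q(x) = \Tr(x^*x)$ is positive definite means its polarization
\begin{align}
\tilde g(x,y) = \tfrac{1}{2}\bigl(\Tr(x^*y) + \Tr(y^*x)\bigr)
\end{align}
is a positive definite symmetric $\mathbb{R}$-bilinear form on $A$. Pick a $\tilde g$-orthonormal basis $\{e_i\}$ of $A$. Since $\Tr_A(a)$ equals the trace of the $\mathbb{R}$-linear map $L_a : x \mapsto ax$, we may compute it as
\begin{align}
\Tr_A(a) = \sum_i \tilde g(e_i, a e_i) = \tfrac{1}{2}\sum_i \bigl(\Tr(e_i^* a e_i) + \Tr(e_i^* a^* e_i)\bigr).
\end{align}
Applying the cyclic identity $\Tr(xy) = \Tr(yx)$ inside $A$ to each summand yields
\begin{align}
\Tr_A(a) = \tfrac{1}{2}\Tr(aC) + \tfrac{1}{2}\Tr(a^*C), \qquad C := \sum_i e_i e_i^*.
\end{align}
The same computation applied to $a^*$ in place of $a$ gives the symmetric expression $\tfrac{1}{2}\Tr(a^*C) + \tfrac{1}{2}\Tr(aC)$, so $\Tr(a) = \Tr(a^*)$.

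The fourth property is now immediate from the third: by the cyclic property of the trace and the identity $(x^*y)^* = y^*x$, we have $\Tr(y^*x) = \Tr((x^*y)^*) = \Tr(x^*y)$, so the polarized form $\tilde g$ coincides with $(x,y)\mapsto \Tr(x^*y)$. The main subtlety of the whole lemma is the trace identity $\Tr(a^*) = \Tr(a)$: one could prove it by invoking Corollary \ref{co:involution} and decomposing $A$ into matrix blocks, but I prefer the basis-free argument above because it works for any positive involution, not only for the specific conjugate-transpose involution manufactured on the Artin–Wedderburn decomposition, and it uses only facts already in hand.
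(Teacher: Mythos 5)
Your proof is correct. Items one, two, and four are routine and handled essentially as the paper intends. The third item, $\Tr(u) = \Tr(u^*)$, is the substantive one, and there you take a genuinely different route from what the paper hints at. The paper's pointer is to use semisimplicity of $A$ (which follows from positive definiteness of $x \mapsto \Tr(x^*x)$): semisimplicity forces the trace of left-multiplication by any element to equal the trace of right-multiplication, and since left-multiplication by $a^*$ sends $x \mapsto a^*x = (x^*a)^*$ --- i.e.\ it is the conjugate of right-multiplication by $a$ under the linear bijection $(\ )^*$ --- one reads off $\Tr(a^*) = \Tr(a)$ directly. You instead polarize the quadratic form, expand $\Tr_A(a)$ against a $\tilde g$-orthonormal basis, and use only the cyclic identity $\Tr_A(xy) = \Tr_A(yx)$ (valid in any finite-dimensional algebra, semisimple or not) to land on $\tfrac{1}{2}\Tr(aC) + \tfrac{1}{2}\Tr(a^*C)$, whose manifest symmetry under $a \leftrightarrow a^*$ finishes the job. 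Your route is a bit sneakier, sidestepping the left-trace-equals-right-trace fact as a standalone ingredient; semisimplicity is still implicitly present, but now hidden entirely inside the positive-definiteness hypothesis. One small quibble: you characterize the alternative proof as ``decomposing $A$ into matrix blocks via Artin--Wedderburn,'' but the route the paper actually hints at --- via equality of left and right multiplication traces --- is equally coordinate-free and also applies to an arbitrary positive involution, not just the explicit conjugate-transpose one built from the block decomposition.
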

\begin{proof}
  The proofs are very enjoyable, so we leave all of them for the reader. The third one will require the use of semisimplicity of $A$, which implies that the left-multiplication trace and right-multiplication trace are the same.
\end{proof}

The notions of symmetric and positive definiteness can also be defined for $(A,(\ ) ^{*})$. 

\begin{definition}
  Given a finite-dimensional semisimple $\mathbb{R}$-algebra and an involution $(\ )^{*}$ as mentioned in Corollary \ref{co:involution}, we shall call an element $a \in A$
  \begin{itemize}
    \item symmetric, if $a ^{*} = a$.
    \item positive definite, if $x \mapsto \Tr(x^{*} a x)$ is a positive definite quadratic form on $A$.
  \end{itemize}
\end{definition}
\begin{lemma}\label{le:nondiv}
\begin{itemize}
\item For any unit $a \in A$, $a^{*} a $ is always symmetric and positive definite. 
\item If $a \in A$ is positive definite then $a$ is a zero non-divisor and $\Tr(a) > 0$.
\end{itemize}
\end{lemma}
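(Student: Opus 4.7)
The plan is to handle the two bullet points separately, each being a direct consequence of the defining properties of a positive involution together with the algebraic identities collected in Lemma \ref{le:enjoyable}.

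For the first bullet, I would first verify symmetry by computing $(a^{*}a)^{*} = a^{*}(a^{*})^{*} = a^{*}a$, using only that $(\ )^{*}$ is an involution reversing products. For positive definiteness, the key trick is the substitution
\begin{align}
\Tr\bigl(x^{*}(a^{*}a)x\bigr) = \Tr\bigl((ax)^{*}(ax)\bigr),
\end{align}
which is non-negative by the defining property of a positive involution and vanishes exactly when $ax = 0$. Since $a$ is a unit, $ax = 0$ forces $x = 0$, so the quadratic form $x \mapsto \Tr(x^{*}(a^{*}a)x)$ is genuinely positive definite.

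For the second bullet, I would argue by contraposition for the non-divisor claim. Suppose $a$ is a zero divisor; then by finite dimensionality the left-multiplication map $L_{a}: A \to A$ is not injective, so there exists a non-zero $x \in A$ with $ax = 0$. Then $\Tr(x^{*}ax) = \Tr(0) = 0$ while $x \neq 0$, contradicting positive definiteness of $a$. Hence $a$ must be a zero non-divisor. For the trace inequality, I would simply test the positive-definite form on $x = 1_{A}$: using $1_{A}^{*} = 1_{A}$ from Lemma \ref{le:enjoyable}, we get $\Tr(1_{A}^{*}a\,1_{A}) = \Tr(a) > 0$, since $1_{A} \neq 0$.

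I do not anticipate any real obstacle here; the whole lemma is formal manipulation with the involution, and the only subtlety is invoking the finite-dimensionality of $A$ to equate ``zero non-divisor'' with ``has a multiplicative inverse,'' which justifies the step $ax = 0 \Rightarrow x = 0$ when $a$ is a unit, and conversely allows us to produce a witness $x \neq 0$ of zero-divisibility if $a$ fails to be invertible.
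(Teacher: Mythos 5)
Your argument is correct and matches the paper's proof exactly: the paper dismisses the first bullet as a "trivial verification" (which you have simply written out, via $(a^{*}a)^{*}=a^{*}a$ and $\Tr(x^{*}a^{*}ax)=\Tr((ax)^{*}(ax))$), and for the second bullet it uses the same zero-divisor contradiction and the same evaluation $\Tr(a)=\Tr(1_{A}^{*}a\,1_{A})>0$.
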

\begin{proof}
  The first is a trivial verification. 
  
  For the second, note that if $a$ is a zero divisor then there exists some non-zero $x \in A$ such that $ax=0 \Rightarrow \Tr(x^{*} a x) = 0$ which contradicts the positive definiteness of $a$. Finally $\Tr(a) = \Tr(1^{*}_{A} a 1_{A}) > 0$.
\end{proof}


The above notions give us an opportunity to describe the following folklore lemma. It is often called the ``norm-trace'' inequality. It is used multiple times in the proof of Theorem \ref{th:MS} in \cite{W58} and appears to be central to the reduction theory of Section \ref{se:reduction_theory}. We state the lemma here for completion, even though the usage has been hidden away by citing Weil.

\begin{lemma}\label{le:normtrace}
  Consider a finite-dimensional semisimple $\mathbb{R}$-algebra $A$ with a positive involution $(\ )^{*}$. Let $a \in A$ be a symmetric positive definite element and let $d = \dim_{\mathbb{R}} A$. Then $\N(a) > 0$, $\Tr(a) > 0$ and 
  \begin{align}
    \frac{1}{d}\Tr(a) \ge  \N(a)^{\frac{1}{d}}.
  \end{align}
\end{lemma}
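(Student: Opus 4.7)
The plan is to reduce the desired inequality to the classical AM--GM inequality applied to a well-chosen list of strictly positive real numbers, namely the eigenvalues of left-multiplication by $a$. The natural operator to consider is $L_a : A \to A$, $L_a(x) = ax$: by the very definitions in the paper, $\Tr(a)$ and $\N(a)$ are the ordinary trace and determinant of $L_a$ acting on the $d$-dimensional real vector space $A$. So if I can exhibit $d$ strictly positive reals $\mu_1, \dots, \mu_d$ with $\Tr(a) = \sum_i \mu_i$ and $\N(a) = \prod_i \mu_i$, positivity of both quantities will be immediate and the inequality $\tfrac{1}{d}\Tr(a) \ge \N(a)^{1/d}$ will just be AM--GM.

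The key step will be to realise $L_a$ as a self-adjoint, positive definite operator on the real Euclidean space $(A, \langle\,\cdot\,,\,\cdot\,\rangle)$, where $\langle x, y\rangle := \Tr(x^* y)$. That this pairing is a genuine real inner product on $A$ is given: bilinearity is clear, positive definiteness is the hypothesis of Corollary~\ref{co:involution}, and symmetry follows from $\Tr(u) = \Tr(u^*)$ (Lemma~\ref{le:enjoyable}) applied to $u = x^* y$, since $(x^* y)^* = y^* x$. Self-adjointness of $L_a$ will then use $a^* = a$:
\[
\langle L_a x, y\rangle = \Tr((ax)^* y) = \Tr(x^* a^* y) = \Tr(x^* a y) = \langle x, L_a y\rangle,
\]
while the operator-positivity $\langle x, L_a x\rangle = \Tr(x^* a x) > 0$ for $x \neq 0$ is precisely the definition of $a$ being positive definite. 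The finite-dimensional real spectral theorem will then supply an orthonormal basis of $A$ diagonalising $L_a$, with strictly positive eigenvalues $\mu_1, \dots, \mu_d$.

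Once this is in place, the conclusion writes itself: $\Tr(a) = \sum_i \mu_i > 0$, $\N(a) = \prod_i \mu_i > 0$, and AM--GM gives $\tfrac{1}{d} \sum_i \mu_i \ge (\prod_i \mu_i)^{1/d}$, which is the inequality claimed. I do not anticipate a significant obstacle; the only point that requires a moment's care is the symmetry check for $\langle\,\cdot\,,\,\cdot\,\rangle$, and that is handled by $\Tr(u) = \Tr(u^*)$ as noted above.
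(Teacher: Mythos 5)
Your proof is correct and takes essentially the same route as the paper: both realize left-multiplication $L_a$ as a self-adjoint, positive-definite operator on the real inner-product space $(A,\langle x,y\rangle=\Tr(x^*y))$, invoke the real spectral theorem to get strictly positive eigenvalues, and then apply AM--GM. Your phrasing in terms of the abstract operator rather than its matrix $[\Tr(e_i^* a e_j)]$ is a cosmetic difference only.
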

\begin{proof}
  This is just the arithmetic-geometric means inequality. Let us elaborate how.

  We know that $x \mapsto \Tr(x^* y)$ is an inner product on $A$. With respect to this, construct an orthonormal basis $e_1, e_2 ,\dots, e_d$. Set $a_{ij} = \Tr(e_i^{*} a e_{j})$ which are the matrix entries of left-multiplication by $a$ with respect to the basis $\{ e_{i}\}_{i=1}^{d}$, i.e. for $\{ r_i\}_{i=1}^{d} \subseteq \mathbb{R}^{d}$, $a ( \sum_{i} r_i e_i) = \sum_{i}^{} ( \sum_{j} a_{ij}r_j) e_i$. Since $a$ is symmetric, we get that $a_{ij} = a_{ji}$. Furthermore, by the positive definiteness of $x \mapsto \Tr(x^{*} a x)$, the matrix $a_{ij}$ can be seen to be positive definite as a real matrix by substituting $x = \sum_{i=1}^{d} x_i e_i$.

  Hence using the spectral theorem for real positive definite symmetric matrices, $a_{ij}$ is diagonalizable matrix with respect to an orthonormal change of basis and has real and positive eigenvalues (i.e. the diagonal entries). Then trace is the sum of those eigenvalues and the norm is the product. The inequality is then exactly the arithmetic-geometric inequality on those eigenvalues.
\end{proof}

\subsection{Cholesky decomposition}
\label{se:cholesky}

Let $A$ be a semisimple $\mathbb{R}$-algebra with a positive involution $(\ )^*$. The algebra $M_k(A)$ is also a semisimple $\mathbb{R}$-algebra and the involution $(\ )^{*}$ can be easily extended to $M_k(A)$ via the mapping $[a_{ij}] \mapsto [a_{ji}^{*}]$. We will denote this involution with same notation $(\ )^{*}$. With this, the meaning of positive definite and symmetric matrices in $M_k(A)$ is unambiguous. For clarity, we will distinguish between the norms and traces of $A$ and $M_k(A)$ by using the notations $\Tr_{A}, \N_{A}, \Tr_{M_k(A)}, \N_{M_k(A)}$ whenever appropriate.

For any $a \in M_k(A)$, we can create a bilinear form on $$\beta_{a}:A^{k} \times A^{k} \rightarrow \mathbb{R}$$ as $\beta_{a}(x,y) = \sum_{i,j=1}^{k} \Tr_{A}( x_{i}^{*} a_{ij} y_{j})$. 
The following lemma then approves that the conventional intuition of positive definiteness is in confirmation with the definition above.
\begin{lemma}
  An element $a \in M_k(A)$ is positive definite if and only if $\beta_{a}$ is a positive definite quadratic form on $A^{k}$ as an $\mathbb{R}$-vector space.
\end{lemma}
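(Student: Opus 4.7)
The plan is to reduce the $\mathbb{R}$-trace $\Tr_{M_k(A)}(x^{*} a x)$, which governs positive definiteness of $a \in M_k(A)$ per the definition just given, to a sum of values of $\beta_a$ evaluated on the columns of $x$. Once the explicit identity is in hand, both implications of the equivalence will be immediate.

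First I would establish the auxiliary identity
\begin{align}
  \Tr_{M_k(A)}(m) = k \sum_{i=1}^{k} \Tr_A(m_{ii})
\end{align}
for every $m \in M_k(A)$. The argument is that $M_k(A)$, as a left module over itself via left multiplication, decomposes as $k$ copies of the column space $A^k$ (one summand per column). The action of $m$ on a single copy of $A^k$ is block-diagonal with blocks given by the left-multiplication maps $L_{m_{ij}} \colon A \to A$, contributing $\sum_i \Tr_A(m_{ii})$ to the $\mathbb{R}$-trace. Summing over the $k$ column-summands introduces the factor of $k$. This step is where the ``algebra-style'' trace $\Tr_{M_k(A)}$ and the ``matrix-style'' diagonal sum $\sum_i m_{ii}$ must be kept separate; that bookkeeping is the only subtle point in the argument.

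Next I would compute the diagonal entries $(x^{*} a x)_{ll} = \sum_{i,j} x_{il}^{*} a_{ij} x_{jl}$ directly, substitute into the identity above, and apply $\mathbb{R}$-linearity of $\Tr_A$ to obtain
\begin{align}
  \Tr_{M_k(A)}(x^{*} a x) = k \sum_{l=1}^{k} \beta_a\bigl(x^{(l)}, x^{(l)}\bigr),
\end{align}
where $x^{(l)} \in A^k$ denotes the $l$-th column of $x$.

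The equivalence is then transparent. If $\beta_a$ is positive definite on $A^k$, the right-hand side is a sum of nonnegative terms which vanishes exactly when every column of $x$ vanishes, i.e.\ when $x = 0$; hence $a$ is positive definite. Conversely, given any $v \in A^k \setminus \{0\}$, I would take $x \in M_k(A)$ to be the matrix with $v$ in its first column and zeros elsewhere; then $x \neq 0$, so positive definiteness of $a$ forces $k\,\beta_a(v,v) = \Tr_{M_k(A)}(x^{*} a x) > 0$, and positive definiteness of $\beta_a$ follows. The hardest step is genuinely just the first one, and it is purely formal once the distinction between the two natural traces on $M_k(A)$ is tracked carefully.
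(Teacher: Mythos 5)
Your proof is correct, and the paper in fact leaves this lemma as an exercise, so there is no official argument to compare against. The key identity $\Tr_{M_k(A)}(m) = k \sum_{i=1}^{k} \Tr_A(m_{ii})$ is verified exactly as you describe: $M_k(A)$, as a left $M_k(A)$-module, splits into $k$ copies of the column space $A^k$, and left multiplication by $m$ on a single copy of $A^k$ has $\mathbb{R}$-trace $\sum_i \Tr_A(m_{ii})$ since only the diagonal blocks $L_{m_{ii}}$ contribute. Combined with $(x^{*}ax)_{ll} = \sum_{i,j} x_{il}^{*} a_{ij} x_{jl}$ this yields $\Tr_{M_k(A)}(x^{*}ax) = k \sum_{l} \beta_a(x^{(l)}, x^{(l)})$, and both directions of the equivalence follow immediately as you argue (the converse by placing $v$ in a single column and zeros elsewhere). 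The factor of $k$ is a genuine subtlety worth flagging, since it reflects the difference between the regular-representation trace $\Tr_{M_k(A)}$ used in the paper's Definition of positive definiteness and the naive reduced trace $\sum_i \Tr_A(m_{ii})$; your bookkeeping here is right, and since $k > 0$ the extra factor is harmless to the positivity statement.
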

\begin{proof}
  We leave this for the reader.
\end{proof}

This lemma leads to the following decomposition for quadratic forms $\beta_{a}$ induced by symmetric positive definite matrices $a$.
What the upcoming theorem is really going to tell us is that the quadratic form $\beta_{a}$ can be ``diagonalized'' up to a ``triangular'' change of basis.

When $A =\mathbb{R}$, this is simply the Cholesky decomposition of real symmetric positive definite matrices. In \cite{W58}, the theorem below is referred to as the Babylonian reduction theorem, perhaps because it is spiritually similar to ``completing the square'' in a quadratic equation of one variable. 
\begin{theorem}\label{th:cholesky}
  Let $a \in M_k(A)$ be a symmetric positive definite matrix. Then there is an upper triangular matrix $t \in M_k(A)$ with $1_{A}$ on the diagonal entries, and a diagonal matrix $d$ with symmetric positive definite elements of $A$ on the diagonal such that 
  \begin{align}
    a = t^{*} d t.
  \end{align}
  That is, writing explicitly in terms of $A$-valued matrix entries, we can find $d,t \in M_{k}(A)$ such that 
  \begin{align}
    \begin{bmatrix} 
      a_{11}  & a_{12} &  & a_{1k} \\
      a_{21}  & a_{22} &  & a_{2k} \\
        &                & \ddots &  \\
      a_{k1}  & a_{k2} &  & a_{kk} 
    \end{bmatrix}
    = & 
    \begin{bmatrix} 
      1_A  &        &  &        \\
      t_{12}^{*}  & 1_A &  &        \\
            & \vdots               & \ddots &  \\
	    t_{1k}^{*}  & t_{2k}^{*} &  & 1_A      
    \end{bmatrix}
    \begin{bmatrix} 
      d_{11}  &        &  &        \\
              & d_{22} &  &        \\
        &                & \ddots &  \\
              &        &  & d_{kk} 
    \end{bmatrix}
    \begin{bmatrix} 
      1_A  & t_{12} & \dots & t_{1k} \\
              & 1_A      &  & t_{2k} \\
        &                & \ddots &  \\
              &        &  & 1_A      
    \end{bmatrix}\\
     = & \label{eq:decomp}
    \begin{bmatrix} 
      d_{11}  & d _{11}t_{12} & \dots & d _{11} t_{1k} \\
      t_{12}^{*} d_{11}        & t_{21}^{*}d_{11} t_{12} + d_{22}      &  &  t_{12}^{*} d_{11} t_{1k} +  d_{22}t_{2k} \\
     \vdots &                & \ddots &  \\
     t_{1k}^{*} d_{11}       &        &  & \sum_{i=1}^{k} t_{ki}^{*} d_{ii} t_{ik}
    \end{bmatrix}.\\
  \end{align}
\end{theorem}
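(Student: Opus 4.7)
The plan is to prove Theorem \ref{th:cholesky} by induction on $k$, mimicking the classical Cholesky decomposition for real positive definite symmetric matrices while being careful with the involution $(\ )^{*}$ on $A$. The base case $k=1$ is immediate: take $t = 1_A$ and $d = a$, which is symmetric positive definite by hypothesis.

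For the inductive step, I would write $a$ in block form as
\begin{align*}
a = \begin{bmatrix} a_{11} & b \\ b^{*} & a' \end{bmatrix},
\end{align*}
with $a_{11} \in A$, $b$ a row in $A^{k-1}$, and $a' \in M_{k-1}(A)$. The symmetry $a^{*} = a$ under the blockwise involution forces $a_{11}^{*} = a_{11}$ and $(a')^{*} = a'$. Restricting the quadratic form $\beta_{a}$ to vectors of the form $(e,0,\dots,0)$ shows that $a_{11}$ is positive definite in $A$; by Lemma \ref{le:nondiv} it is a zero non-divisor, hence, by finite-dimensionality of $A$, a unit. This lets me set $u = a_{11}^{-1} b$ and form the Schur complement $s = a' - b^{*} a_{11}^{-1} b \in M_{k-1}(A)$, so that
\begin{align*}
t_{1} = \begin{bmatrix} 1_A & u \\ 0 & I_{k-1} \end{bmatrix}, \qquad d_{1} = \begin{bmatrix} a_{11} & 0 \\ 0 & s \end{bmatrix}
\end{align*}
satisfy $t_{1}^{*} d_{1} t_{1} = a$ by a direct block computation.

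The main content is then to verify that $s$ is itself symmetric positive definite in $M_{k-1}(A)$, which is what unlocks the induction. Symmetry follows from the identity $(a_{11}^{-1})^{*} = (a_{11}^{*})^{-1} = a_{11}^{-1}$ provided by Lemma \ref{le:enjoyable}. For positive definiteness, given any non-zero $y \in A^{k-1}$ I would test $s$ against $y$ by forming the vector $x = (-a_{11}^{-1} b y,\, y) \in A^{k}$; a short calculation shows $x^{*} a x = y^{*} s y$, so $\Tr_{A}(y^{*} s y) = \beta_{a}(x,x) > 0$ since $x \ne 0$. Applying the inductive hypothesis to $s$ produces $s = (t')^{*} d' t'$ with $t'$ upper triangular with $1_A$ on the diagonal and $d'$ diagonal with symmetric positive definite entries; I then assemble the final decomposition by taking
\begin{align*}
t = \begin{bmatrix} 1_A & u \\ 0 & t' \end{bmatrix}, \qquad d = \begin{bmatrix} a_{11} & 0 \\ 0 & d' \end{bmatrix},
\end{align*}
and checking $t^{*} d t = a$ using the block factorisation above. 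The main obstacle here is purely notational, namely keeping the involution and block products straight; the argument is essentially the usual Schur-complement reduction transcribed into the $A$-valued setting.
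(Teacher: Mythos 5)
Your proof is correct, and it is the standard Schur-complement (completing-the-square) induction, carefully adapted to the involution $(\;)^{*}$: the only nontrivial points are that $a_{11}$ is a unit (positive definite $\Rightarrow$ zero non-divisor $\Rightarrow$ unit in a finite-dimensional algebra), that $(a_{11}^{-1})^{*}=a_{11}^{-1}$ makes the Schur complement $s=a'-b^{*}a_{11}^{-1}b$ symmetric, and that the test vector $x=(-a_{11}^{-1}by,\,y)$ gives $\Tr_A(x^{*}ax)=\Tr_A(y^{*}sy)$ so $s$ inherits positive definiteness — all of which you verify. The paper itself supplies no proof, merely citing Weil \cite[Theorem 1]{W58} and noting that Weil calls this the ``Babylonian reduction'' (completing the square); your argument is exactly that reduction, so you have in effect written out the proof the paper delegates.
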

\begin{proof}
  See \cite[Theorem 1]{W58}

\end{proof}


\begin{remark}
  The decomposition above is unique, because the elements $d_{ii}$ and $t_{ij}$ are completely determined by Equation (\ref{eq:decomp}).
\end{remark}
\begin{remark}
  It is possible to view $A^{k}$  as a $(k \dim_{\mathbb{R}}A)$-dimensional vector space over $\mathbb{R}$ and all the matrices in $M_k(A)$ can be seen as block matrices with each entry $a_{ij}$ being replaced by its left-multiplication matrix as an element of $A$. From this point of view, Theorem \ref{th:cholesky} is the same thing as the block matrix variant of the Cholesky decomposition.
\end{remark}

There is a further improvement that is possible to be done here using the proposition below.

\begin{proposition}
  Suppose that $a \in A$ is a positive definite symmetric element. Then, there exists another positive definite symmetric element $b \in A$ such that $b^{2} = a$.
  \label{pr:squaring}
\end{proposition}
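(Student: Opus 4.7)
The plan is to reduce the question about the algebra $A$ to the spectral theorem for a single self-adjoint operator on a real inner product space, and then lift the square root back into $A$ using the left regular representation.

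First I would equip the real vector space $A$ with the inner product $\langle x,y\rangle = \Tr(x^{*}y)$, which is positive definite by the very definition of a positive involution (and by Lemma \ref{le:enjoyable}). Consider the left multiplication operator $L_{a}:A\to A$, $x\mapsto ax$. Because $a^{*}=a$, one checks that
\begin{align}
  \langle L_{a}x,y\rangle = \Tr((ax)^{*}y) = \Tr(x^{*}a^{*}y) = \Tr(x^{*}ay) = \langle x,L_{a}y\rangle,
\end{align}
so $L_{a}$ is self-adjoint. Since $a$ is positive definite in the sense of the text, $\langle x,L_{a}x\rangle = \Tr(x^{*}ax) > 0$ for every $x\neq 0$, i.e. $L_{a}$ is a positive definite self-adjoint endomorphism of the real inner product space $A$.

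By the real spectral theorem, $L_{a}$ diagonalizes over $\mathbb{R}$ with strictly positive eigenvalues $\lambda_{1},\dots,\lambda_{r}$. Using Lagrange interpolation, pick a real polynomial $p(X)\in\mathbb{R}[X]$ with $p(\lambda_{i}) = \sqrt{\lambda_{i}}$ for each $i$. Then $S := p(L_{a})$ is a self-adjoint positive definite operator with $S^{2}=L_{a}$. Now define
\begin{align}
  b := p(a) \in A.
\end{align}
Since the left-regular representation $L:A\to \mathrm{End}_{\mathbb{R}}(A)$ is an $\mathbb{R}$-algebra homomorphism, $L_{b} = p(L_{a}) = S$, hence $L_{b^{2}} = L_{b}^{2} = S^{2} = L_{a}$. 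Evaluating at $1_{A}$ (which is allowed because $L$ is injective by unitality of $A$) gives $b^{2}=a$.

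It remains to verify that $b$ is symmetric and positive definite. Since $p$ has real coefficients and $a^{*}=a$, we get $b^{*} = p(a^{*}) = p(a) = b$. For positive definiteness, note that for any $x\in A$,
\begin{align}
  \Tr(x^{*}b x) = \langle x, L_{b} x\rangle = \langle x, S x\rangle,
\end{align}
which is strictly positive for $x\neq 0$ because $S$ is positive definite. This completes the argument. The only slightly delicate step, and the one I would spell out carefully, is the construction of the real interpolating polynomial $p$ for the square root — it is what allows us to move the square root from the operator $L_{a}$ back into the algebra element $b=p(a)$, and it is what guarantees $b$ is symmetric (real coefficients together with $a^{*}=a$).
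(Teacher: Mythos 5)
Your proof is correct and is essentially the standard argument in the literature, which is also the approach taken in Weil's proof (Lemma 9.5 of \cite{W58}) that the paper cites rather than reproducing. The key ideas — self-adjointness and positivity of $L_{a}$ with respect to $\langle x,y\rangle = \Tr(x^{*}y)$, polynomial interpolation of $\sqrt{\cdot}$ on the spectrum, and pulling the square root back to $A$ via injectivity of the left regular representation — are exactly the right ones, and they also deliver symmetry of $b$ for free since the interpolating polynomial has real coefficients.
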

\begin{proof}
  See \cite[Lemma 9.5]{W58}.


\end{proof}

\begin{corollary} \label{co:squareroot}
  For every positive definite symmetric element $a \in A$, $a = b^{*}b$ for some positive defining symmetric $b \in A$.

  Every element $a \in M_k(A)$ that is positive definite can be written in the form of
  \begin{align}
    a  = t^{*} b^{*} b t = p^{*}p,
  \end{align}
  where $t \in M_k(A)$ is upper triangular with $1_{A}$ on the diagonal, $b \in M_k(A)$ is diagonal and $p \in M_k(A)$ is just upper triangular.
\end{corollary}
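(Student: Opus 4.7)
The plan is to reduce both assertions to the tools already in hand, namely the Babylonian (Cholesky) decomposition of Theorem \ref{th:cholesky} and the square-root result of Proposition \ref{pr:squaring}.

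For the first assertion, I would simply invoke Proposition \ref{pr:squaring}: given a positive definite symmetric $a \in A$, there is a positive definite symmetric $b \in A$ with $b^{2}=a$. Since symmetry means $b^{*}=b$, one has $b^{*}b = b^{2} = a$, and this is exactly the desired factorization. Nothing more is needed here; all the work was pushed into Proposition \ref{pr:squaring}.

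For the second assertion, I would proceed in three steps. First, apply Theorem \ref{th:cholesky} to write $a = t^{*} d t$ with $t \in M_k(A)$ upper triangular with $1_{A}$ on the diagonal and $d \in M_k(A)$ diagonal whose entries $d_{ii}$ are symmetric positive definite elements of $A$. Second, apply the first assertion (equivalently, Proposition \ref{pr:squaring}) to each $d_{ii}$ separately, producing symmetric positive definite $b_{ii} \in A$ with $b_{ii}^{2} = d_{ii}$. Assembling the $b_{ii}$ into a diagonal matrix $b \in M_k(A)$, the involution on $M_k(A)$ acts on a diagonal matrix with symmetric entries as the identity, so $b^{*}=b$ and $b^{*}b = b^{2} = d$. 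Third, substitute back to get
\begin{align}
  a \;=\; t^{*} d\, t \;=\; t^{*} b^{*} b\, t \;=\; (bt)^{*}(bt),
\end{align}
and set $p = bt$. Since $b$ is diagonal and $t$ is upper triangular, their product $p$ is upper triangular, which is precisely the $p^{*}p$ factorization claimed.

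There is no real obstacle: the decomposition is essentially automatic once Theorem \ref{th:cholesky} and Proposition \ref{pr:squaring} are available. The only small point worth double-checking is the computation $b^{*}=b$ for the diagonal matrix $b$ with symmetric entries, which amounts to unwinding the extension of the involution $(\ )^{*}$ from $A$ to $M_k(A)$ via $[a_{ij}] \mapsto [a_{ji}^{*}]$; a diagonal matrix is unaffected by the transposition, and the entrywise involution then acts as the identity on each $b_{ii}$ by symmetry.
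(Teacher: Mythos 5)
Your proposal is correct and matches the paper's own proof: apply the Cholesky decomposition of Theorem \ref{th:cholesky} to get $a=t^{*}dt$, then use Proposition \ref{pr:squaring} (via the first assertion) to split each diagonal entry $d_{ii}=b_{ii}^{*}b_{ii}$ and assemble $p=bt$. The extra care you take to verify $b^{*}=b$ for the assembled diagonal matrix is a worthwhile sanity check that the paper leaves implicit.
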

\begin{proof}
  Use Theorem \ref{th:cholesky} and decompose $a$ as $t^{*}dt$. Then each diagonal entry of $d$ can be split as $d_{ii} = b_{ii}^{*}b_{ii}$ according to the previous corollary.
\end{proof}

\section{ Setting up the Haar measure on $G$}
\label{se:haar_on_G}

This section is adding to the description of our choice on Haar measure on $G = SL_k(D_{\mathbb{R}})$. The main purpose is to fill in the missing details about $G$ in Section \ref{se:reduction_theory}.

\begin{proposition} \label{pr:iwasawa}
  The following map is a surjective open map. As a smooth map, it is a submersion.
\begin{align}
  K \times A_0 \times N & \rightarrow G \\
  (\kappa,a,n) & \mapsto \kappa a n.
\end{align}
\end{proposition}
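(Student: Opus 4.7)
My plan is to prove the two claims separately. For surjectivity I will perform a matrix Gram--Schmidt via Cholesky on $g^{*}g$, and for the submersion property I will compute the differential at a general point and reduce to an infinitesimal Iwasawa identity at the identity.

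Given $g \in G$, I apply Theorem~\ref{th:cholesky} to the positive-definite symmetric $g^{*}g \in M_k(D_{\mathbb{R}})$ to get $g^{*}g = t^{*} d t$ with $t \in N$ (upper triangular with $1_A$ on the diagonal) and $d$ diagonal with positive-definite symmetric entries. Corollary~\ref{co:squareroot} then provides a diagonal $b$ with each $b_{ii}$ positive-definite symmetric and $d = b^{*}b$. Setting $p = bt$ and $\kappa := g p^{-1}$, we have $\kappa^{*}\kappa = p^{-*} g^{*} g p^{-1} = 1$. So $g = \kappa \cdot b \cdot t$; that $t \in N$ is by construction, that $b \in A_0$ follows from $\N_{D_{\mathbb{R}}}(b_{ii}) > 0$ (Lemma~\ref{le:normtrace}) once $\N(b) = 1$ is known, and $\N(\kappa) = 1$ reduces to the sign argument discussed below.

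For openness, the multiplication map is smooth, so we need only verify its differential is surjective at every $(\kappa_0, a_0, n_0)$. Differentiating the one-parameter family $\kappa_0 e^{sX_1} \cdot a_0 e^{sX_2} \cdot n_0 e^{sX_3}$ at $s=0$ and left-translating back to the identity produces the image subspace
\begin{align}
  \Ad((a_0 n_0)^{-1})\mathfrak{k} + \Ad(n_0^{-1})\mathfrak{a}_0 + \mathfrak{n} \subseteq \mathfrak{g}.
\end{align}
Applying the isomorphism $\Ad(a_0 n_0)$ and using that $A_0$ normalizes $\mathfrak{a}_0$ while the upper triangular group $A_0 N$ normalizes $\mathfrak{n}$ rewrites this as $\mathfrak{k} + \mathfrak{a}_0 + \mathfrak{n}$. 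So surjectivity reduces to the identity $\mathfrak{g} = \mathfrak{k} + \mathfrak{a}_0 + \mathfrak{n}$. For the latter, given trace-zero $X \in \mathfrak{g}$, I set $N'_{ij} := X_{ij} + X_{ji}^{*}$ for $i < j$ (strictly upper) and $A' := \mathrm{diag}(X_{11}, \dots, X_{kk})$ (trace-zero diagonal); one checks that $X - N' - A'$ has zero diagonal, strictly upper part $-X_{ji}^{*}$ at position $(i,j)$ and strictly lower part $X_{ij}$, hence is skew-Hermitian and lies in $\mathfrak{k}$.

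The main obstacle I foresee is the sign analysis needed to conclude $\N(\kappa) = 1$ rather than merely $\N(\kappa) = \pm 1$. The identity $\kappa^{*}\kappa = 1$ together with $\N(x^{*}) = \N(x)$ on $M_k(D_{\mathbb{R}})$ (verified factor-by-factor via Artin--Wedderburn) yields only $\N(\kappa)^{2} = 1$. The positive sign is then forced by positivity: each $b_{ii}$ positive-definite symmetric has $\N_{D_{\mathbb{R}}}(b_{ii}) > 0$ (Lemma~\ref{le:normtrace}), so $\N(b) > 0$; since $\N(t) = 1$ (upper triangular with $1_A$ on the diagonal) and $\N(g) = 1$, we get $\N(\kappa) = 1/\N(b) > 0$, hence $\N(\kappa) = 1$ and $\N(b) = 1$. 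Once this bookkeeping is handled, both halves of the proposition combine to give a smooth surjection with surjective differential, hence a submersion, hence open.
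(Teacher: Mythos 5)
Your proof is correct and takes essentially the same route as the paper: surjectivity via the Babylonian/Cholesky decomposition (Theorem~\ref{th:cholesky} and Corollary~\ref{co:squareroot}), and the submersion property by transporting the differential to the identity and checking the Lie algebra identity $\mathfrak{g}=\mathfrak{k}+\mathfrak{a}_0+\mathfrak{n}$. Your use of the adjoint action $\Ad((a_0 n_0)^{-1})\mathfrak{k}+\Ad(n_0^{-1})\mathfrak{a}_0+\mathfrak{n}$ is exactly the coordinate change in the paper's commutative diagram (where $n'\mapsto a^{-1}n'a$ plays the role of $\Ad(a^{-1})$); you are simply more explicit about invoking that $A_0$ normalizes $\mathfrak{a}_0$ and $B=A_0N$ normalizes $\mathfrak{n}$, and about writing down the skew-Hermitian complement $X-N'-A'$. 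You are also more careful than the paper about the sign bookkeeping showing $\N(\kappa)=1$ (the paper argues $\N(a)=\pm1$ and uses positivity of the diagonal entries, leaving the step for $\kappa$ implicit); your argument there is sound, and correctly notes that $\N(x^{*})=\N(x)$ plus positivity of $\N(b)$ force the sign. One stylistic streamlining you already take: the paper separately invokes the constant rank theorem to get openness, but as you observe, openness is automatic once the map is a submersion.
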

\begin{proof}
  First, let us see that this multiplication map is surjective. 

  For any $g \in G$, we know that $g^{*} g \in M_k(A)$ is a positive-definite symmetric matrix. Consequently, by Theorem \ref{th:cholesky} and Corollary \ref{co:squareroot}, we have a decomposition $g^*g = n^{*} a^{*} a n = (an)^{*} a n$, for $n \in N$ and $a$ being some diagonal matrix. Clearly $N(a)= \pm 1$ for this to hold, but since $a$ can be assumed to be positive definite in Corollary \ref{co:squareroot}, we can ensure that $N(a_{ii}) > 0 $ and so $a \in A_{0}$.
  Now $g(an)^{-1} =  (g^{*})^{-1}(an)^{*}$ which means that $g(an)^{-1}$ is preserved under the ``conjugate inverse'' automorphism, so it lies in $K$. So $g = \kappa an $ for some $\kappa \in K$.
  

  Using the following transportation scheme, we can see that the given map has a constant rank. The following commutative diagram demonstrates that the rank at $(\kappa,a,n)$ is the same as the rank at $(e,e,e)$, wherein the vertical arrows are the derivatives of the respective indicated maps and are therefore isomorphisms of tangent spaces.

\[\xymatrixcolsep{4pc}\xymatrix{
    \left(\kappa \kappa',a'a, ( a^{-1}n'a ) n \right) & T_{(\kappa,a,n)}(K \times A_{0} \times N ) \ar[r] & T_{\kappa an }G & \kappa g an  \\
    (\kappa',a',n')\ar@{|->}[u]  & T_{(e,e,e)}( K \times A_{0} \times N ) \ar[r] \ar[u] & T_{e}G \ar[u] & g \ar@{|->}[u]\\
  }
\]

To learn the rank on $(e,e,e)$, we note that on the level of Lie algebras the lower horizontal map in the diagram, up to appropriate identifications, is just the addition map. More precisely, we can make the identification of $T_{(e,e,e)}( K \times A_{0} \times N ) \simeq T_{e}K \times T_{e} A_{0} \times T_{e} N $ and identifying $T_{e}G,T_{e} K , T_{e} A_{0}$ and $T_{e} N $ as subspaces of $T_{e} GL_{k}(D_{\mathbb{R}}) \simeq M_k(D_{\mathbb{R}})$ as follows.

\begin{align}
  T_{e}G&  =   \{ g \in  M_k(D_{\mathbb{R}}) \ | \ \Tr(g) = 0 \},\\ 
  T_{e}K &  = \{ \kappa \in M_k(D_{\mathbb{R}}) \ | \ \kappa^{*} +  \kappa = 0  \}, \\
  T_{e} A_{0} & = \{ a \in M_k(D_{\mathbb{R}}) \ | \ a \text{ is diagonal}, \Tr(a) = 0\}, \\
  T_{e} N & =  \{ n \in M_k(D_{\mathbb{R}}) \ | \ n \text{ is strictly upper triangular }\}.
\end{align}

Since every traceless matrix in $M_k(A)$ can be written as the sum of matrices in the three subspaces above, the bottom map is surjective and hence overall, the given map is a submersion using the global rank theorem of differential geometry.

To see that it is an open map, it is sufficient to show that the image $U_1U_2 U_3 \subseteq G$ of a basic open set of the product topology $U_1 \times U_{2} \times U_{3} \subseteq K \times A_{0} \times N$ is open\footnote{for any continuous map of topological spaces $f:X\rightarrow Y$, $f(\bigcup_{i \in I} U_i) = \bigcup_{i \in I} f(U_{i})$.}. For this goal, it is sufficient to show this when $U_1  \times U_{2} \times U_{2}$ is a sufficiently small neighbourhood of the identity $(e,e,e)\in K \times A_{0} \times N $, as we can transport such a neighbourhood and get a neighbourhood $(\kappa,a,n)\in K \times A_{0} \times N$ of the form $\kappa U_{1} \times U_{2} a \times  ( a^{-1} U_{3} a ) n  $, whose image must be $\kappa U_1 U_2 U_3 an \subseteq GL_k(A)$. 
For this, it is also sufficient to show that the given multiplication map restricted to $U_1 \times U_2 \times U_3$ is an open map for sufficiently small $U_1,U_2,U_3$.

We use the constant rank theorem of differential geometry to do this. 
Let $F: K\times A_{0} \times N \rightarrow G$ be the given multiplication map. If $U_1, U_2, U_3$ are sufficiently small, then there exists an open neighbourhood $U'_1 \times U_{2}' \times U'_{3}\subseteq T_{e} K \times T_{e} A_{0} \times T_{e} N $ of $(0,0,0)$ with homeomorphisms $u_i:U_{i} \rightarrow U'_i$ for $i \in \{ 1,2,3\}$ and an open neighbourhood $V \subseteq G$ containing identity along with a homeomorphism $v : V \rightarrow V' \subseteq T_{e} G$, $V'$ containing $0$, such that $F|_{U_1 \times U_2 \times U_{3}} = v^{-1} \circ dF_{(e,e,e)} \circ u$, where the map $u = u_1 \times u_2 \times u_3 : U_1 \times U_2 \times U_3 \rightarrow U'_1 \times U'_2 \times U'_3$. But $dF_{(e,e,e)}$ is an open map, because it is a surjective linear map and hence we are done.
\end{proof}

\begin{corollary}\label{co:kbopenmap}
  Let $B = A_{0} N = N A_{0} \subset G$ be the closed subgroup of upper-triangular matrices. Then the following is also an open surjective map.
  \begin{align}
    K \times B \rightarrow & G \\ 
    (\kappa, b) \mapsto & \kappa b.
  \end{align}
  \begin{proof}
    Surjectivity is clear from Proposition \ref{pr:iwasawa} if we write $b = an $ for some $n \in N $ and $a \in A_{0}$. To show that the map is open, 
   the proof is very similar to the Proposition \ref{pr:iwasawa} and we leave this to the reader for verification.
  \end{proof}

\end{corollary}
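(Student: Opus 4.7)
The plan is to deduce this corollary from Proposition \ref{pr:iwasawa} without redoing the constant-rank argument. Surjectivity is immediate: for any $g \in G$, Proposition \ref{pr:iwasawa} writes $g = \kappa a n$ with $\kappa \in K$, $a \in A_0$, $n \in N$, and then $b := a n$ lies in $A_0 N = B$, so $g = \kappa b$.

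For openness, I would first show that the multiplication map
\begin{align}
  \mu : A_0 \times N \to B, \qquad (a,n) \mapsto a n,
\end{align}
is a homeomorphism (in fact a diffeomorphism). Surjectivity of $\mu$ is the very definition of $B$. If $a n = a' n'$ with $a, a' \in A_0$ and $n, n' \in N$, then $(a')^{-1} a = n' n^{-1}$ lies in $A_0 \cap N$, which is trivial since a diagonal matrix whose diagonal entries are all $1_D$ must be the identity; this gives injectivity. Smoothness of $\mu$ and of $\mu^{-1}$ follows by reading off matrix entries: given $b \in B$, the factorization is reconstructed by $a = \mathrm{diag}(b_{11}, \dots, b_{kk}) \in A_0$ and $n = a^{-1} b \in N$, both of which depend smoothly on $b$. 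Equivalently, since $A_0$ normalizes $N$ (conjugation of an upper-triangular matrix with $1_D$ on the diagonal by a diagonal matrix preserves this form), $B = A_0 \ltimes N$ is a Lie-group semidirect product and $\mu$ is its canonical trivialization.

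Given $\mu$, the map $\Phi := (\mathrm{id}_K, \mu) : K \times A_0 \times N \to K \times B$ is a homeomorphism, and the multiplication map $F : K \times A_0 \times N \to G$ of Proposition \ref{pr:iwasawa} factors as $F = m \circ \Phi$, where $m : K \times B \to G$ is the map we want to analyse. Therefore $m = F \circ \Phi^{-1}$ is the composition of an open map with a homeomorphism, hence open.

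The only step that requires actual care is the homeomorphism statement for $\mu$, which is the main obstacle but only in a cosmetic sense: it is a routine calculation purely about upper-triangular matrices in $M_k(D_{\mathbb{R}})$ and no further analytic machinery beyond Proposition \ref{pr:iwasawa} intervenes.
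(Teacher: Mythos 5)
Your argument is correct, and it is a genuinely cleaner route than what the paper suggests. The paper's proof sketch says the openness argument is ``very similar to Proposition~\ref{pr:iwasawa},'' i.e.\ the intended proof is to rerun the constant-rank/submersion calculation for the map $K\times B\to G$ directly. You instead avoid any second rank computation by observing that $\mu:A_0\times N\to B$, $(a,n)\mapsto an$, is a diffeomorphism (with smooth inverse $b\mapsto(\mathrm{diag}(b_{11},\dots,b_{kk}),\,\mathrm{diag}(b_{ii})^{-1}b)$, using $A_0\cap N=\{1\}$ and the normalization $(ana^{-1})_{ij}=a_{ii}n_{ij}a_{jj}^{-1}$), so that $K\times B\to G$ is just $F\circ\Phi^{-1}$ with $F$ the map of Proposition~\ref{pr:iwasawa} and $\Phi=(\mathrm{id}_K,\mu)$ a homeomorphism; openness is then inherited immediately. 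This buys you the corollary with no new differential-geometric input: the only nontrivial content is the elementary matrix-entry calculation for $\mu^{-1}$, whereas the paper's sketched route would require re-checking that the relevant Lie algebra decomposition spans and invoking the constant rank theorem a second time. One small detail worth making explicit in your write-up: when you reconstruct $a=\mathrm{diag}(b_{11},\dots,b_{kk})$ you should note that $a\in A_0$ (the diagonal entries have positive norm because $b_{ii}=a'_{ii}$ for the factorization $b=a'n'$ defining $B$, and $\N(a)=\N(b)=1$ since $\N(n)=1$), so the inverse map really does land in $A_0\times N$.
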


\begin{remark}
  The map in Proposition \ref{pr:iwasawa} is generally not injective. Indeed, if $\kappa' \in K\cap A_{0}$, then $(\kappa \kappa'^{-1} , \kappa' a, n )$ and $(\kappa,a,n)$ are mapped to the same element $\kappa an \in G$. 

  This is the only obstruction to injectivity. That's to say that, two elements of $K \times A_{0} \times N$ have the same image if and only if they are in the above situation. With the usual Iwasawa decomposition for $SL_k(\mathbb{R})$, the map is indeed injective since $K \cap A_{0} = \{ 1_{SL_k(\mathbb{R})}\}$.
\end{remark}

We will now use the following proposition to settle some more technicalities about our decomposition above. 

\begin{proposition}\label{pr:kcompact}
  \begin{enumerate}
    \item $K \subset G$ is a compact group.
    \item $K \cap  B= K \cap A_{0}$, which is also a compact subgroup of $G$.
  \end{enumerate}
\end{proposition}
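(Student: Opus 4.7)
The plan is to establish part (i) by exhibiting $K$ as a closed subset of a sphere in the Euclidean structure on $M_k(D_{\mathbb{R}})$ induced by the positive involution, and then to pin down $K\cap B$ using the fact that the involution reverses the triangular shape of a matrix.

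First I would observe that $M_k(D_{\mathbb{R}})$ is itself a finite-dimensional semisimple $\mathbb{R}$-algebra equipped with the positive involution $[a_{ij}] \mapsto [a_{ji}^*]$, so by Corollary \ref{co:involution} the map $x \mapsto \Tr_{M_k(D_{\mathbb{R}})}(x^* x)$ is a genuine positive definite quadratic form on $M_k(D_{\mathbb{R}})$. The relation $\kappa^*\kappa = 1_{M_k(D_{\mathbb{R}})}$ then forces $\Tr_{M_k(D_{\mathbb{R}})}(\kappa^*\kappa) = \Tr_{M_k(D_{\mathbb{R}})}(1_{M_k(D_{\mathbb{R}})})$, a fixed positive constant, so every $\kappa \in K$ lies on a sphere of this Euclidean norm and $K$ is bounded. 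It is also closed, being cut out by continuous polynomial equations; in particular, the extra condition $\N(\kappa) = 1$ in the definition of $K$ is nearly redundant, since applying $\N$ to $\kappa^*\kappa = 1$ together with $\N(\kappa^*) = \N(\kappa)$ (from the fact that the involution preserves left-multiplication traces by Lemma \ref{le:enjoyable} and extends componentwise to the Artin--Wedderburn decomposition) leaves at worst a sign ambiguity. Closed and bounded gives compact.

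For part (ii), I would show $K \cap B \subseteq K \cap A_0$; the reverse inclusion is immediate since $A_0 \subseteq B$. Take $\kappa \in K \cap B$ and write $\kappa = an$ with $a \in A_0$ diagonal and invertible and $n \in N$ upper unipotent. Then $\kappa$ is upper triangular with invertible diagonal entries, so its inverse $\kappa^{-1} = n^{-1} a^{-1}$ is also upper triangular. On the other hand $\kappa \in K$ forces $\kappa^{-1} = \kappa^*$, and by the definition of the involution on $M_k(D_{\mathbb{R}})$ the matrix $\kappa^*$ is lower triangular. A matrix that is both upper and lower triangular is diagonal, so $\kappa$ is diagonal; feeding this back into $\kappa = an$ with $a$ diagonal invertible shows that the strictly upper part of $n$ must vanish, hence $n = 1_{M_k(D_{\mathbb{R}})}$ and $\kappa \in A_0$. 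Finally, $K \cap A_0$ is closed in the compact set $K$, hence compact.

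The main obstacle I anticipate is conceptual rather than computational: one has to be careful that the involution, inherited entrywise from $D_{\mathbb{R}}$, really interchanges upper and lower triangularity, and that invertible upper triangular matrices in $M_k(D_{\mathbb{R}})$ remain upper triangular under inversion. Both facts reduce to elementary statements in the algebra $D_{\mathbb{R}}$ and do not require $D_{\mathbb{R}}$ to be a division ring; once they are in hand, the proof is essentially a one-line argument. The trickiest bookkeeping item is verifying that $\N$ is involution-invariant in the Artin--Wedderburn decomposition, but this is needed only to justify a remark and is not strictly required for either conclusion.
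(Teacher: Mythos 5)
Your proof is correct. Part (i) is essentially the paper's argument: both of you observe that $\kappa^*\kappa = 1$ pins down $\Tr_{M_k(D_{\mathbb{R}})}(\kappa^*\kappa)$ to the fixed constant $\dim_{\mathbb{R}} M_k(D_{\mathbb{R}})$, so $K$ sits inside a sphere of the positive definite form and is closed-and-bounded. For part (ii), however, you take a genuinely different and cleaner route. The paper writes out the block matrix equation $\kappa^*\kappa = 1$, extracts $\kappa_{11}^*\kappa_{11} = 1_A$ from the top-left corner, uses the resulting invertibility of $\kappa_{11}$ to kill the rest of the first row, and then runs a $(k-1)\times(k-1)$ induction. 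You instead exploit two structural facts: that the inverse of an upper triangular matrix with invertible diagonal stays upper triangular (via the geometric series on the strictly upper-triangular nilpotent part, valid over any ring), and that the entrywise involution $[a_{ij}] \mapsto [a_{ji}^*]$ swaps upper and lower triangularity; since $\kappa \in K$ gives $\kappa^{-1} = \kappa^*$, the element is simultaneously upper and lower triangular, hence diagonal. This dispenses with the inductive bookkeeping and makes visible the real reason the intersection collapses to $A_0$ — the involution is triangle-reversing — whereas the paper's row-by-row argument is more explicit but obscures this symmetry. Your final step, that $K \cap A_0$ is closed in the compact $K$, is also a touch more economical than the paper's identification of $K \cap A_0$ with a product of unitary groups of $D_{\mathbb{R}}$. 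The side remark about $\N(\kappa^*) = \N(\kappa)$ is correct (check it blockwise in the Artin--Wedderburn decomposition) and, as you say, is not actually needed since $\N(\kappa) = 1$ is already built into $G$.
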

\begin{proof}
  \begin{enumerate}
    \item $K$ is at most an index-2 subgroup of $\{ a \in M_k(D_{\mathbb{R}}), a^{*} a = 1_{M_k(\mathbb{R})}\}$. The compactness of this group follows from the following more general claim.

      Let $A$ be a semisimple algebra with a positive involution $^{*}$, then the group $\{ a  \in A \ | \ a^{*} a = 1_{A}\}$ must be a compact group in the induced topology from $A$. Indeed, it is a closed group that lives inside the compact ball $\{ a \in A \ |  \ \Tr_{A}(a^{*}a) \le [A:\mathbb{R}] \}$.
    \item We see that if $\kappa \in K \cap A_0N$ then $an$ has to be an upper triangular matrix such that $\kappa^{*} \kappa = 1_{G} $. As a matrix, what this means is that 
  \begin{align}
    \begin{bmatrix} 
      1_{D_{\mathbb{R}}}      &   &  &  \\
      & 1_{D_{\mathbb{R}}} &  &  \\
        &                & \ddots &  \\
      &        &  & 1_{D_{\mathbb{R}}}    \end{bmatrix}
    = & 
    \begin{bmatrix} 
      \kappa_{11}^{*}  &        &  &        \\
      \kappa_{12}^{*}  & \kappa_{22}^{*} &  &        \\
            & \vdots               & \ddots &  \\
	    \kappa_{1k}^{*}  & \kappa_{2k}^{*} &  & \kappa_{kk}^{*}      
    \end{bmatrix}
    \begin{bmatrix} 
      \kappa_{11}  & \kappa_{12} & \dots & \kappa_{1k} \\
      & \kappa_{22}      &  & \kappa_{2k} \\
        &                & \ddots &  \\
	&        &  & \kappa_{kk}      
      \end{bmatrix}\\
     = &
    \begin{bmatrix} 
      \kappa_{11}^{*} \kappa_{11}  & \kappa_{11}\kappa_{12} & \dots & \kappa_{11} \kappa_{1k} \\
      \kappa_{12}^{*} \kappa_{11}        & \kappa_{12}^{*} \kappa_{12} + \kappa_{22}^{*}\kappa_{22}      &  &  \kappa_{12}^{*} \kappa_{1k} +  \kappa_{22}\kappa_{2k} \\
     \vdots &                & \ddots &  \\
     \kappa_{1k}^{*} \kappa_{11}       &        &  & \sum_{i=1}^{k} \kappa_{ki}^{*} \kappa_{ik}
   \end{bmatrix}.\\
  \end{align}

  We will show that $\kappa_{ij}= 0$ for $i<j$. When $i=1$, we see that $ \kappa^{*}_{11}\kappa_{11} = 1_{A}$, so $\kappa_{11}$ is invertible and therefore from the first row above, we see that $\kappa_{11} \kappa_{1j} = 0 \Rightarrow \kappa_{1j} = 0 $ for $j > 1$. This makes the entire first row of $\kappa$, except $\kappa_{11}$ to be $0$. This reduces the case to a $(k-1) \times (k-1)$ upper triangular matrix satisying the same matrix equality as above. Hence, we can show the rest of the entries are $0$ by induction.

  Now $K\cap A_{0} \simeq \{ a \in D_{\mathbb{R}} \ | \ a^{*} a = 1_{D_{\mathbb{R}}} \}^{ \oplus k}$ as a topological group. From the discussion of the previous part, it is compact.
  \end{enumerate}
\end{proof}

One last piece of the puzzle describes something special about Haar measure on $G$.
\begin{proposition}
  The group $G$ is unimodular. That is, a left-invariant Haar measure is also right-invariant.
\end{proposition}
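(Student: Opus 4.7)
The plan is to prove unimodularity by showing that the modular function $\Delta:G\to\mathbb{R}^{>0}$, defined via the adjoint representation as $\Delta(g)=|\det\Ad(g)|_{\mathfrak{g}}|$, is identically $1$; this is equivalent to unimodularity of $G$. Since $G = \{g \in GL_k(D_{\mathbb{R}}) : \N(g) = 1\}$ is cut out by a polynomial equation, its Lie algebra is $\mathfrak{g} = \{x \in M_k(D_{\mathbb{R}}) : \Tr(x) = 0\}$, where $\Tr$ denotes the trace of the faithful representation $\pi$ of Remark~\ref{re:integral} (that is, the derivative of $\log|\N|$ at the identity). Since $\Tr(1_{M_k(D_{\mathbb{R}})}) = kd > 0$, one has the real direct-sum decomposition $M_k(D_{\mathbb{R}}) = \mathfrak{g} \oplus \mathbb{R}\cdot 1_{M_k(D_{\mathbb{R}})}$. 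The adjoint action $\Ad(g)(x) = gxg^{-1}$ extends naturally to all of $M_k(D_{\mathbb{R}})$, preserves both summands and acts trivially on $\mathbb{R}\cdot 1$, so $\det\Ad(g)|_{\mathfrak{g}} = \det\Ad(g)|_{M_k(D_{\mathbb{R}})}$.

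Next I would factor $\Ad(g) = L_g\circ R_{g^{-1}}$ into left and right multiplication and compute each determinant separately. Decomposing $M_k(D_{\mathbb{R}})$ into its $k$ columns — each a copy of the left module $D_{\mathbb{R}}^k$ on which $M_k(D_{\mathbb{R}})$ acts via $\pi$ — gives $\det L_g = \N(g)^k$. For the right-multiplication factor, I would exploit the positive involution $\sigma = (\ )^*$ from Corollary~\ref{co:involution}: the identity $(a^*x)^* = x^*a$ reads as $\sigma\circ L_{a^*} = R_a\circ \sigma$, so conjugating by $\sigma$ yields $\det R_a = \det L_{a^*} = \N(a^*)^k$. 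Combining the two factors,
\[
  \det\Ad(g)|_{\mathfrak{g}} = \det L_g \cdot \det R_{g^{-1}} = \N(g)^k\cdot \N(g^*)^{-k}.
\]

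For $g\in G$ the first factor equals $1$ by definition, so the remaining task is to establish $|\N(a^*)| = |\N(a)|$ for every $a \in GL_k(D_{\mathbb{R}})$. Since $a^*a$ is symmetric positive definite by Lemma~\ref{le:nondiv}, $\N(a^*)\N(a) = \N(a^*a) > 0$, so $\N(a^*)$ and $\N(a)$ carry the same sign; equality of magnitudes then follows from unravelling the Wedderburn decomposition $D_{\mathbb{R}} \cong \prod_i M_{m_i}(k_i)$ with $k_i \in \{\mathbb{R},\mathbb{C},\mathbb{H}\}$. On each simple factor the abstract positive involution restricts to a real form of conjugate-transpose, and $|\N|$ becomes a positive power of the absolute value of the reduced norm, which is manifestly preserved by conjugate-transpose. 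This Wedderburn-case verification is the main obstacle: the identity is elementary in each case but requires one to chase how the abstractly postulated involution acts concretely on each simple summand of $D_{\mathbb{R}}$. Once it is in hand, $|\det\Ad(g)|_{\mathfrak{g}}| = |\N(g)|^k\cdot|\N(g^*)|^{-k} = 1$ for every $g\in G$, so $\Delta\equiv 1$ and $G$ is unimodular as claimed.
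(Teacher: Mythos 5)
Your plan is a genuinely different route from the paper's. The paper first shows that $GL_k(D_{\mathbb{R}})$ is unimodular by exhibiting $dg = |\N(a)|^{-1}\,da$ (restriction of Lebesgue measure) as an explicitly bi-invariant measure, using the fact that for $a\in M_k(D_{\mathbb{R}})$ the determinants of left- and right-multiplication on the algebra coincide; it then passes to $G=SL_k(D_{\mathbb{R}})$ by integrating over the cone $(0,1]\cdot U$. You instead compute the modular function $\Delta(g)=|\det\Ad(g)|_{\mathfrak{g}}|$, extend $\Ad$ to all of $M_k(D_{\mathbb{R}})$ via the one-dimensional complement $\mathbb{R}\cdot 1$, and factor $\Ad(g)=L_g\circ R_{g^{-1}}$. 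Your observation that the involution $\sigma$ conjugates $R_a$ to $L_{a^*}$, hence $\det R_a=\det L_{a^*}$, is a nice touch and is also the mechanism behind Lemma~\ref{le:enjoyable}.

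However, the argument as written has a gap, which you have correctly flagged yourself: after the involution trick you still need $|\N(a^*)|=|\N(a)|$, and that step is only sketched. Notice that this remaining claim is not independent of the paper's key input: via your own identity $\det L_{a^*}=\det R_a$, the statement $|\N(a^*)|=|\N(a)|$ is \emph{equivalent} to $|\det L_a|=|\det R_a|$, which is precisely what the paper invokes. So the detour through the involution has not actually reduced the problem to something weaker — it has circled back to the same fact, just wearing a different hat. Two ways to close the gap cleanly: (i)~cite the general fact that for a finite-dimensional semisimple $\mathbb{R}$-algebra one has $\det L_a=\det R_a$ (base-change to $\mathbb{C}$, decompose by Artin--Wedderburn into matrix factors $M_n(\mathbb{C})$, where both sides equal $(\det a)^n$), in which case you don't even need the involution: $\det\Ad(g)=\det L_g/\det R_g=1$ outright; or (ii)~carry out the Wedderburn case analysis you propose, but then you must actually classify how the \emph{abstract} positive involution restricts to each simple summand $M_{m_i}(k_i)$ — by the structure theory of positive involutions it is of the form $a\mapsto c^{-1}\bar{a}^{T}c$ with $c$ positive definite, which preserves $|\N|$, but this classification is itself a theorem and should be either cited or proved. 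As it stands the proposal correctly identifies the shape of the argument and where the weight lies, but the load-bearing identity is not yet established.
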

\begin{proof}
  First, observe that the group $GL_k(D_{\mathbb{R}})$ is unimodular. 
  
  $GL_k(D_{\mathbb{R}})$ is an open subset of $M_{k}(D_{\mathbb{R}})$. This is because for any $u \in GL_k(D_{\mathbb{R}})$ and $u' \in M_k(D_{\mathbb{R}})$, $u+tu' = u(1+tu^{-1}u')$ is invertible if $t\in \mathbb{R}$ satisfies $ |t|^{2}\Tr( (u^{-1} u')^{*} (u^{-1}u') ) < 1$. Hence, any Lebesgue measure $da$ of $M_{k}(D_{\mathbb{R}})$ can be restricted to get a measure $da$ on $GL_k(D_{\mathbb{R}})$. Now set $dg = | \N(a)|^{-1}da$. This measure is in fact both left and right invariant. Indeed, this is because the deterninant of the left-multiplication of $a \in M_k(D_{\mathbb{R}})$ is the same as that of the right-multiplication, both being equal to $| \N(a)|$.

  Now on $G=SL_k(D_{\mathbb{R}})$, we can induce a Haar measure as follows. For any open set $U \subseteq G$, consider the set $(0,1]U = \bigcup_{t \in (0,1]} tU \subseteq GL_k(D_{\mathbb{R}})$ and define $\mu_{G}(U) = \int_{(0,1]U} dg $. This defines a Haar measure on $G$ that is both left-invariant and right-invariant.
\end{proof}

All this machinery can now be used to show Proposition \ref{pr:Gisunimodular}. Here it goes.

\begin{proof} (of Proposition \ref{pr:Gisunimodular}).

    We use the following classically known lemma. See \cite{knapp2013}, for a proof.
    \begin{lemma}
      Let $G'$ be a Lie group. Let $S,T$ be closed subgroups such that $S \cap T $ is compact and the multiplication $S \times T \rightarrow G'$ is an open map whose image is surjective (except possibly a measure $0$ subset of $G'$). Let $\Delta_{T}$ and $\Delta_{G'}$ denote the modular functions of $T$ and $G'$. Then the following is a Haar measure on $G'$.
  \begin{align}
    C_{c}(G') \rightarrow &\  \mathbb{R} \\ 
    f \mapsto &  \  \int_{S \times T}f(s t)  \frac{\Delta_{T}(t) }{\Delta_{G'}(t) }  ds dt.
  \end{align}
    \end{lemma}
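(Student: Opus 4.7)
The strategy is to show that the prescribed functional $\mu(f):=\int_{S\times T}f(st)\,\Delta_T(t)/\Delta_{G'}(t)\,ds\,dt$ is a non-zero positive left-$G'$-invariant Radon measure on $G'$; the uniqueness of Haar measure up to positive scalar then identifies $\mu$ with a Haar measure. The plan proceeds in three steps: well-definedness, the easy left-$S$-invariance, and the delicate left-$T$-invariance.

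For well-definedness, the key observation is that $\Delta_T$ and $\Delta_{G'}$ are continuous homomorphisms into $\mathbb{R}_{>0}$, so both are identically $1$ on the compact subgroup $S\cap T$. This makes the integrand $(s,t)\mapsto f(st)\,\Delta_T(t)/\Delta_{G'}(t)$ invariant under the proper free $(S\cap T)$-action $h\cdot(s,t)=(sh,h^{-1}t)$ on $S\times T$, whose orbits are exactly the fibres of the multiplication map $\phi\colon S\times T\to G'$. After choosing a normalization of Haar measure on $S\cap T$, the product measure $ds\,dt$ descends to the quotient and is transported via the injective open map $\bar\phi$ to a Radon measure on the open subset $\phi(S\times T)\subseteq G'$. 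By hypothesis the complement is a null set, so we obtain a bona fide Radon measure $\mu$ on $G'$, which is non-zero because $\phi$ is open and the weight $\Delta_T/\Delta_{G'}$ is strictly positive.

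Left-$S$-invariance is immediate: for $s_0\in S$, the substitution $s\mapsto s_0 s$ in the $ds$-integral preserves the left Haar measure and does not affect the $T$-factor, so $\mu(L_{s_0^{-1}}f)=\mu(f)$. Because $\phi(S\times T)=ST$ is open and cofull hence dense in $G'$, and $g_0\mapsto\mu(L_{g_0^{-1}}f)$ is continuous, verifying full left-$G'$-invariance reduces to the case $g_0\in T$.

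This left-$T$-invariance is the main obstacle, since for $t_0\in T$ the product $t_0 s$ does not lie in $S$ and cannot be absorbed into the $ds$-integral. The plan is a pointwise decomposition: on the full-measure locus where $t_0 s\in ST$, pick a measurable splitting $t_0 s = s''(s)\,t''(s)$ with $s''(s)\in S$ and $t''(s)\in T$. Substituting $t\mapsto t''(s)^{-1}t$ in the inner $T$-integral and exploiting left-invariance of $dt$ together with the multiplicativity of $\Delta_T,\Delta_{G'}$ yields
\[
\int_T f(t_0 st)\,\frac{\Delta_T(t)}{\Delta_{G'}(t)}\,dt \;=\; \frac{\Delta_{G'}(t''(s))}{\Delta_T(t''(s))}\int_T f(s''(s)t)\,\frac{\Delta_T(t)}{\Delta_{G'}(t)}\,dt,
\]
so the claim reduces to the outer change-of-variable identity
\[
\int_S \frac{\Delta_{G'}(t''(s))}{\Delta_T(t''(s))}\,g(s''(s))\,ds \;=\; \int_S g(s)\,ds
\]
for the relevant function $g$ on $S$. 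This is precisely Weil's quotient-integration formula for the coset space $S\backslash G'$ under the identification $S\backslash ST\simeq T/(S\cap T)$: the factor $\Delta_T/\Delta_{G'}$ is the Radon--Nikodym weight that promotes a $G'$-quasi-invariant measure on $S\backslash G'$ to a genuinely $G'$-invariant one. Carrying out the verification rigorously requires careful choice of measurable sections of $\phi$ and bookkeeping with modular characters, which is the technical heart of the lemma and the reason the author defers to Knapp's monograph for the full argument.
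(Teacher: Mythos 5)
The paper does not give its own proof of this lemma; it cites Knapp's monograph \cite{knapp2013} and moves on. So there is no in-paper argument to compare against, and the question is only whether your sketch is sound. It is, in outline: the $(S\cap T)$-invariance of the integrand (using that modular characters are trivial on compact subgroups) correctly handles well-definedness, the change of variables in $ds$ correctly gives left-$S$-invariance, and your density argument correctly reduces full left-$G'$-invariance to left-$S$- and left-$T$-invariance. You also compute the inner $T$-substitution correctly and arrive at the right residual identity on $S$.

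That said, the route you chose is noticeably harder than necessary, and this is where your sketch stops being a proof. Attacking left-$T$-invariance directly forces you to introduce the measurable section $s\mapsto (s''(s),t''(s))$ of the multiplication map and then to justify the section-dependent identity $\int_S \tfrac{\Delta_{G'}(t''(s))}{\Delta_T(t''(s))}\, g(s''(s))\, ds = \int_S g(s)\, ds$, which you label ``Weil's formula'' but do not actually reduce to a cited statement; the phrase about ``promoting a quasi-invariant measure on $S\backslash G'$'' is loose, since the measure under discussion lives on $G'$, and making the identification $G'/T\simeq S/(S\cap T)$ and the cocycle bookkeeping precise is exactly the nontrivial content. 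A cleaner route avoids the section entirely: instead of left-$T$-invariance, check right-$T$-quasi-invariance. For $t_0\in T$, the substitution $t\mapsto t t_0$ in the inner integral is a single global change of variables giving $\mu\bigl(f(\cdot\,t_0)\bigr)=\Delta_{G'}(t_0)\,\mu(f)$ in one line (using the convention of the lemma, under which a left Haar measure on $G'$ satisfies exactly this transformation law). Combined with left-$S$-invariance this shows $\mu$ is relatively invariant under the transitive action $(s,t)\cdot g = s g t^{-1}$ of $S\times T$ on $ST\subseteq G'$, with the same multiplier as a left Haar measure and compact stabilizer $S\cap T$; uniqueness of relatively invariant measures on a homogeneous space with a given multiplier then identifies $\mu$ with Haar. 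Both arguments ultimately rest on the same uniqueness theorem, but the right-$T$ computation removes the measurable section and makes the appeal to the standard theorem unambiguous, which is the gap your left-$T$ route leaves open.
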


    We will use this lemma twice. First with $(G',S,T) = (G,K,B)$, which fits due to Corollary \ref{co:kbopenmap} and Proposition \ref{pr:kcompact}, and then $(G',S,T) = (B,A_{0},N)$ which fits because $A_{0} \cap N = \{ 1_{G}\}$ and $(a,n) \mapsto an$ is an open map. Then, we get that the following is a Haar integral for $f \in C_{c} (G)$.
    \begin{align}
      \int_{K \times B}^{} f(\kappa b) \frac{\Delta_{B}(b)}{\Delta_{G}(b)} d\kappa db & =  \int_{B}^{} \left( \int_{K}^{}f(\kappa b) \frac{\Delta_{B}(b)}{\Delta_{G}(b)} d\kappa \right) db \\
      & =  \int_{N}^{ } \int_{A_0}^{} \left( \int_{K}^{} f(\kappa an) \frac{\Delta_B{(an)}}{ \Delta_{G}(an)}  d\kappa\right) \frac{\Delta_{N}(n)}{\Delta_{B}(n)} da dn\\
      & =  \int_{N}^{ } \int_{A_0}^{}  \int_{K}^{} f(\kappa an) {\Delta_B{(a)}}  d\kappa  da dn.
    \end{align}
    Here, for the last equality we have used that that $G$ and $N$ are unimodular $\Delta_{G},\Delta_{N}$ are trivial. $G$ is unimodular by Lemma \ref{pr:Gisunimodular} and $N$ is unimodular because it is nilpotent\footnote{Alternatively, one can check this through the identity $\Delta_{N}(n) = | \det \Ad(n)|$ }. Finally, we use the following identity that is classically known and also given in \cite{knapp2013}.
    \begin{align}
      \Delta_{B}(a)  = |\det \Ad_{B} ( b)|,
    \end{align}
    where $\Ad_{B} : B \rightarrow GL(T_{e}B)$ is the adjoint representation of $B$. Identify
    \begin{align}
      T_{e} B = \{  m \in M_{k}(D_{\mathbb{R}}) \ | \ \Tr(m) = 0 , m \text{ is upper triangular}\}.
    \end{align}
    Then, clearly $(ama^{-1})_{ij} = a_{ii} n_{ij} a_{jj}^{-1}$. Since determinant of right multiplication and left multiplication on $D_{\mathbb{R}}$ is the same, we get 
    \begin{align}
      \Delta_{B}(a) = \prod_{i < j}^{} \left|\frac{N(a_{ii})}{N(a_{jj})}\right|.
    \end{align}
  \end{proof}

\end{appendices}

 \bibliographystyle{unsrt}
  \bibliography{auth}

\end{document}